\newcommand{\pd}{\partial}
\newcommand{\vf}{\varphi}
\newcommand{\m}{\mathbf{m}}
\newcommand{\n}{\mathbf{n}}
\newcommand{\x}{\mathbf{x}}
\renewcommand{\r}{\mathbf{r}}
\newcommand{\mn}{{\mathbf{m},\mathbf{n}}}
\newcommand{\R}{\mathbb{R}}
\newcommand{\Z}{\mathbb{Z}}
\newcommand{\<}{\langle}
\renewcommand{\>}{\rangle}
\newcommand{\eps}{\epsilon}
\newtheorem{theorem}{Theorem}
\newtheorem{lemma}{Lemma}
\newtheorem{corollary}[theorem]{Corollary}
\title{Scattering in flatland: \\ Efficient representations via wave atoms}
\author{Laurent Demanet$^{\dagger}$ and Lexing Ying$^{\ddagger}$\\
  \vspace{-.1cm}\\
  $\dagger$ Department of Mathematics, Stanford University, Stanford CA94305\\
  $\ddagger$ Department of Mathematics, University of Texas at Austin, Austin, TX 78712 }
\date{May 2008}		
\begin{document}

\maketitle

\begin{abstract}
This paper presents a numerical compression strategy for the boundary
integral equation of acoustic scattering in two dimensions. These
equations have oscillatory kernels that we represent in a basis of
wave atoms, and compress by thresholding the small coefficients to
zero. 

\begin{figure}[h!]
  \begin{center}
    \includegraphics[height=1.8in]{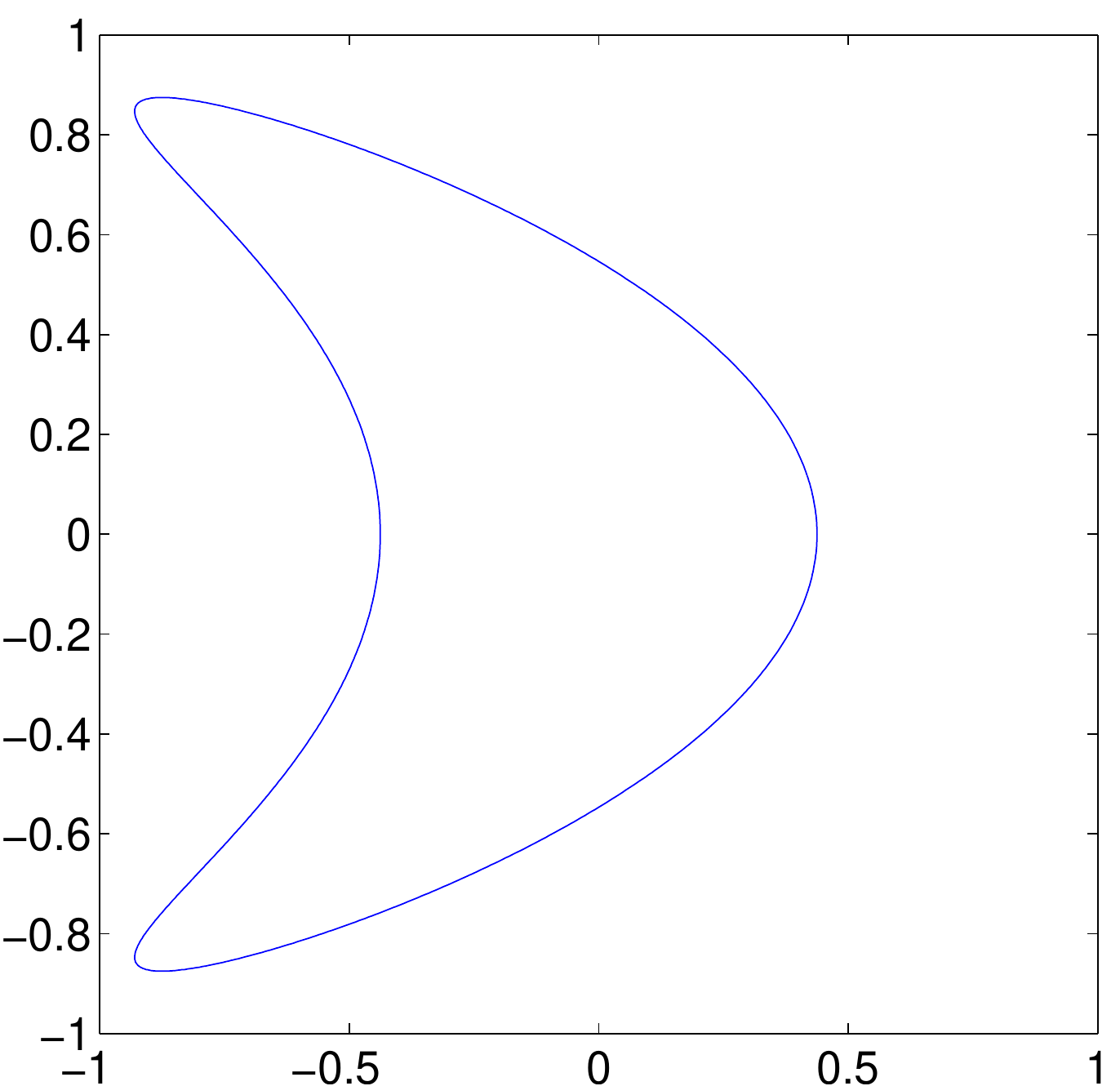}
    \includegraphics[height=1.8in]{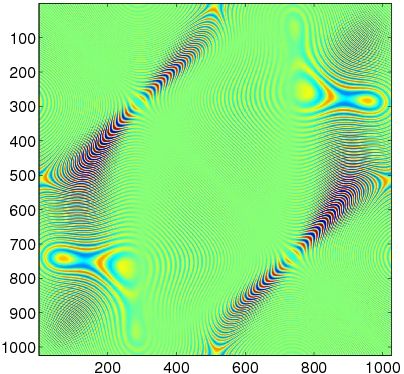}
    \includegraphics[height=1.8in]{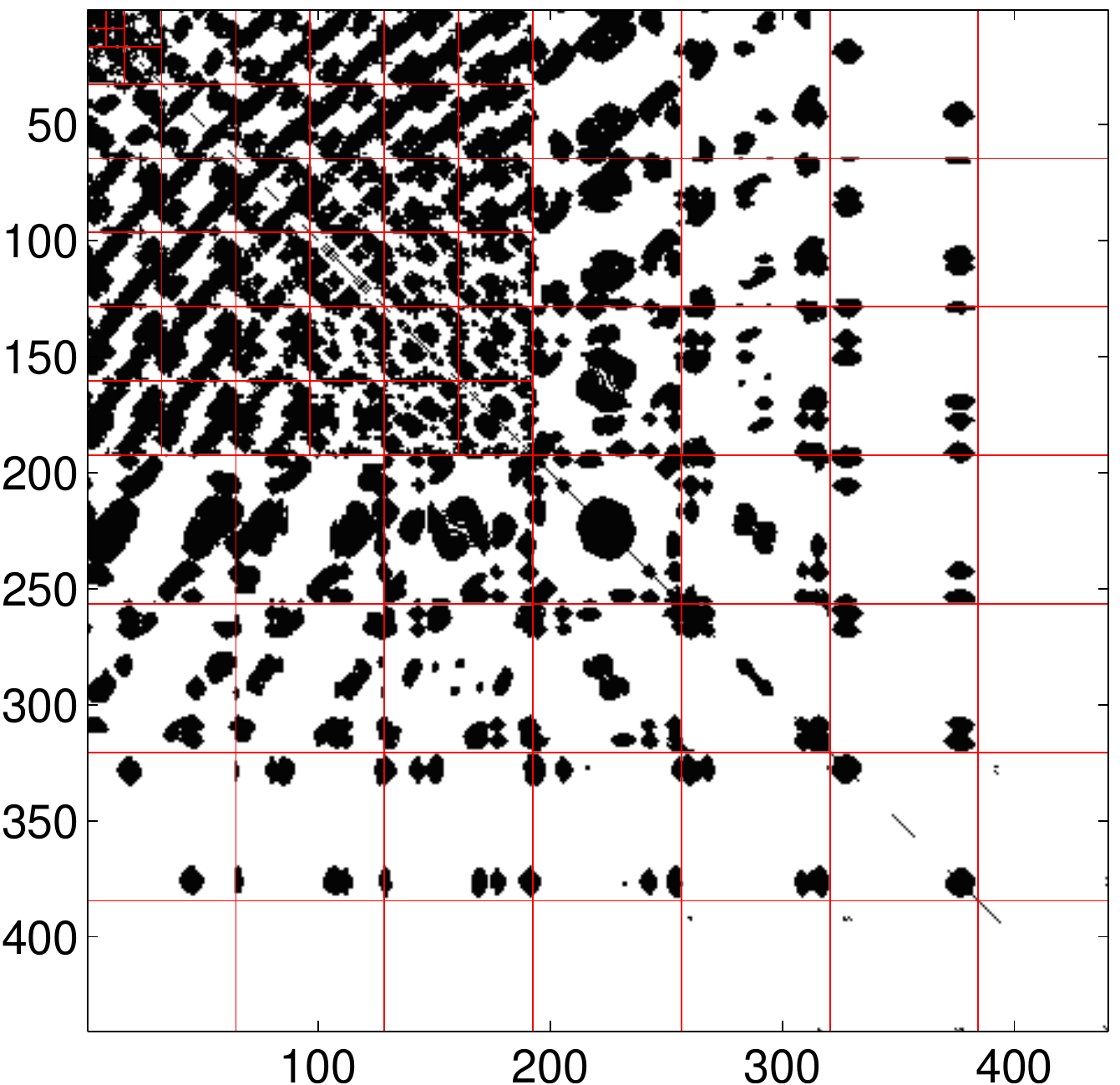}
  \end{center}
\end{figure}

\vspace{-.5cm}

\textbf{Left}: a kite-shaped scatterer. \textbf{Middle}: a depiction of the kernel
of the double-layer potential for this scatterer, sampled as a
1024x1024 matrix. \textbf{Right}: a zoomed-in view of the sparsity pattern of
the ``nonstandard wave atom matrix'', representing this kernel
accurately using only 60,000 matrix elements.

This phenomenon was perhaps first observed in 1993 by Bradie, Coifman,
and Grossman, in the context of local Fourier bases \cite{BCG}.  Their
results have since then been extended in various ways. The purpose of
this paper is to bridge a theoretical gap and prove that a well-chosen
fixed expansion, the nonstandard wave atom form, provides a
compression of the acoustic single and double layer potentials with
wave number $k$ as $O(k)$-by-$O(k)$ matrices with $O(k^{1+1/\infty})$
nonnegligible entries, with a constant that depends on the relative
$\ell_2$ accuracy $\eps$ in an acceptable way. The argument assumes
smooth, separated, and not necessarily convex scatterers in two
dimensions.  The essential features of wave atoms that enable to write
this result as a theorem is a sharp time-frequency localization that
wavelet packets do not obey, and a parabolic scaling wavelength $\sim$
(essential diameter)${}^2$.  Numerical experiments support the
estimate and show that this wave atom representation may be of
interest for applications where the same scattering problem needs to
be solved for many boundary conditions, for example, the computation
of radar cross sections.

\end{abstract}

\bigskip

{\bf Acknowledgements.}
The first author is partially supported by an NSF grant. The second author is
partially supported by an NSF grant, a Sloan Research
Fellowship, and a startup grant from the University of Texas at Austin.

\newpage


\section{Introduction}


This paper is concerned with the sparse representation of the boundary
integral operator of two-dimensional scattering problems. Let $D$ be a
bounded soft scatterer in $\R^2$ with a smooth boundary and
$u^{inc}(x)$ be the incoming wave field. The scattered field $u(x)$
satisfies the two-dimensional exterior Dirichlet problem of the
Helmholtz equation:
\[
-\Delta u(x) - k^2 u(x) = 0 \quad\mbox{in}\; \R^d \setminus \bar{D},
\]
\[
u(x) = - u^{inc}(x) \quad\mbox{for}\; x\in\pd D,
\]
\[
\lim_{|x|\rightarrow\infty} |x|^{1/2} \left( \left( \frac{x}{|x|},\nabla u(x) \right) - i k u(x) \right) = 0.
\]
One attractive method for dealing with this problem is to reformulate it using
a boundary integral equation for an unknown field $\phi(x)$ on $\pd
D$:
\begin{equation}
  \frac{1}{2} \phi(x) + \int_{\pd D} \left( \frac{\pd G(x,y)}{\pd n_y} - i \eta G(x,y) \right) \phi(y) d y
  = - u^{inc}(x),
  \label{eq:bie}
\end{equation}
where $n_y$ stands for the exterior normal direction of $\pd D$ at the
point $y$, and $\eta$ is a coupling constant of order $O(k)$. The
kernels $G(x,y)$ and $\frac{\pd G(x,y)}{\pd n_y}$ are, respectively,
the Green's function of the Helmholtz equation and its normal derivative, given by
\[
G(x,y) = \frac{i}{4} H_0^{(1)}(k \| x-y \| ),
\]
and
\[
\frac{\pd G}{\pd n_y}(x,y) = \frac{i k}{4} H_1^{(1)}(k \| x-y \|) \,
\frac{x-y}{\| x- y \|} \cdot n_y.
\]
Once $\phi(x)$ is obtained from solving the integral equation, the
scattered field $u(x)$ at $x\in \R^2\backslash \bar{D}$ can be
evaluated as
\[
u(x) = \int_{\pd D} \left( \frac{\pd G(x,y)}{\pd n_y} - i \eta G(x,y)
\right) \phi(y) d y.
\]
An important property of \eqref{eq:bie} from the computational point
of view is that its condition number is often quite small and, as a
result, one can advantageously solve \eqref{eq:bie} with an iterative algorithm
like GMRES. At each step of the iterative solver, we need to apply
the integral operator to a given function.  Since the integral
operator is dense, applying the operator directly is too expensive. In
this paper, we address this issue by efficiently representing the
operator as a sparse matrix in a system of wave atoms.

Local cosines or wavelet packets have already been proposed for this
task with great practical success, see Section \ref{sec:related} for
some references, but we believe that the following two reasons make a
case for \emph{wave atom frames}:
\begin{itemize}

\item The proposed construction is non-adaptive: wave atom frames of
  $L^2$ are not designed for a specific value of $k$, and no
  optimization algorithm is needed to find a provably good basis. To
  achieve this result, the essential property of wave atoms is a
  parabolic scaling that we discuss later.

\item The choice of numerical realization for wave atoms follows some
  of the experience garnered throughout the 1990s in the study of
  local Fourier bases and wavelet packets. In particular, wave atoms
  offer a clean multiscale structure in the sense that they avoid the
  ``frequency leakage'' associated with wavelet packets defined from
  filterbanks. These aspects are discussed in \cite{DY}.

\end{itemize}

Of course, any non-adaptive all-purpose numerical compression method
is likely to lag in performance behind an adaptive strategy that would
include at least the former in its scope; but this is no excuse for
discarding their study. Proper insight about architectures and
scalings is important for designing the solution around which, for
instance, a library of bases should be deployed for a best basis
search.

The main result of this paper says that the wave atom frame is in some
sense near-optimal for representing the integral operator in
\eqref{eq:bie} as a sparse matrix. Namely, full matrices would involve
$O(k^2)$ elements but we show that $O(k)$ matrix elements suffice to
represent $G$, and $O(k^{1+\delta})$ matrix elements suffice to
represent $\pd G / \pd n_y$ to a given accuracy $\epsilon$, for
arbitrarily small $\delta > 0$. We believe that these bounds would not
hold for wavelet packets, for instance, even if the best decomposition
tree is chosen. In particular, wavelets would obviously not be suited
for the job.

Once the sparse representation of \eqref{eq:bie} is constructed in the
wave atom frame, applying the operator requires only two wave atom
transforms and has complexity $O(N^{1+\delta})$ for a problem with $N$
unknowns ($N$ is proportional to $k$, $\delta$ is arbitrarily small.)
The implicit constant is in practice very small, and the wave atom
transform itself has complexity $O(N \log N)$.

Such a strategy is attractive when the scattering problem needs to be
solved several times with different incoming waves $u^{inc}(x)$. One
important example is the computation of bistatic cross sections, where
one needs to calculate the far field patterns of scattered fields for
all possible incoming plane waves.


\subsection{Wave atoms}

Frames of wave atoms were introduced in \cite{DY} on the basis that
they provide sparse representations of certain oscillatory patterns.
As alluded to earlier, they are a special kind of oriented wavelet
packets that do not suffer from the frequency leaking associated to
filterbanks, and which obey the important parabolic scaling relation

\begin{center}
wavelength $\sim$ (essential diameter)${}^2$.
\end{center}

Let us rehearse the construction of wave atoms, and refer the reader
to \cite{DY} for more details. In one dimension, wave atoms are an
orthonormal basis indexed by the triple of integers $\lambda \equiv
(j, m, n)$. The construction is in the frequency domain; our
convention for the Fourier transform is
\[
\hat{f}(\omega) = \int_{\R^n} e^{- i x \cdot \omega} f(x) \, dx, 
\qquad f(x) = \frac{1}{(2 \pi)^n} \int_{\R^n} e^{i x \cdot \omega} \hat{f}(\omega) \, d\omega.
\]

\begin{itemize}
\item First, $j \geq 0$ is a scale parameter that should be thought of
  as indexing dilations \emph{of a factor 4}; in other words, one
  should consider a first partition of the positive frequency axis
  into intervals of the form $[c_1 2^{2j}, c_2 2^{2j + 2}]$ (for some
  constants $c_1, c_2$ that will accommodate overlapping of basis
  functions). Choosing $j$ such that frequency $\omega$ is
  proportional to $2^{2j}$ is in contrast with wavelet theory, where
  $\omega \sim 2^j$ over the positive frequency support of a wavelet.
\item The parameter $m$ with $c_1 2^j \le m < c_2 2^{j+2}$ then indexes
  the further partitioning of each interval $[c_1 2^{2j}, c_2 2^{2j +
    2}]$ into $O(2^j)$ subintervals of size $O(2^{j})$. More
  precisely, wave atoms are centered in frequency near $\pm
  \omega_{\lambda}$ where
  \[
  \omega_{\lambda} \approx \pi 2^j m, \qquad c_1 2^j \le m < c_2 2^{j+2} 
  \]
  and are compactly supported in the union of two intervals of length
  $2 \pi \times 2^j$. The parabolic scaling is now apparent; the size
  of the support in frequency $(\sim 2^j)$ is proportional to the
  square root of the offset from the origin $(\sim 2^{2j})$.
\item The parameter $n \in \Z$ indexes translations. A wave atom is
  centered in space near
  \[
  x_\lambda = 2^{-j} n,
  \]
  and has essential support as narrow as the uncertainty principle
  allows, i.e., of length $O(2^{-j})$.
\end{itemize}

We define $\Omega$ to be the set of all admissible indices, i.e.,
\[
\Omega = \{ (j,m,n): j \ge 0, \, c_1 2^j \le m < c_2 2^{j+2}, \, n \in \Z  \}.
\]
Basis functions are then written
\begin{equation}\label{eq:wa1D}
  \vf_{\lambda}(x) = 2^{j/2} \vf_{(j,m)}(2^j x - n) \, e^{i 2^j m x},
  \quad \lambda \in \Omega
\end{equation}
where $\vf_{(j,m)}$ depends weakly on $j$ and $m$, and needs to be
chosen adequately to form an orthobasis. The underlying delicate
construction of the $\vf_{(j,m)}$ is due to Lars Villemoes \cite{Vil} and summarized in
\cite{DY}.

In two dimensions, wave atoms are individually, but not collectively,
formed as tensor products of the one-dimensional basis functions. The
construction is "multiresolution" in the sense that there is only one
dilation parameter; the indexing 5-uple of integers is $\mu \equiv (j,
\m, \n)$ where $\m = (m_1, m_2)$ and $\n = (n_1, n_2)$. More
precisely, at scale $j$, the valid values for $\m = (m_1,m_2)$ satisfy
$0\le m_1, m_2 < c_2 2^{j+2}$ and $ c_1 2^j \le \max(m_1,m_2)$.

Wave atoms come as an orthonormal basis in two dimensions, but can be
made fully directional---supported in a narrow cone in frequency with
apex at the origin \cite{AM}---at the expense of increasing the
redundancy to 2 or 4. The definition of such variants makes use of a
unitary recombination involving Hilbert-transformed basis functions as
in the definition of complex wavelet transforms, and is fully
explained in \cite{DY}.

None of the results of this paper would depend on the choice of
variant; and for convenience we use the frame of wave atoms with
redundancy four.  With $\x = (x_1, x_2)$, the only property of wave
atoms that we will need is the characterization
\begin{equation}\label{eq:vfjm1}
  \vf_\mu(\x) = 2^j \vf_{(j,\m)}(2^j x_1 - n_1, 2^j x_2 - n_2) e^{i 2^j \m \cdot \x},
\end{equation}
where $\vf_{(j,\m)}$ is a $C^\infty$ non-oscillatory bump that depends
on $j$ and $\m$, but in a non-essential manner, i.e.,
\begin{equation}\label{eq:vfjm2}
| \pd^\alpha_{\x} \vf_{(j,\m)}(\x) | \leq C_{\alpha, M}  (1+ \| \x \|)^{-M}, \qquad \forall M > 0,
\end{equation}
with $C_{\alpha, M}$ independent of $j$ and $\m$. In addition, each
$\vf_{(j,\m)}$ is simply the tensor product of two corresponding bumps
for the 1D transform.

Although they may not necessarily form an orthonormal basis, wave
atoms still form a tight frame in the sense that expanding a function
is an isometry from $L^2(\R^2)$ to $\ell_2(\mu)$, 
\[
\| f \|_2^2 = \sum_\mu |\< f, \vf_\mu \>|^2
\]
which is equivalent to
\begin{equation}\label{eq:tightframe}
f = \sum_\mu \< f, \vf_\mu \> \vf_\mu.
\end{equation}
The same properties hold in one dimension.

The closest analogue to a ``continuous wave atom transform'' was
introduced in the mathematical literature by C\'{o}rdoba and Fefferman
in \cite{CF}. Wave atoms can be compared to brushlets
\cite{Brushlets}, but Villemoes's construction uses ``local complex
exponentials'' instead of local cosines in frequency.

Discretized wave atoms are described in \cite{DY, mirror-extended};
they inherit the localization and tight-frame properties of their
continuous counterpart. In particular, their bandlimited character
confers an immediate control over the accuracy of computing inner
products via quadrature. They come with fast FFT-based $O(N \log N)$
algorithms for both the forward and adjoint transforms (see \cite{DY}
for details).



\subsection{Operator expansions}

As functions can be analyzed and synthesized using coefficients,
operators can also be expanded from matrix elements in a tight frame.
When the frame is chosen appropriately, the expansion of the
operator is approximately sparse: only a small percentage of the
expansion coefficients are nonnegligible while all the rest of the
coefficients are almost zero. Thresholding the small coefficients
below a certain accuracy level gives rise to a sparse representation
of the operator. Typically, this can be done in two ways.

\begin{itemize}
\item The \emph{standard form} of an operator $A$ in the wave atom frame
$\vf_{\lambda}$ is
\[
A = \sum_{\lambda\in\Omega} \sum_{\lambda'\in\Omega} \vf_\lambda A_{\lambda,\lambda'} \< \cdot, \vf_{\lambda'} \>,
\]
where $ A_{\lambda,\lambda'} = \< A \vf_{\lambda'}, \vf_\lambda \>.$
For each fixed $\lambda$, we define $S(\lambda)$ to be the set of all
$\lambda'$ such that the modulus of $A_{\lambda,\lambda'}$ is above a certain 
threshold. The sparse representation of $A$ then takes the form
\[
A \approx \sum_{\lambda} \vf_\lambda \sum_{\lambda' \in S(\lambda)}
A_{\lambda,\lambda'} \<\cdot,\vf_{\lambda'} \>.
\]
In practice, the sums in $\lambda$ and $\lambda'$ are also truncated in scale to account for the finite number of samples $N$ of the functions to which $A$ is applied. Since the wave atom transform
computes the coefficients in the tight frame $\vf_\lambda$ in $O(N\log
N)$ steps, the above equation naturally gives rise to an efficient
method of applying the operator $A$ to a given function $f$:
\begin{itemize}
\item Apply the forward wave atom transform to compute the
  coefficients $f_{\lambda'} := \<f,\vf_{\lambda'} \>$.
\item For each $\lambda$, compute $g_\lambda := \sum_{\lambda' \in
    S(\lambda)} A_{\lambda,\lambda'} f_{\lambda'}$. The number of
  operations of this step is dictated by the number of nonnegligible
  coefficients of the representation of $A$.
\item Apply the adjoint wave atom transform to $g_\lambda$ to
  synthesize $Af$, i.e., $Af \approx \sum_\lambda \vf_\lambda
  g_\lambda$.
\end{itemize}

\item The \emph{nonstandard form} of $A$ in the two dimensional frame
$\vf_\mu$ is the set of coefficients $A_\mu = \int_{\R^2} A(x_1,x_2)
\overline{\vf}_\mu(x_1,x_2) \, dx_1dx_2$, such that the distributional
kernel of $A$ is expanded as
\[
A(x_1,x_2) = \sum_{\mu} A_\mu \vf_\mu(x_1,x_2)
\]
For a fixed threshold value, we define $S$ to be the set of all $\mu$
such that $A_\mu$ is above the threshold in modulus.  The sparse
representation of $A$ is now $ A(x_1,x_2) \approx \sum_{\mu\in S}
A_\mu \vf_\mu(x_1,x_2) $. Applying $A$ to a given function $f$
efficiently using this expansion is more involved than the case of the
standard form. For a fixed index $\mu=(j,\m,\n)$ with $\m=(m_1,m_2)$
and $\n=(n_1,n_2)$, we define the two 1D wave atom indices:
\begin{equation}
  \lambda^\mu_1 = (j,m_1,n_1) \quad\mbox{and}\quad
  \lambda^\mu_2 = (j,m_2,n_2).
  \label{eq:newind}
\end{equation}
Since the two dimensional index $\m=(m_1,m_2)$ satisfies $0 \le m_1, m_2 <
c_2 2^{j+2}$ and $ c_1 2^j \le \max(m_1,m_2)$, the set of all possible
choices for $\lambda^\mu_1$ and $\lambda^\mu_2$ are
\begin{equation}
  0 \le m_1 < c_2 2^{j+2}, \quad\mbox{and}\quad
  0 \le m_2 < c_2 2^{j+2}.
\label{eq:allind}
\end{equation}

Some of the indices in \eqref{eq:allind} are not admissible, i.e.,
they are not part of the set of indices for the 1D wave atom
transform, since if $\mu = (j,m,n)$ corresponds to a 1D wave atom it
would need to satisfy $c_1 2^j \le m < c_2 2^{j+2}$. Non-admissible
indices correspond to Gabor-type wave forms that partition the
frequency domain uniformly and, hence, violate the parabolic scaling.
We use $\Omega^e$ to denote this extended index set
\[
\Omega^e = \{ (j,m,n): j\ge 0, 0 \le m < c_2 2^{j+2}, n \in \Z \}.
\]
The frame formed by these bases functions $\vf_\lambda$ with $\lambda
\in \Omega^e$ are called the {\em extended} wave atom frame. For a
given function $f$, the computation of all the coefficients $\<f,
\vf_\lambda\>$ with $\lambda \in \Omega^e$ can be done easily by
extending the existing forward wave atom transform to include the
extra $m$ indices ($0\le m < c_1 2^j$) in $\Omega^e$. This
\emph{forward extended wave atom transform} still has an $O(N\log N)$
complexity since the number of extra non-admissible indices $m$ at each scale $j$ is
a small fixed fraction of the number of existing $m$ in $\Omega$ at scale $j$. The adjoint transform
can be extended similarly and the resulting transform is called the
{\em adjoint extended wave atom transform}. We note that these new
transforms are not orthonormal any more since parts of the input
functions are analyzed redundantly.

Using the notation in \eqref{eq:newind} and the tensor-product
property of $\vf_\mu$, we have
\begin{equation}
  A(x_1,x_2) \approx \sum_{\mu\in S} A_\mu \vf_{\lambda^\mu_1}(x_1) \vf_{\lambda^\mu_2}(x_2).
  \label{eq:tensor}
\end{equation}
When $A$ is applied to a given function $f$, we have
\[
A f(x_1) 
\approx \sum_{\mu\in S} A_\mu \vf_{\lambda^\mu_1}(x_1) \left( \int \vf_{\lambda^\mu_2}(x_2) f(x_2) d x_2 \right) 
= \sum_{\lambda \in \Omega^e} \vf_{\lambda} 
\sum_{\mu\in S\;\mbox{s.t.}\;\lambda^\mu_1=\lambda} A_\mu \left( \int \vf_{\lambda^\mu_2}(x_2) f(x_2) d x_2 \right).
\]
Using the extended transforms, we can derive from the above equation a
fast algorithm for applying $A$ to $f$ using the nonstandard form:
\begin{itemize}
\item Apply the forward extended wave atom transform to compute the
  coefficients $f_{\lambda'} := \<f,\vf_{\lambda'} \>$ for all indices
  $\lambda'\in \Omega^e$.
\item For each $\lambda \in \Omega^e$, compute $g_\lambda :=
  \sum_{\mu\in S\;\mbox{s.t.}\;\lambda^\mu_1=\lambda} A_\mu
  f_{\lambda^\mu_2}$.  The number of operations in this step is
  proportional to the number of nonnegligible coefficients in the
  nonstandard expansion of the operator $A$.
\item Apply the inverse extended wave atom transform to synthesize
  $Af$ from $g_\lambda$, i.e., $Af \approx \sum_{\lambda\in\Omega^e}
  \vf_\lambda g_\lambda$.
\end{itemize}

We would like to point out that nonstandard expansions only exist for
two-dimensional frames that have a tensor product representation for
each basis function, since the decomposition in \eqref{eq:tensor} is
essential for the derivation. For example, there is no known
nonstandard form representation for an operator in the tight frame of
curvelets \cite{CD}.

\end{itemize}

In what follows we focus exclusively on the nonstandard form, because
of its relative simplicity over the standard form. This claim may seem
paradoxical in view of the preceding discussion, but the basis
functions $\vf_\mu(x_1,x_2)$ of the nonstandard form have a single
dilation parameter and hence a fixed isotropic aspect ratio, unlike
the tensor basis $\vf_\lambda(x_1) \vf_{\lambda'}(x_2)$ which come in
all shapes and can be very elongated when $j \ne j'$. The isotropy of
the envelope of $\vf_\mu$ makes some of the stationary-phase argument
in the sequel, simpler in our view. The kernel $A(x_1,x_2)$ does not
display the anisotropy of tensor wave atoms along the axes---if
anything, $A(x_1,x_2)$ is singular along the diagonal $x_1 = x_2$---so
choosing the nonstandard form also seems to us more natural from the
numerical viewpoint. We can however not exclude at this point that the
standard form may enjoy comparable sparsity properties as the
nonstandard form.

\subsection{Sparsity of the nonstandard wave atom matrix}

In this section, we formulate the main result on sparsity of the nonstandard wave atom matrix of the acoustic single and double-layer potentials, in two dimensions.

The scatterer is a union of closed, nonintersecting $C^\infty$ curves $\Omega = \bigcup_{\alpha=1}^n \Omega_\alpha$ embedded in $\R^2$. For each $\alpha$, assume that $\mathbf{x}(t): I_\alpha \mapsto \Omega_\alpha$ is a $C^\infty$ periodic parametrization of $\Omega_\alpha$, and take $I_\alpha = [0,1]$ for simplicity.

We assume the following mild \emph{geometric regularity} condition on the scatterer: there exists $D > 0$ such that
\begin{equation}\label{eq:geomreg}
\| \x(s) - \x(t) \| \geq D \, | e^{2\pi i s} - e^{2 \pi i t} |,
\end{equation}
essentially meaning that the curve $\Omega_\alpha$ defining the scatterer cannot  intersect itself. 
We write $d(s,t) \equiv | e^{2\pi i s} - e^{2 \pi i t} |$ for the Euclidean distance on the unit circle.

When $s \in I_\alpha$, $t \in I_\beta$ with $1 \leq \alpha, \beta \leq n$, let
\begin{equation}\label{eq:SLP}
G_0(s,t) = \frac{i}{4} H^{(1)}_0(k \| \mathbf{x}(s) - \mathbf{x}(t) \|) \, \|\dot{\mathbf{x}}(t)\|,
\end{equation}
and
\begin{equation}\label{eq:DLP}
G_1(s,t) = \frac{i k}{4} H_1^{(1)}(k \| \mathbf{x}(s)-\mathbf{x}(t) \|) \, \frac{\mathbf{x}(s) -\mathbf{x}(t)}{\| \mathbf{x}(s) - \mathbf{x}(t) \|} \cdot n_{\mathbf{x}(t)} \, \|\dot{\mathbf{x}}(t)\|.
\end{equation}

The nonstandard wave atom matrices of $G_0$ and $G_1$, restricted to a couple of intervals $I_\alpha \times I_\beta$, are
\[
K^0_\mu = \< G_0, \vf_\mu \>, \qquad K^1_\mu = \< G_1, \vf_\mu \>. 
\]
Our main result below concerns the existence of \emph{$\eps$-approximants} $\tilde{K}^0_\mu$ and $\tilde{K}^1_\mu$, corresponding to the restriction of $\mu$ to sets $\Lambda_0$ and $\Lambda_1$, i.e., with $a = 0,1$,

\[
\tilde{K}^a_{\mu} = \left\{ \begin{array}{ll}
    K^{a}_{\mu} & \mbox{if $\mu \in \Lambda_{a}$};\\
    0 & \mbox{otherwise,} \end{array} \right.
\]
and chosen by definition such that
\begin{equation}
  \| K^a - \tilde{K}^a \|_{\ell_2(\mu)} \leq \eps, \qquad a = 0,1.
  \label{eq:Karel}
\end{equation}
The $\ell_2$ norm of a nonstandard wave atom matrix is equivalent to a Hilbert-Schmidt norm for the corresponding operator, by the tight-frame property of wave atoms. This norm is of course stronger than the operator $L^2$-to-$L^2$ norm; and much stronger than the $\ell_\infty(\mu)$ norm used in \cite{BCG}.

In what follows the notation $A \lesssim B$ means $A \leq C \, B$ for some constant $C$ that depends only on the nonessential parameters. Similarly, the notation $A \lesssim \eps^{-1/\infty}$ means $A \leq C_M \eps^{-1/M}$ for all $M > 0$. The constants may change from line to line.

\bigskip

\begin{theorem}\label{teo:main}
Assume the scatterer is smooth and geometrically regular in the sense of (\ref{eq:geomreg}). In the notations just introduced, there exist sets $\Lambda_0$ and $\Lambda_1$ that define $\eps$-approximants of $K^0_\mu$ and $K^1_\mu$ respectively, and whose cardinality obeys
\begin{equation}\label{eq:cardinality0}
| \Lambda_0 | \leq C^0_M \,  \left[ \, k  \, \eps^{-1/M} + \left( \frac{1}{\eps} \right)^{2+1/M} \, \right],
\end{equation}
\begin{equation}\label{eq:cardinality1}
| \Lambda_1 | \leq C^1_M \,  \left[ \, k^{1+1/M}  \, \eps^{-1/M} + \left( \frac{k}{\eps} \right)^{2/3 + 1/M} \, \right],
\end{equation}
for all $M > 0$, and where $C^0_M$, $C^1_M$ depend only on $M$ and the geometry of the scatterers.

\end{theorem}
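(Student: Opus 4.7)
The plan is to analyze the wave atom coefficients $K^a_\mu$ via a stationary-phase study of the oscillatory integral that defines them, proceeding scale by scale. First, I would split each kernel as $G_a = G_a^{\mathrm{near}} + G_a^{\mathrm{far}}$ using a smooth cutoff in the argument $z = k\|\x(s)-\x(t)\|$, with $G_a^{\mathrm{near}}$ supported where $z \lesssim 1$ (a strip of Euclidean width $O(1/k)$ around the diagonal) and $G_a^{\mathrm{far}}$ living where $z \gtrsim 1$. On the far region, the Hankel asymptotic $H^{(1)}_\nu(z) = e^{iz}P_\nu(z)$ with $|P_\nu^{(\ell)}(z)|\lesssim z^{-1/2-\ell}$ yields a clean oscillatory representation $G_a^{\mathrm{far}}(s,t) = \sigma_a(s,t)\,e^{ik\|\x(s)-\x(t)\|}$, with a $C^\infty$ amplitude $\sigma_a$ that inherits an extra factor $k$ when $a=1$. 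The geometric regularity (\ref{eq:geomreg}) is precisely what keeps derivatives of $\|\x(s)-\x(t)\|$ bounded and the phase non-degenerate away from $s=t$.

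Next, for each $\mu=(j,\m,\n)$, insert (\ref{eq:vfjm1}) into $K^{a,\mathrm{far}}_\mu = \langle G_a^{\mathrm{far}}, \vf_\mu\rangle$ and rescale via $u=2^js-n_1$, $v=2^jt-n_2$ to obtain
\[
K^{a,\mathrm{far}}_\mu = 2^{-j}\int \overline{\vf_{(j,\m)}(u,v)}\,\sigma_a(s,t)\,e^{i\Psi(u,v)}\,du\,dv, \quad s = \tfrac{u+n_1}{2^j},\; t=\tfrac{v+n_2}{2^j},
\]
with combined phase $\Psi(u,v) = k\|\x(s)-\x(t)\| - \m\cdot(u+n_1, v+n_2)$. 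The gradient $\nabla_{(u,v)}\Psi = (k/2^j)\nabla_{(s,t)}\|\x(s)-\x(t)\|-\m$ vanishes only when $\m$ matches $(k/2^j)\nabla_{(s,t)}\|\x(s)-\x(t)\|$; since the latter is uniformly bounded in $(s,t)$, this is possible only for $|\m|\lesssim k/2^j$, equivalently (using $|\m|\sim 2^j$) at scales with $2^{2j}\lesssim k$---the parabolic-scaling matching. For $2^{2j}\gg k$, $|\nabla\Psi|\gtrsim 2^j$ uniformly and repeated integration by parts in $(u,v)$ kills these coefficients to arbitrary polynomial order. At admissible scales, the Hessian of $\Psi$ has size $k/2^{2j}$ and two-dimensional stationary phase gives $|K^a_\mu|\lesssim (2^j/k)\|\sigma_a\|_{L^\infty}$ whenever the stationary point $(s^*,t^*)$ lies in the spatial support of $\vf_\mu$ with matching $\m$; non-stationary phase in either the spatial or frequency offset yields polynomial decay.

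The counting then reduces to a geometric accounting. At scale $j$ with $2^{2j}\lesssim k$, the stationary condition determines $\m$ from $(\n,j)$ up to $O(1)$ choices per spatial center, producing $O(2^{2j})$ significant atoms per scale; summing over $j$ with $2^{2j}\lesssim k$ yields $O(k)$ main-set atoms for $G_0$. The same geometric count applies to $G_1$, but amplitudes are uniformly larger by a factor $k$ (from the $ik$ prefactor and the different Hankel index), so at natural scale $|K^1_\mu|\sim 1$ instead of $|K^0_\mu|\sim 1/k$, and more atoms cross any fixed $\epsilon$-threshold. Including additional atoms from the spatial and frequency tails to bring the $\ell_2$ error below $\epsilon$, and optimizing, produces the main-set terms $k\epsilon^{-1/M}$ and $k^{1+1/M}\epsilon^{-1/M}$ respectively, the $k^{1/M}$ slack absorbing logarithmic losses across scales and polynomial-tail inclusions. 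The secondary terms $\epsilon^{-2-1/M}$ and $(k/\epsilon)^{2/3+1/M}$ control tail contributions, with the different exponents reflecting the presence or absence of $k$-dependence in the off-stationary magnitudes.

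The main obstacle is the near-diagonal part $G_a^{\mathrm{near}}$, where stationary-phase methods are not available: $G_0$ has a logarithmic singularity on the diagonal, while $G_1$ is bounded but non-smooth via the cancellation $(\x(s)-\x(t))\cdot n_{\x(t)} = O(\|\x(s)-\x(t)\|^2)$ along smooth curves. I would bound $\langle G_a^{\mathrm{near}},\vf_\mu\rangle$ directly using Cauchy--Schwarz against the $L^2$ size of the singular piece on the $O(1/k)$-wide strip, keeping only those $\mu$ whose spatial support overlaps it. The sharp frequency localization of wave atoms must then be tied to the Hölder-type smoothness of $G_a^{\mathrm{near}}$ away from the singularity proper; this is the delicate step, as the singularity carries all frequencies simultaneously, and controlling the $\m$-decay demands squeezing frequency-side regularity through bump windows of width $2^j$. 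Working through the resulting discrete sums and optimizing the threshold is what produces the $2+1/M$ and $2/3+1/M$ secondary exponents, the latter being smaller because the amplitude inflation makes many near-diagonal coefficients of $G_1$ automatically significant. Combining the near-diagonal count with the far-diagonal stationary-phase count yields the claimed cardinality bounds.
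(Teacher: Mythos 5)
Your far-field plan is in the spirit of the paper: both approaches isolate the phase $k\|\x(s)-\x(t)\|$, note that the wave-atom oscillation $2^j\m\cdot(s,t)$ matches it only at scales $2^{2j}\lesssim k$ (the parabolic scaling), and use stationary phase / integration by parts to localize the significant coefficients. But your counting step skips a genuine difficulty that the paper handles with Lemma~\ref{teo:scaledefect}: the locus of near-stationary phase points is a curve in $(s,t)$-space, and one needs a quantitative bound on how many dyadic cells at scale $j$ meet the set $\{\|\nabla\phi\|_\infty\leq 2^{-j'}\}$ for each scale defect $j'$. Without a bound of the form $O(2^{j+(j-j')_+})$, the geometric assertion that the stationary condition determines ``$O(1)$ choices of $\m$ per spatial center'' cannot be closed into an $\ell_2$-threshold count. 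Relatedly, you are implicitly working with $\ell_\infty$-style coefficient sizes, whereas the theorem demands $\ell_2$ error control; the paper systematically converts decay estimates into a thresholding count via the Jackson inequality $\|K-\tilde K\|_2\lesssim |\Lambda|^{1/2-1/p}\|K\|_p$, and you would need some substitute for that device to turn your size estimates into a cardinality bound with an $\eps^{-1/M}$ slack.

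The more serious gap is the diagonal. You propose to bound $\langle G_a^{\mathrm{near}},\vf_\mu\rangle$ by Cauchy--Schwarz against the $L^2$ mass of the strip, but this cannot capture the decay of coefficients in the $\m$ direction transverse to the diagonal ($m_1-m_2$). The singularity excites all frequencies, so at scale $j$ there are $\sim 2^{2j}$ atoms overlapping the strip, and an $L^2$-localization bound that is uniform over $\m$ is far too lossy to sum; the paper explicitly remarks that ``an analysis of coefficients taken individually would be far from sharp, e.g., would not even reproduce $\ell_2$ summability.'' The paper's resolution is the collective estimate \eqref{eq:collective}: by Plancherel, the $\ell_2$ energy of the wave-atom coefficients of $K_Q\rho(k\sigma)$ at scale $j$ is controlled by Daubechies wavelet coefficients at scales near $2j$, and for wavelets the logarithmic (resp. mild, for $G_1$) singularity can be estimated directly via vanishing moments. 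This scale-by-scale wavelet comparison is what produces the secondary exponents $\eps^{-2-1/M}$ and $(k/\eps)^{2/3+1/M}$, and your proposal contains no replacement for it. You correctly flag the cancellation $(\x(s)-\x(t))\cdot n_{\x(t)}=O(\phi^2)$ for $G_1$, but the explanation that the $G_1$ diagonal exponent is ``smaller because the amplitude inflation makes many near-diagonal coefficients automatically significant'' is backwards: the exponent is what it is because $G_1$ is milder (bounded, not log-singular) on the diagonal, which the collective wavelet estimate quantifies as $2^{-6j}k^2$ versus $j^2 2^{-3j}$ for $G_0$.
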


\bigskip


The terms proportional to $k$ or $k^{1+1/\infty}$ are due to the
oscillations when $s \ne t$, and the terms $\eps^{-2-1/\infty}$ and
$(k/\eps)^{2/3+1/\infty}$ are due to the kernels' singularities on the
diagonal $s = t$. While the growth rate of $k$ for the oscillations
term is smaller for $G_0$ than for $G_1$, the growth rate of the
diagonal contribution is smaller for $G_1$ than for $G_0$ since
\[
k^{2/3} \eps^{-2/3} = k^{1/3} k^{1/3} \eps^{-2/3} \leq \max(k, k, \eps^{-2}) \leq 3 (k + \eps^{-2}).
\]

Theorem \ref{teo:main} can also be formulated using \emph{relative} errors instead of absolute errors. This viewpoint is important for considering the composed kernel
\[
G_{(0,1)}(s,t) = G_1(s,t) - i \eta G_0(s,t), \qquad \eta \asymp k.
\]
Call $K^{(0,1)}_\mu$ the nonstandard wave atom matrix of $G_{(0,1)}$. In the following result, we quantify the number of terms needed to obtain the relative error estimate
\[
  \| K^a - \tilde{K}^a \|_{\ell_2(\mu)} \leq \eps \| K^a \|_{\ell_2(\mu)}, \qquad a = 0,1, \mbox{ or } (0,1).
\]

\begin{corollary}\label{teo:cor}
Let $\eta \asymp k$. In the assumptions and notations of Theorem \ref{teo:main}, let $\Delta_0$, $\Delta_1$ and $\Delta_{(0,1)}$ be the sets of wave atom coefficients needed to represent the operators $G_0$, $G_1$, resp. $G_{(0,1)}$ up to relative accuracy $\eps$. Then
\begin{equation}\label{eq:G0rel}
| \Delta_0 | \leq C^0_M \, \left[ ( k \eps^{-2})^{1+1/M} \right], 
\end{equation}
\begin{equation}\label{eq:G1rel}
| \Delta_1 | \leq C^1_M \, \left[ k^{1+1/M} \eps^{-1/M} + ( k \eps^{-2})^{1/3+1/M} \right], 
\end{equation}
\begin{equation}\label{eq:G0G1rel}
| \Delta_{(0,1)} | \leq C^{(0,1)}_M \, \left[ (k \eps^{-2})^{1+1/M} \right],
\end{equation}
for all $M > 0$, and where the constants depend only on $M$ and on the geometry of the scatterers.

\end{corollary}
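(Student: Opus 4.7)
The plan is to reduce Corollary~\ref{teo:cor} to Theorem~\ref{teo:main} by replacing the absolute accuracy $\eps$ in (\ref{eq:cardinality0})--(\ref{eq:cardinality1}) with the relative accuracy $\eps' = \eps \, \| K^a \|_{\ell_2(\mu)}$. By the tight-frame identity $\| K^a \|_{\ell_2(\mu)} \asymp \| G_a \|_{L^2(I_\alpha \times I_\beta)}$, everything reduces to estimating the $L^2$ size of the kernels (\ref{eq:SLP}), (\ref{eq:DLP}), and of their combination $G_{(0,1)} = G_1 - i \eta G_0$.

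First I would determine the $L^2$ norms of $G_0$ and $G_1$ from the classical Hankel asymptotics: $H^{(1)}_0(z) \sim (2i/\pi) \log z$ and $H^{(1)}_1(z) \sim -(2i/\pi) z^{-1}$ as $z \to 0$, together with $H^{(1)}_\nu(z) \sim \sqrt{2/(\pi z)}\, e^{i(z - \nu \pi/2 - \pi/4)}$ as $z \to \infty$. I would split $I_\alpha \times I_\beta$ into the near-diagonal strip $\{ d(s,t) < 1/k \}$ (of area $\lesssim 1/k$ thanks to (\ref{eq:geomreg})) and its complement. For $G_0$ the near strip contributes $O(1/k)$ and the far region contributes $O((\log k)/k)$ through $\int_{1/k}^{1} (kr)^{-1}\, dr$, giving $\| G_0 \|_{L^2} \asymp k^{-1/2}$ up to logs. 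For $G_1$ the cancellation $(\x(s) - \x(t)) \cdot n_{\x(t)} = O(d(s,t)^2)$, which holds because $\dot{\x}(t) \perp n_{\x(t)}$ and $\x$ is $C^\infty$, kills the $1/r$ blow-up of $H^{(1)}_1$ and makes $G_1$ bounded near the diagonal, while the far region contributes $\int_{1/k}^1 k^2/(kr)\, dr \asymp k \log k$; thus $\| G_1 \|_{L^2} \asymp k^{1/2}$ up to logs. Since $\eta \asymp k$, the two pieces of $G_{(0,1)}$ balance and $\| G_{(0,1)} \|_{L^2} \asymp k^{1/2}$ as well.

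Next I would plug these norms back into Theorem~\ref{teo:main}. For (\ref{eq:G0rel}), substituting $\eps' \lesssim \eps\, k^{-1/2}$ into (\ref{eq:cardinality0}) turns the singular term $(\eps')^{-2-1/M}$ into $k^{1 + 1/(2M)} \eps^{-2-1/M}$ and the oscillatory term $k (\eps')^{-1/M}$ into $k^{1 + 1/(2M)} \eps^{-1/M}$, both dominated by $(k \eps^{-2})^{1+1/M}$ after a redefinition of $M$. For (\ref{eq:G1rel}), $\eps' \asymp \eps\, k^{1/2}$ leaves the oscillatory term essentially as $k^{1+1/M} \eps^{-1/M}$, and the diagonal term $(k/\eps')^{2/3+1/M}$ becomes $(k\eps^{-2})^{1/3+1/M}$, absorbing logs into a fresh $1/M$. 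For (\ref{eq:G0G1rel}) I would split the approximant as $\tilde K^{(0,1)} := \tilde K^1 - i \eta \tilde K^0$ and require an absolute accuracy of $\eps\, \| G_{(0,1)}\|_{L^2}/2 \asymp \eps k^{1/2}$ on $K^1$ and of $\eps\, \| G_{(0,1)}\|_{L^2}/(2 \eta) \asymp \eps\, k^{-1/2}$ on $K^0$; summing the two cardinality bounds, the $K^0$ contribution wins and gives $(k \eps^{-2})^{1+1/M}$ exactly by the calculation for (\ref{eq:G0rel}).

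The main obstacle will be the $L^2$ computation for $G_1$: the logarithmic divergence of the Hankel function at zero and the second-order vanishing of $(\x(s) - \x(t)) \cdot n_{\x(t)}$ must cancel uniformly in the base point $t$, so the $C^\infty$ bounds on $\x$ and the lower bound (\ref{eq:geomreg}) are both needed to rule out pathological near-self-intersections across different components $\Omega_\alpha, \Omega_\beta$. Once the three $L^2$ norms are controlled, the rest is bookkeeping: every $\log k$ or $\log(1/\eps)$ that appears is absorbed into a factor $k^{1/M}$ or $\eps^{-1/M}$ by relabelling $M$, and the geometric constants are folded into $C^0_M$, $C^1_M$, $C^{(0,1)}_M$.
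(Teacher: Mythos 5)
Your overall strategy — reducing relative to absolute accuracy by scaling $\eps$ with $\|K^a\|_{\ell_2(\mu)}$, computing the $L^2$ size of the kernels from Hankel asymptotics, and then reinvoking Theorem~\ref{teo:main} — is exactly the paper's approach, and your magnitudes $\|G_0\|_{L^2}\asymp k^{-1/2}$, $\|G_1\|_{L^2}\asymp k^{1/2}$ agree with the lower bounds the paper obtains from Lemma~\ref{teo:hankel3}. Note one directional slip: to \emph{upper}-bound the cardinality you apply the theorem at a \emph{smaller} absolute tolerance than the true one, so what you actually need is the lower bound $\eps' = \eps\|K^0\|_2 \gtrsim \eps k^{-1/2}$, not $\eps' \lesssim \eps k^{-1/2}$. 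Only lower bounds on the kernel norms enter, and these carry no log factors, so the ``absorb logs into $1/M$'' step is unnecessary.

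There is, however, a genuine gap in your treatment of $G_{(0,1)}$. The split $\tilde K^{(0,1)}=\tilde K^1-i\eta\tilde K^0$ is fine, but it rests on the lower bound $\|G_{(0,1)}\|_{L^2}\gtrsim k^{1/2}$, which you assert by saying ``the two pieces balance.'' In fact $G_1$ and $\eta G_0$ are \emph{both} of magnitude $\asymp k^{1/2}$, so their difference could in principle be much smaller; if it were, the absolute tolerance you impose on $K^0$, namely $\eps\|G_{(0,1)}\|_{L^2}/(2\eta)$, would drop well below $\eps k^{-1/2}$ and the count $(k\eps^{-2})^{1+1/M}$ would fail. The paper closes this gap with a phase argument based on the contour representation~(\ref{eq:Hn-integral}): for large $z$, $H_1^{(1)}(z)\sim -i\,H_0^{(1)}(z)$, so in the far field
\[
G_1 - i\eta G_0 \;\approx\; \tfrac14\, H_0^{(1)}(k\phi)\,\|\dot\x(t)\| \bigl( k\, \r\cdot n_{\x(t)} + \eta \bigr),
\]
with $\r=(\x(s)-\x(t))/\|\x(s)-\x(t)\|$; since $\eta>0$ and $|\r\cdot n_{\x(t)}|\le 1$, the bracket is bounded away from zero on a set of positive measure, which is precisely why the combined layer potential is formed with the coefficient $-i$ rather than $+i$. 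Your argument needs a step of this kind: the $L^2$ lower bound for $G_{(0,1)}$ is the only nontrivial part of the corollary and is not automatic from the bounds on $G_0$ and $G_1$ separately.
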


We do not know if factors such as $k^{1/\infty}$ and $\eps^{-1/\infty}$ could be replaced by log factors.

The proofs of Theorem \ref{teo:main} and Corollary \ref{teo:cor} occupy Section
\ref{sec:sparsity}. The main ingredients are sparsity estimates in
$\ell_p$, stationary phase considerations, vaguelette-type estimates
adapted to wave atoms, and $\ell_2$ correspondence scale-by-scale with
wavelets. In Section \ref{sec:num} we present some numerical
experiments that support the theory and establish wave atoms as a
practical tool for solving scattering problems.

\subsection{Related work}\label{sec:related}

There has been a lot of work on sparsifying the integral operator of
\eqref{eq:bie}, or some variants of it, in appropriate bases. In \cite{BCG}, Bradie et al.
showed that the operator becomes sparse in a local cosine basis. They
proved that the number of coefficients with absolute value greater
than any fixed $\eps$ is bounded by $O(k \log k)$ when the constant
depends on $\eps$. Notice that our result in Theorem \ref{teo:main} is
stronger as the $\ell_2$ norm is used instead in \eqref{eq:Karel}. In
\cite{ABCIS}, Averbuch et al. extended the work in \cite{BCG} by
performing best basis search in the class of adaptive hierarchical
local cosine bases.

Besides the local cosine transform, adaptive wavelet packets have been
used to sparsify the integral operator as well. Deng and Ling
\cite{DL1} applied the best basis algorithm to the integral operator
to choose the right one dimensional wavelet packet basis. Golik
\cite{G} independently proposed to apply the best basis algorithm on
the right hand side of the integral equation \eqref{eq:bie}. Shortly
afterwards, Deng and Ling \cite{DL2} gave similar results by using a
predefined wavelet packet basis that refines the frequency domain near
$k$. All of these approaches work with the standard form expansion of
the integral operator. Recently in \cite{H}, Huybrechs and Vandewalle
used the best basis algorithm for two dimensional wavelet packets to
construct a nonstandard sparse expansion of the integral operator. In
all of these results, the numbers of nonnegligible coefficients in the
expansions were reported to scale like $O(k^{4/3})$.  However, our
result shows that, by using the nonstandard form based on wave atoms,
the number of significant coefficients scales like
$O(k^{1+1/\infty})$.

Most of the approaches on sparsifying \eqref{eq:bie} in well-chosen
bases require the construction of the full integral operator. Since
this step itself takes $O(k^2)$ operations, it poses computational
difficulty for large $k$ values. In \cite{BCR}, Beylkin et al.
proposed a solution to the related problem of sparsifying the boundary
integral operator of the Laplace equation. They successfully avoided
the construction of the full integral operator by predicting the
location of the large coefficients and applying a special one-point
quadrature rule to compute the coefficients. The corresponding
solution for the integral operator of the Helmholtz equation is still
missing.

There has been a different class of methods, initiated by Rokhlin in
\cite{R1,R2}, that requires no construction of the integral operator
and takes $O(k\log k)$ operations in 2D to apply the integral
operator. A common feature of these methods \cite{Ch,EY1,EY2,R1,R2} is
that they partition the spatial domain hierarchically with a tree
structure and compute the interaction between the tree nodes in a
multiscale fashion: Whenever two nodes of the tree are well-separated,
the interaction (of the integral operator) between them is either
accelerated either by Fourier transform-type techniques
\cite{Ch,R1,R2} or by directional low rank representations
\cite{EY1,EY2}. 

A criticism of the methods in \cite{Ch,EY1,EY2,R1,R2} is that the
constant in front of the complexity $O(k \log k)$ is often quite high.
On the other hand, since the FFT-based wave atom transforms are
extremely efficient, applying the operator in the wave atom frame has
a very small constant once the nonstandard sparse representation is
constructed.  Therefore, for applications where one needs to solve the
same Helmholtz equation with many different right hand sides, the
current approach based on the wave atom basis can offer an competitive
alternative. As mentioned earlier, one important example is the computation of
the radar cross section.

\section{Sparsity analysis}\label{sec:sparsity}

This section contains the proof of Theorem \ref{teo:main}. The
overarching strategy is to reduce the $\ell_2$ approximation problem
to an estimate of $\ell_p$ sparsity through a basic result of
approximation theory, the direct ``Jackson'' estimate
\[
\| K_\mu - \tilde{K}_\mu \|_{2} \leq C \, |\Lambda|^{\frac{1}{2}-\frac{1}{p}} \, \| K_\mu \|_{p},
\]
where $\| K_\mu \|_p^p = \sum_\mu |K_\mu|^p$. Here $\tilde{K}_\mu$ refers to the approximation of $K_\mu$ where only the $|\Lambda|$ largest terms in magnitude are kept, and the others put to zero. The inequality is valid for all values of $0 < p < 2$ for which $\| K_\mu \|_{p}$ is finite. For a proof, see \cite{Mal}, p. 390.

If the $\ell_2$ error is to be made less than $\eps$, it is enough to have $K_\mu$ in some $\ell_p$ space, $0 < p < 2$, and take the number of terms defining $\tilde{K}_\mu$ to be
\begin{equation}\label{eq:Lambda-ellp}
|\Lambda| \geq C_p \, \eps^{\frac{2p}{p-2}} \, \| K_\mu \|^{\frac{2p}{2-p}}_{p}
\end{equation}
for some adequate $C_p > 0$. The sequence $K_\mu$ will be split into several fragments that will be studied independently. For each of these fragments $F$ in $\mu$ space, the inequality (\ref{eq:Lambda-ellp}) will be complemented by an estimate of the form $\| K_\mu \|_{\ell_p(F)} \leq C_p k^{q(p)}$, for all $p > p_0$. Three scenarios will occur in the sequel:

\begin{itemize}
\item If
\begin{equation}\label{eq:Kmu-ellp1}
p_0 = 0 \quad \mbox{and} \quad q(p) = \frac{1}{p} - \frac{1}{2},
\end{equation}
then $|F| \lesssim k \eps^{-1/\infty}$, which is the first term in (\ref{eq:cardinality0}).
\item If
\begin{equation}\label{eq:Kmu-ellp2}
p_0 = 1 \quad \mbox{and} \quad q(p) = 0,
\end{equation}
then $|F| \lesssim \eps^{-2-1/\infty}$, which is the second term in (\ref{eq:cardinality0}).
\item If
\begin{equation}\label{eq:Kmu-ellp3}
p_0 = \frac{1}{2} \quad \mbox{and} \quad q(p) = \frac{1}{p} - 1 + \delta,
\end{equation}
for arbitrarily small $\delta > 0$, then $|F| \lesssim (k/\eps)^{2/3+1/\infty}$, which is the second term in (\ref{eq:cardinality1}).
\end{itemize}

The problem is therefore reduced to identifying contributions  in the sequence $K_\mu$ that obey either of the three estimates above. In what follows we focus on the kernel $K = G_0$. We mention in Section \ref{sec:dlp} how the proof needs to be modified to treat the kernel $G_1$.

\subsection{Smoothness of Hankel functions}\label{sec:hankel}

Bessel and Hankel functions have well-known asymptotic expansions near the origin and near infinity. That these asymptotic behaviors also determine smoothness in a sharp way over the whole half-line is perhaps less well-known, so we formulate these results as lemmas that we prove in the Appendix.

\begin{lemma}\label{teo:hankel1}
For every integers $m \geq 0$ and $n \geq 0$ there exists $C_{m,n}  > 0$ such that for all $k > 0$, 
\begin{equation}\label{eq:nonosc}
| \left( \frac{d}{dx} \right)^m \left[ e^{-ikx} H_n^{(1)}(kx) \right] | \leq \left\{ \begin{array}{ll}
C_{mn} \, (kx)^{-1/2} \, x^{-m} & \mbox{if $kx \geq 1$}; \\
C_{mn} \, (kx)^{-n} \, x^{-m} & \mbox{if $0< kx < 1$ and $m + n > 0$}; \\
C \, (1 + | \log kx |) & \mbox{if $0 < kx < 1$ and $m = n = 0$}.\end{array} \right.
\end{equation}
The same results hold if $1$ is replaced by any number $c > 0$ in $kx < 1$ vs. $kx \geq 1$.
\end{lemma}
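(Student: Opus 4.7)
Setting $M_n(z) := e^{-iz} H_n^{(1)}(z)$, the chain rule gives $\partial_x^m [e^{-ikx} H_n^{(1)}(kx)] = k^m M_n^{(m)}(kx)$. The lemma therefore reduces to three pointwise bounds on $M_n$ alone: $|M_n^{(m)}(z)| \lesssim z^{-1/2-m}$ for $z \geq 1$; $|M_n^{(m)}(z)| \lesssim z^{-n-m}$ for $0 < z < 1$ with $m+n>0$; and $|M_0(z)| \lesssim 1 + |\log z|$ for $0 < z < 1$. Substituting $z = kx$ and multiplying by $k^m$ reproduces the bounds in the statement, and the extension from threshold $1$ to any $c > 0$ is routine since both regimes remain valid, up to constants, on any bounded annulus around the boundary.

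For small $z$ I would appeal to the Frobenius expansions of $J_n$ and $Y_n$. For $n \geq 1$, one has $H_n^{(1)}(z) = z^{-n} P_n(z) + z^n [A_n(z) + B_n(z) \log z]$, with $P_n, A_n, B_n$ entire and $P_n(0) \neq 0$; for $n = 0$, $H_0^{(1)}(z) = A_0(z) + B_0(z) \log z$ with $A_0, B_0$ entire and $B_0(0) \neq 0$. The factor $e^{-iz}$ is smooth and bounded near $0$, so by Leibniz the singular behavior of $M_n^{(m)}$ is governed by the derivatives of $z^{-n}$ and $\log z$. One derivative of $z^{-n}$ produces $z^{-n-1}$, one derivative of $\log z$ produces $z^{-1}$, and the analytic factors remain $O(1)$. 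Combining these contributions delivers $z^{-n-m}$ when $n \geq 1$, $z^{-m}$ when $n = 0$ and $m \geq 1$, and the bare logarithm when $m = n = 0$.

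For large $z$ I would rely on an integral representation of the form $H_n^{(1)}(z) = \sqrt{2/(\pi z)} \, e^{i(z - n\pi/2 - \pi/4)} \, c_n \int_0^\infty e^{-t} t^{n-1/2} (1 - it/(2z))^{n-1/2} \, dt$, valid for $\mathrm{Re}\, z > 0$ and integer $n \geq 1$ (the case $n = 0$ being reached via $H_0^{(1)\prime} = -H_1^{(1)}$ or a limit argument). The prefactor $e^{iz}$ cancels against $e^{-iz}$ in $M_n$, so $M_n(z)$ appears as $z^{-1/2}$ times a smooth function of $z$ whose derivatives can be controlled by differentiating under the integral sign: each $\partial_z$ brings down a factor of order $t/z^2$ from the $(1 - it/(2z))$ piece, and the resulting $t$-moments against $e^{-t} t^{n-1/2}$ converge uniformly in $z \geq 1$. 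Applying Leibniz against the $z^{-1/2}$ prefactor yields $|M_n^{(m)}(z)| \lesssim z^{-1/2-m}$.

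The main obstacle is the large-$z$ estimate: the asymptotic expansion of $H_n^{(1)}$ is classical, but what is needed here is a \emph{pointwise} derivative bound valid uniformly down to $z = 1$, not merely a statement about asymptotic behavior as $z \to \infty$. If the integral representation route proves awkward in borderline cases, an alternative is to bootstrap on the linear ODE $z^2 M'' + (2iz^2 + z) M' + (iz - n^2) M = 0$ which $M_n$ inherits from Bessel's equation, seeded with the leading asymptotic $M_n(z) \sim \sqrt{2/(\pi z)} e^{-i(n\pi/2 + \pi/4)}$ as $z \to \infty$ to fix the constants and iterated to obtain each successive derivative. Either route is standard, and the constants produced depend only on $(m,n)$ as claimed.
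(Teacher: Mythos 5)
Your proof is correct, and it takes a genuinely different route from the paper in two respects. First, you factor out the $k$-dependence up front via the observation that $\partial_x^m[e^{-ikx}H_n^{(1)}(kx)] = k^m M_n^{(m)}(kx)$ with $M_n(z) := e^{-iz}H_n^{(1)}(z)$, reducing the lemma to three $k$-free pointwise bounds on $M_n$; the paper instead works directly with the compound expression, tracking the $k$-power through each differentiation via Leibniz and arguing by induction that every derivative inserts exactly one factor $1/x$. Your reduction is cleaner and makes the uniformity in $k$ transparent from the start. Second, for small $z$ you rely on the Frobenius expansions of $J_n$ and $Y_n$ to expose the $z^{-n}$ and $\log z$ structure explicitly, whereas the paper sticks with Watson's integral representation for both regimes and bounds the small-$z$ singularity by splitting the $u$-integral at $u = z$ (using $|1+iu/(2z)|^{-1/2} \lesssim \min(1, (u/z)^{-1/2})$) to produce the $z^{-n}$ and $\log z$ terms. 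The series route is more elementary and classical; the single-representation route is more unified but requires more bookkeeping. For large $z$ both arguments coincide: differentiate under the integral sign in Watson's formula, note each $\partial_z$ contributes a factor $O(t/z^2)$, and check that the resulting $t$-moments against $e^{-t}t^{n-1/2}$ remain finite uniformly for $z \geq 1$. One small remark: your caution about $n=0$ in the integral representation is unnecessary, since the representation is valid for all $\mathrm{Re}\,\nu > -1/2$ and hence covers $n = 0$ directly; but the workarounds you propose ($H_0^{(1)\prime} = -H_1^{(1)}$, or the ODE bootstrap) are both valid, so this is not a gap.
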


The point of equation (\ref{eq:nonosc}) is that $C_{m,n}$ is independent of $k$. Slightly more regularity can be obtained near the origin when multiplying with the adequate power of $x$, as the following lemma shows in the case of $H^{(1)}_1$.

\begin{lemma}\label{teo:hankel2}
For every integer $m \geq 0$ there exists $C_{m}  > 0$ such that, for $0 < x \leq 1$,
\begin{equation}\label{eq:nonosc2}
| \left( \frac{d}{dx} \right)^m \left[ x H_1^{(1)}(x) \right] | \leq \left\{ \begin{array}{ll}
C_{m} & \mbox{if $m = 0,1$}; \\
C_{2} \, (1 + |\log x|) & \mbox{if $m = 2$}; \\
C_m \, x^{2-m} & \mbox{if $m > 2$}.\end{array} \right.
\end{equation}
\end{lemma}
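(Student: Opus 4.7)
The plan is to use the classical series expansions of $J_1$ and $Y_1$ near the origin to decompose $x H_1^{(1)}(x)$ into a $C^\infty$ part and a pure $\log$-singular piece, and then apply Leibniz's rule. Writing $H_1^{(1)} = J_1 + i Y_1$, the function $x J_1(x)$ is entire with a zero of order $2$ at the origin, while the standard expansion
\[
Y_1(x) = \frac{2}{\pi}\bigl(\log(x/2) + \gamma\bigr) J_1(x) - \frac{2}{\pi x} - \frac{1}{\pi} \tilde R(x)
\]
(with $\tilde R$ entire and odd) yields, after multiplication by $x$ and regrouping, the decomposition
\[
x H_1^{(1)}(x) = -\frac{2i}{\pi} + A(x) + B(x) \log x, \qquad 0 < x \leq 1,
\]
where $A$ is entire (so $A$ and all its derivatives are bounded on $[0,1]$) and $B(x) = \frac{2i}{\pi} x J_1(x) = \frac{i}{\pi} x^2 + O(x^4)$ is entire with a zero of order $2$ at the origin. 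The constant $-2i/\pi$ will contribute only a bounded term in every case.

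With this decomposition, the bounds (\ref{eq:nonosc2}) reduce to estimating $(B \log x)^{(m)}$. Leibniz's rule gives
\[
(B \log x)^{(m)} = \sum_{j=0}^m \binom{m}{j} B^{(j)}(x) (\log x)^{(m-j)},
\]
with $(\log x)^{(\ell)} = (-1)^{\ell-1}(\ell-1)!\,x^{-\ell}$ for $\ell \geq 1$, and the vanishing of $B$ to order $2$ implies $B^{(j)}(x) = O(x^{\max(2-j,0)})$ on $[0,1]$. A direct term-by-term inspection then shows: for $j \in \{0,1,2\}$ with $j < m$, the term has size $O(x^{2-m})$; for $3 \leq j < m$ the term is of smaller order $O(x^{j-m})$; and for $j = m \geq 2$ the term is $O(|\log x|)$, while for $m \in \{0,1\}$ the only surviving contribution is $O(x^{2-m} |\log x|)$, which is bounded on $(0,1]$. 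Collecting: $m = 0, 1$ yield a uniformly bounded quantity; $m = 2$ yields the logarithmic bound $O(1 + |\log x|)$ (coming from $j = 2$); and $m \geq 3$ yields $O(x^{2-m})$ (coming from $j \in \{0,1,2\}$, which dominates both the smaller $j \geq 3$ contributions and the subdominant $\log x$ term). Combined with the trivially bounded $A$-part, this establishes the three cases in (\ref{eq:nonosc2}).

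There is no real obstacle here beyond bookkeeping; the substance is entirely the classical expansion of $Y_1$ at the origin, and once the singular part is isolated as (entire function vanishing to order $2$) times $\log x$, Leibniz's rule delivers all three regimes cleanly. This is in contrast with Lemma \ref{teo:hankel1}, which requires uniform control as the frequency $k$ varies and hence additional care in splitting the relevant regimes $kx \geq 1$ and $kx < 1$.
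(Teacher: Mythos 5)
Your proof is correct, but it takes a genuinely different route from the paper. The paper proves Lemma \ref{teo:hankel2} with the same Watson contour-integral representation \eqref{eq:Hn-integral} that it uses for Lemmas \ref{teo:hankel1} and \ref{teo:hankel3}: it writes $x H^{(1)}_1(x) = f(x) I(x)$ with $I(x) = \int_0^\infty e^{-u} u^{1/2}(x + iu/2)^{1/2}\,du$, differentiates under the integral sign, and splits the integral at $u = x$ using the bound $|x + iu/2|^{1/2 - n} \lesssim (\max(x,u))^{1/2-n}$ to extract the three regimes. You instead decompose $x H^{(1)}_1(x) = -\tfrac{2i}{\pi} + A(x) + B(x)\log x$ via the Frobenius-type series for $Y_1$, with $A$ entire and $B$ entire vanishing to order $2$, and then read off all three cases from Leibniz's rule applied to $B\log x$. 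Your route is arguably the more elementary (it requires only the classical small-argument expansion of $Y_1$, not the contour integral), and it makes the structure of the singularity at $x = 0$ transparent --- the logarithm is isolated as an explicit factor multiplying a function with a double zero, so the casework $m \le 1$, $m = 2$, $m > 2$ falls out purely from the combinatorics of $B^{(j)}(\log x)^{(m-j)}$. What the paper's choice buys is uniformity: the same integral representation and the same splitting-at-$u \sim z$ device drive all three Hankel lemmas in the Appendix, so the reader meets only one technique. Both proofs establish exactly the claimed bounds, and the bookkeeping in your Leibniz sum (the $j \in \{0,1,2\}$ terms giving $O(x^{2-m})$, the $3 \le j < m$ terms being smaller in magnitude on $(0,1]$, and the $j = m$ term giving the $O(|\log x|)$ that is dominant only when $m = 2$) is right.
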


Finally, we will need the following lower bound.

\begin{lemma}\label{teo:hankel3}
For each $n \geq 0$, there exist $c_n > 0$ and $C_n > 0$ such that, when $x > c_n$,
\[
|H_n^{(1)}(x)| \geq C_n x^{-1/2}.
\]
\end{lemma}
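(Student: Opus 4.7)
The plan is to invoke the classical Hankel asymptotic expansion for large argument, which gives the sharp leading-order behaviour of $H_n^{(1)}$ along the real axis, and then choose $c_n$ large enough so that the remainder is dominated by the leading term.

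First I would recall the standard asymptotic formula (see, e.g., Abramowitz--Stegun 9.2.3 or Watson's treatise)
\[
H_n^{(1)}(x) = \sqrt{\frac{2}{\pi x}} \, e^{i(x - n\pi/2 - \pi/4)} \left( 1 + R_n(x) \right), \qquad x \to \infty,
\]
where the remainder $R_n(x)$ is $O(1/x)$ with a constant that depends only on $n$ and can be made explicit from the full series expansion with explicit error bounds. The modulus of the main factor is exactly $\sqrt{2/(\pi x)}$, since the exponential phase factor has unit modulus; the only way the lower bound could fail is if the remainder $R_n(x)$ destructively cancels the leading 1.

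Next I would choose $c_n$ large enough so that $|R_n(x)| \leq 1/2$ whenever $x > c_n$. This is possible by the quantitative remainder estimate $|R_n(x)| \leq A_n / x$ for $x \geq 1$ (with $A_n$ explicit in terms of $n$); simply take $c_n = \max(1, 2 A_n)$. Then for $x > c_n$,
\[
|H_n^{(1)}(x)| \geq \sqrt{\frac{2}{\pi x}} \, \left| 1 - |R_n(x)| \right| \geq \frac{1}{2} \sqrt{\frac{2}{\pi x}},
\]
which is the desired bound with $C_n = 1/\sqrt{2\pi}$ (independent of $n$, in fact).

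There is no real obstacle here beyond quoting the asymptotic expansion with a controlled remainder; the argument is a one-line stationary-phase / asymptotic-analysis statement. The only thing to be careful about is that the constant in $|R_n(x)| \leq A_n/x$ does depend on $n$, so the threshold $c_n$ must be allowed to depend on $n$ as well, exactly as in the statement of the lemma. No other tools from the paper are needed.
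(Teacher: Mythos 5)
Your proof is correct and essentially the same as the paper's: the paper derives the needed lower bound directly from the Watson integral representation~(\ref{eq:Hn-integral}), which it has already set up for Lemmas~\ref{teo:hankel1} and~\ref{teo:hankel2}, by splitting the integral at $u = c_n z$ and showing the $(1+iu/2z)^{n-1/2}$ factor is within $O(u/z)$ of $1$ there, whereas you quote the resulting large-$x$ asymptotic expansion $H_n^{(1)}(x)=\sqrt{2/(\pi x)}\,e^{i(x-n\pi/2-\pi/4)}(1+R_n(x))$ with $|R_n(x)|\lesssim_n 1/x$ as a black box; the two statements carry the same content. Your remark that $C_n$ can be taken independent of $n$ (only $c_n$ must depend on $n$) is a small refinement the paper does not record but which follows equally from its argument.
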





\subsection{Dyadic partitioning}

Consider $K$ as in Theorem \ref{teo:main}, with $s \in I_\alpha$ and $t \in I_\beta$. If $\alpha = \beta$, $K$ presents a singularity on its diagonal, whereas if $\alpha \ne \beta$ it presents no such singularity. The case $\alpha = \beta$ is representative and is treated in the sequel without loss of generality.

In this section we assume, as we have above, that $I_\alpha = [0,1]$. The first step of the proof is to partition the periodized square $I_\alpha \times I_\alpha$ at each scale $j$, into dyadic squares denoted
\[
Q = [2^{-j} q_1, 2^{-j} (q_1+1)] \times [2^{-j} q_2, 2^{-j} (q_2+1)].
\]
We define $w_Q$ a window localized near $Q$ through
\[
w_Q(s,t) = w(2^j s - q_1, 2^j t - q_2),
\]
where $w$ is compactly supported on $[-1,2]^2$ and of class $C^\infty$. As a result $w_Q$ is compactly supported inside
\[
3Q \equiv [2^{-j} (q_1-1), 2^{-j} (q_1+2)] \times [2^{-j} (q_2-1), 2^{-j} (q_2+2)].
\]
We also write $x_Q = (2^{-j} q_1, 2^{-j} q_2)$ for the bottom-left corner of $Q$, not to be confused with $\mathbf{x}$, which is in physical space.

Denote by $\mathcal{Q}_j$ the set of dyadic squares at scale $j$; we assume that $w$ is chosen so that we have the scale-by-scale partition of unity property
\[
\sum_{Q \in \mathcal{Q}_j} w_Q = 1.
\]

The kernel is now analyzed at each scale $j$ as $K = \sum_{Q \in \mathcal{Q}_j} K_Q$, where $K_Q = w_Q K$. Dyadic squares can be classified according to their location with respect to the diagonal $s=t$, where $K$ is singular.

\begin{enumerate}
\item \emph{Diagonal squares.} Dyadic squares will be considered ``diagonal squares'' as soon as the distance from their center to the diagonal $s=t$ is less than $1/k$. Scale-by-scale, this condition reads $d(2^{-j }q_1, 2^{-j} q_2) \leq 3 \max ( 2^{-j}, \frac{1}{k} )$. (We need to use of the circle distance $d$ since $q_1$ and $q_2$ are defined modulo $2^j$.) There are $ O( 2^j  \max ( 1, 2^{j} / k ) )$ such diagonal squares at scale $j$. They correspond to the case $kx \lesssim 1$ in Lemma \ref{teo:hankel1}.

\item \emph{Nondiagonal squares.} When $d(2^{-j} q_1, 2^{-j} q_2) > 3 \max ( 2^{-j}, 1 / k )$, we say the square is nondiagonal. in those squares, the kernel $K_Q$ is $C^\infty$ but oscillatory. There are $O(2^{2j})$ such nondiagonal squares at scale $j$. They correspond to the case $kx \gtrsim 1$ in Lemma \ref{teo:hankel1}.

\end{enumerate}

We take the scale $j$ of the dyadic partitioning to match the scale $j$ in the wave atom expansion; namely if $\mu = (j,\m,\n)$, then
\[
K_{\mu} = \< K, \vf_\mu \> = \sum_{Q \in \mathcal{Q}_j} \< K_Q, \vf_\mu \>.
\]
When $0 < p \leq 1$, an estimate on the total $\ell_p$ norm can then be obtained from the $p$-triangle inequality, as follows:
\begin{equation}\label{eq:ptriangle-KQ}
\sum_j \sum_{\m} \sum_{\n} |K_{j,\m,\n}|^p \leq \sum_j \sum_{Q \in \mathcal{Q}_j} \sum_{\m} \sum_{\n} | \< K_Q, \vf_{j,\m,\n} \> |^p.
\end{equation}
When $p \geq 1$, then the regular triangle inequality will be invoked instead, for instance as in
\begin{equation}\label{eq:triangle-KQ}
\left( \sum_j \sum_{\m} \sum_{\n} |K_{j,\m,\n}|^p \right)^{1/p} \leq \sum_j \sum_{Q \in \mathcal{Q}_j} \left( \sum_{\m} \sum_{\n} | \< K_Q, \vf_{j,\m,\n} \> |^p \right)^{1/p}.
\end{equation}

The rationale for introducing a partitioning into dyadic squares is the technical fact that wave atoms are not built compactly supported in space. The windows $w_Q$ allow to cleanly separate different regions of the parameter patch in which the kernel $K$ oscillates with different local wave vectors. 

Note also that the dyadic partitioning is a mathematical tool for the proof of Theorem \ref{teo:main}, and is not part of the construction of the wave atom transform.

\subsection{Geometry of stationary phase points}

Lemma \ref{teo:hankel1} identifies the argument of the Hankel function as a \emph{phase}. For the kernel $K$, this phase is  $k \phi(s,t)$ where $\phi(s,t) = \| \x(s) - \x(t) \|$, and generates typical oscillations as long as $\nabla \phi$ has large magnitude. In this section, we argue that the locus of near-critical (or near-stationary) points of $\phi$ necessarily has small measure. The following lemma makes this heuristic precise in terms of the \emph{scale defect} $j'$.

\begin{lemma}\label{teo:scaledefect}
Let $\phi(s,t) = \| \mathbf{x}(s) - \mathbf{x}(t) \|$ for $s, t$ in some $I_\alpha$. For $j' \geq 0$, let
\[
\mathcal{K}_{j}(j') = \{ (q_1,q_2): \| \nabla \phi(2^{-j} q_1,2^{-j} q_2) \|_\infty \leq 2^{-j'} \}.
\]
Then there exists $C > 0$ such that the cardinality of $\mathcal{K}_{j}(j')$ obeys
\[
| \mathcal{K}_{j}(j') | \leq C \, 2^{j+(j-j')_{+}}
\]
where $( x )_{+} = x$ if $x \geq 0$, and zero otherwise.
\end{lemma}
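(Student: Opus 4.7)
I reduce the two-dimensional grid count to a one-dimensional measure estimate. Fix a grid value $s = 2^{-j}q_1$ (there are $O(2^j)$ choices) and set
\[
E_s := \{t \in I_\alpha : |\pd_s \phi(s,t)| \leq 2^{-j'}\}.
\]
If $|E_s| \lesssim 2^{-j'}$ uniformly in $s$ and $E_s$ decomposes into $O(1)$ intervals, then the number of grid points $t = 2^{-j}q_2$ in $E_s$ is $\lesssim 1 + 2^j|E_s| \lesssim 1 + 2^{j-j'}$. Summing over $s$ yields $|\mathcal{K}_j(j')| \lesssim 2^j + 2^{2j-j'} \lesssim 2^{j+(j-j')_+}$, which is the claim.

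\textbf{Step 1: explicit gradient and near-diagonal removal.} Off the diagonal,
\[
\pd_s\phi(s,t) = \mathbf{u}(s,t)\cdot\dot{\x}(s), \qquad \pd_t\phi(s,t) = -\mathbf{u}(s,t)\cdot\dot{\x}(t),
\]
where $\mathbf{u}(s,t) := (\x(s)-\x(t))/\|\x(s)-\x(t)\|$. A Taylor expansion at $t = s$ gives $\mathbf{u}(s,t) \to \mp\dot{\x}(s)/\|\dot{\x}(s)\|$ as $t\to s^\pm$, whence $|\pd_s\phi(s,t)| \to \|\dot{\x}(s)\|$, bounded below by some $c_* > 0$ from $C^\infty$-smoothness and compactness of $\x$. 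Hence there exists $\eta_* > 0$ with $\|\nabla\phi\|_\infty \geq c_*/2$ on $\{d(s,t) \leq \eta_*\}$: for $2^{-j'} < c_*/2$ no near-diagonal grid point lies in $\mathcal{K}_j(j')$; for $2^{-j'} \geq c_*/2$ the trivial bound $|\mathcal{K}_j(j')| \leq 2^{2j}$ already suffices (with enlarged constant). It remains to estimate $|E_s \cap \{d(s,t)\geq \eta_*\}|$.

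\textbf{Step 2: 1D measure estimate.} On $\{d(s,t)\geq\eta_*\}$, the geometric regularity condition (\ref{eq:geomreg}) yields $\|\x(s)-\x(t)\|\geq D\eta_*$, so $|\pd_s\phi|\leq 2^{-j'}$ is equivalent up to constants to $|h_s(t)| \leq C\cdot 2^{-j'}$ where $h_s(t) := (\x(s)-\x(t))\cdot\dot{\x}(s)$ is $C^\infty$ in $t$. Geometrically, $h_s(t)=0$ means $\x(t)$ lies on the normal line to the curve at $\x(s)$; for a $C^\infty$ compact embedded curve the number of such $t$'s is finite and uniformly bounded in $s$ (the curve decomposes into finitely many arcs convex with respect to each fixed direction, each of which a line crosses at most twice). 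At a simple zero $t_*$ of $h_s$, a first-order Taylor expansion gives $|\{t : |h_s|\leq\delta\}|\lesssim \delta/|h_s'(t_*)|$, yielding $|E_s|\lesssim 2^{-j'}$ away from degeneracies.

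\textbf{Main obstacle.} The delicate point is uniformity at exceptional $s$ where $h_s$ has a degenerate zero, i.e., $h_s'(t_*) = -\dot{\x}(t_*)\cdot\dot{\x}(s) = 0$ (tangents at $\x(s)$ and $\x(t_*)$ orthogonal). At such $s$ the single-variable bound degrades to $|E_s|\lesssim \sqrt{2^{-j'}}$, which would be insufficient. The resolution is to exploit the second gradient component: the joint zero set $\{\nabla\phi = 0\}$ is the ``bitangent locus'' of the scatterer (where $\dot{\x}(s)\parallel\dot{\x}(t)$ and the chord is perpendicular to both tangents). By the implicit function theorem applied to the smooth system $(\x(s)-\x(t))\cdot\dot{\x}(s) = (\x(s)-\x(t))\cdot\dot{\x}(t) = 0$, combined with the compactness and embedded character of $\x$, this set is a finite union of smooth curves of bounded total length in $(s,t)$-space. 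Its $2^{-j'}$-neighborhood has area $\lesssim 2^{-j'}$ (the gradient components have transverse derivatives of order $1$ away from degeneracies; at isolated degeneracies the local area is even $\lesssim 2^{-2j'}$), from which a standard grid-discretization gives $|\mathcal{K}_j(j')| \lesssim 2^j + 2^{2j-j'}$, matching $2^{j+(j-j')_+}$.
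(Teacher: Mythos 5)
Your overall plan---fix $s$, estimate a one-dimensional $t$-set, discretize, then sum over $s$---is the same as the paper's, and your Step~1 (a near-diagonal lower bound on $\|\nabla\phi\|_\infty$, making the estimate trivial for small $j'$) is a clean observation that the paper delegates to the separate Lemma~\ref{teo:farfield}. However, the way you carry out the 1D estimate diverges from the paper and creates an artificial obstacle, and the fix you propose for that obstacle has real gaps.

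The key structural difference: the paper uses \emph{both} components of $\nabla\phi$ from the outset. Fixing $s$, the first condition $|\dot\x(s)\cdot\r|\leq 2^{-j'}$ confines $\x(t)$ to a narrow cone $\Gamma$ of opening $O(2^{-j'})$ with apex $\x(s)$ and axis along the normal $\n(s)$; the second condition $|\dot\x(t)\cdot\r|\leq 2^{-j'}$ forces the curve to cross chords in $\Gamma$ at near-right angles. Together these give a finite number of transversal crossings, each of arclength $O(2^{-j'})$, hence $|\{t\}|\lesssim 2^{-j'}$ uniformly in $s$. You, on the other hand, define $E_s$ using only $|\pd_s\phi|\leq 2^{-j'}$, and then discover that at a degenerate zero $t_*$ of $h_s$ (where $h_s'(t_*)=-\dot\x(t_*)\cdot\dot\x(s)=0$, i.e.\ $\dot\x(t_*)\perp\dot\x(s)$) the one-variable bound degrades. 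But at exactly such a $t_*$ the \emph{second} condition is badly violated: since $h_s(t_*)=0$ makes $\r(s,t_*)\parallel\n(s)$ and $\dot\x(t_*)\perp\dot\x(s)$ makes $\dot\x(t_*)\parallel\n(s)$, we get $|\dot\x(t_*)\cdot\r(s,t_*)|\asymp \|\dot\x(t_*)\|\gtrsim 1 \gg 2^{-j'}$. So a whole neighborhood of $t_*$ is excluded from $\mathcal{K}_j(j')$, and the ``obstacle'' is simply not there once both conditions are imposed. That is the observation you should make instead of switching to a 2D argument.

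The 2D detour you do take is both unnecessary and not sound as written. The joint zero set $\{\nabla\phi=0\}$ is a system of two equations in two unknowns, so generically it is a finite set of points, \emph{not} a finite union of curves; curves arise only at degenerate symmetries (e.g.\ the circle, where the double-normal locus is the antidiagonal $s-t=1/2$). Your phrase ``finite union of smooth curves'' is thus at best a worst case, but the argument deduced from it is also not established: to conclude that $\{\|\nabla\phi\|_\infty\leq\delta\}$ is an $O(\delta)$-neighborhood of the zero set you would need a uniform lower bound on the Hessian of $\phi$ transverse to the zero set, which you have not addressed. Finally, the parenthetical ``at isolated degeneracies the local area is even $\lesssim 2^{-2j'}$'' is backwards: at points where the Jacobian of $\nabla\phi$ drops rank, the sublevel set grows \emph{fatter}, not thinner. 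I would recommend abandoning the 2D step and instead importing the exclusion of degenerate $t_*$ directly into Step~2, which brings the proof in line with the paper's cone argument.
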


\begin{proof}

Let $\r = (\x(s)-\x(t))/ \| \x(s)-\x(t) \|$; the gradient of the phase is $\nabla \phi(s,t) = (\dot\x(s) \cdot r, - \dot\x(t) \cdot \r)$. The condition $\| \nabla \phi(s,t) \|_\infty \leq 2^{-j'}$, i.e.  
\[
| \dot\x(s) \cdot \r | \leq 2^{-j'} \qquad \mbox{and} \qquad  | \dot\x(t) \cdot \r | \leq 2^{-j'},
\]
is for large $j'$ an almost-perpendicularity condition between tangent vectors to the curve $\Omega_\alpha$ and the chord joigning $\x(s)$ and $\x(t)$.

Now fix $s = 2^{-j} q_1$, and let $\n(s)$ be either normal vector to $\Omega_\alpha$ at $\x(s)$. Let $\theta$ be the angle between $\r$ and $\n(s)$, such that $| \dot\x(s) \cdot \r | = \| \dot\x(s) \| \, | \sin \theta |$. Since the parametrization is nondegenerate, the first condition $| \dot\x(s) \cdot \r | \leq 2^{-j'}$ implies $\theta \leq C \, 2^{-j'}$ for some adequately large $C > 0$.

Consider therefore a cone $\Gamma$ with apex at $\x(s)$, axis $\n(s)$, and opening $\theta \leq C \, 2^{-j'}$. The second condition $| \dot\x(t) \cdot \r | \leq 2^{-j'}$ is satisfied only if the curve $\Omega_\alpha$ intersects a chord inside the cone at a near-right angle, and as a consequence, every chord inside the cone at a near-right angle, differing from $\pi/2$ by a $O(2^{-j'})$. Because $\Omega_\alpha$ has finite length, bounded curvature, and obeys the geometric regularity property (\ref{eq:geomreg}), there can only be a finite number of such intersections. The total length of $\Omega_\alpha \cap \Gamma$ is therefore a $O(2^{-j'})$.

Since the points $\x(2^{-j} q_2)$ are a distance $C \, 2^{-j}$ apart from each other, there are at most $O(\max (1, 2^{j-j'}))$ points indexed by $q_2$ that obey the two almost-orthogonality conditions, which can be written as $O(2^{(j-j')_+})$. Since $q_1$ takes on $O(2^j)$ values, the total number of couples $(q_1, q_2)$ obeying the conditions is $O(2^j \, 2^{(j-j')_+})$.

\end{proof}

We will also need the observation that near-stationary-phase points can only occur far away from the diagonal.

\begin{lemma}\label{teo:farfield}
As before, let $d(s,t) = | e^{2 \pi i s} - e^{2 \pi i t} |$. There exist two constants $C_1, C_2 >0$ such that, if $d(s,t) \leq C_1$, then
\[
\| \nabla \phi(s,t) \|_\infty \geq C_2.
\]
\end{lemma}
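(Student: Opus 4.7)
The plan is to exploit the Taylor expansion of $\mathbf{x}$ near the diagonal. When $d(s,t)$ is small, by periodicity of $\mathbf{x}$ we may without loss of generality assume $|s-t|$ is small as a real number (since $d(s,t) \asymp |s-t|$ whenever $s,t$ are close mod 1, and if the minimum distance is attained at $|s-t-n|$ for some nonzero integer $n$, we just translate). So the lemma reduces to the claim: for $|s-t|$ small enough, the chord direction $\mathbf{r}$ is nearly parallel to the tangent vector, which forces $\dot{\mathbf{x}}(s)\cdot\mathbf{r}$ (and similarly $\dot{\mathbf{x}}(t)\cdot\mathbf{r}$) to be bounded away from zero.

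Concretely, I would write
\[
\mathbf{x}(s) - \mathbf{x}(t) = (s-t) \int_0^1 \dot{\mathbf{x}}\bigl(t + \tau(s-t)\bigr) \, d\tau \equiv (s-t) \, \mathbf{v}(s,t),
\]
where $\mathbf{v}(s,t) \to \dot{\mathbf{x}}(t)$ as $s \to t$. Since $\mathbf{x}$ is a $C^\infty$ nondegenerate parametrization, $\|\dot{\mathbf{x}}\|$ is bounded below by some $c_0 > 0$ uniformly, so for $|s-t|$ small enough $\|\mathbf{v}(s,t)\| \geq c_0/2$, and therefore
\[
\mathbf{r} = \frac{\mathbf{x}(s)-\mathbf{x}(t)}{\|\mathbf{x}(s)-\mathbf{x}(t)\|} = \mathrm{sign}(s-t) \cdot \frac{\mathbf{v}(s,t)}{\|\mathbf{v}(s,t)\|}.
\]
Then by continuity of $\dot{\mathbf{x}}$, one has $\dot{\mathbf{x}}(s) = \dot{\mathbf{x}}(t) + O(|s-t|)$ and $\mathbf{v}(s,t) = \dot{\mathbf{x}}(t) + O(|s-t|)$, so
\[
\dot{\mathbf{x}}(s) \cdot \mathbf{r} = \mathrm{sign}(s-t)\, \|\dot{\mathbf{x}}(t)\| + O(|s-t|),
\]
which is bounded below by $c_0/2$ once $|s-t|$ is small enough. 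The same reasoning applies to $\dot{\mathbf{x}}(t)\cdot\mathbf{r}$, and since
\[
\nabla \phi(s,t) = \bigl(\dot{\mathbf{x}}(s)\cdot\mathbf{r}, \, -\dot{\mathbf{x}}(t)\cdot \mathbf{r}\bigr)
\]
as recalled in the proof of Lemma \ref{teo:scaledefect}, we conclude $\|\nabla\phi(s,t)\|_\infty \geq c_0/2$.

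Choosing $C_1$ small enough to reduce $d(s,t) \leq C_1$ to the regime where the Taylor remainder is negligible, and setting $C_2 = c_0/2$, completes the proof. The only point that requires a little care is the reduction from smallness of $d$ to smallness of $|s-t|$ via periodicity: this is where the geometric regularity hypothesis \eqref{eq:geomreg} is used implicitly, to rule out the scenario where two parameter values far apart on $[0,1]$ are nonetheless close in $d$ due to the curve nearly self-intersecting. I do not expect any serious obstacle; the content is essentially that a smooth embedded curve has its chord direction converging to its tangent direction as the chord shrinks.
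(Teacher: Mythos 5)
Your argument is correct and takes essentially the same route as the paper's: both rest on the observation that near the diagonal the chord direction $\mathbf{r}$ aligns with the tangent direction, forcing $|\dot{\mathbf{x}}(s)\cdot\mathbf{r}|$ and $|\dot{\mathbf{x}}(t)\cdot\mathbf{r}|$ to remain bounded below; the paper phrases this via the angle $\theta_s$ between chord and tangent (bounding $|\theta_s| \lesssim d(s,t)$ and invoking $|\cos\theta_s|\geq 1/2$), while you phrase it via the mean-value representation $\mathbf{x}(s)-\mathbf{x}(t) = (s-t)\mathbf{v}(s,t)$, which is the same fact in a different dress. One small correction to your closing remark: the passage from ``$d(s,t)$ small'' to ``$|s-t|$ small mod $1$'' does not rely, implicitly or otherwise, on the geometric regularity hypothesis (\ref{eq:geomreg}) — the quantity $d(s,t)=|e^{2\pi i s}-e^{2\pi i t}|$ is the chordal distance between the \emph{parameters} on the unit circle, not the Euclidean distance $\|\mathbf{x}(s)-\mathbf{x}(t)\|$ between points on the scatterer, so smallness of $d(s,t)$ means by definition that $s$ and $t$ are close mod $1$, and no appeal to the non-self-intersection of the curve is required for this lemma.
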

\begin{proof}
As previously,
\[
\| \nabla \phi(s,t) \|_\infty = \min (| \dot\x(s) \cdot \r |, | \dot\x(t) \cdot \r |),
\]
and we write $| \dot\x(s) \cdot \r |$ as $\| \dot\x(s) \| \, |\cos(\theta_s)|$, where $\theta_s$ is the angle between the chord $(\x(s),\x(t))$ and the tangent vector $\dot\x(s)$. This angle obeys $|\theta_s| \lesssim d(s,t)|$, hence the cosine factor is greater than 1/2 as long as $d(s,t) \leq C_1$ for some adequate $C_1$. The factor $\| \dot\x(s) \|$ is also bounded away from zero by regularity of the parametrization. The same argument can be made for $| \dot\x(t) \cdot \r |$.
\end{proof}

\subsection{Nondiagonal kernel fragments: decay of individual coefficients}\label{sec:nondiag}

Within nondiagonal squares, $d(s,t) \gtrsim 1/k$ and $k \phi(s,t) \gtrsim 1$, so Lemma \ref{teo:hankel1} asserts that $K$ can be written as
\[
K(s,t) = e^{ik \phi(s,t)} a(k \phi(s,t), s, t),
\]
where $\phi(s,t) = \| \mathbf{x}(s) - \mathbf{x}(t) \|$ and the dependence of $a$ on $k$ is mild in comparison to that of $e^{ik \phi}$; 
\[
| \frac{d^{n}}{d \phi^n} a(k \phi(s,t),s,t)| \leq C_n \frac{1}{\sqrt{k \phi(s,t)}} \phi(s,t)^{-n}.
\]
The presence of additional arguments $s$ and $t$ is needed to account for factors such as the Jacobian $\| \x'(t) \|$; all the derivatives of these factors are $O(1)$ by assumption. Therefore, the chain rule yields
\begin{equation}\label{eq:bound-ak}
| \frac{d^{\alpha_1}}{d s^{\alpha_1}} \frac{d^{\alpha_2}}{d t^{\alpha_2}} a(k \phi(s,t),s,t)| \leq C_\alpha \frac{1}{\sqrt{k \phi(s,t)}} \phi(s,t)^{-|\alpha|}, \qquad\qquad \phi(s,t) \lesssim 1.
\end{equation}


Fix $j > 0$ and $Q \in \mathcal{Q}_j$. We seek a good bound on
\[
\< K_Q, \vf_{j,\m,\n} \> = \int_{3Q} w_Q(s,t) \, a(k\phi(s,t),s,t) \, e^{ik \phi(s,t)} e^{- i 2^j \m \cdot (s,t)} 
\]
\[
\qquad \qquad \times \, 2^{j} \, \vf_{(j,\m)}(2^j s - n_1, 2^j t - n_2) \, ds \, dt,
\]
where $\vf_{(j,\m)}$ has been introduced in equation (\ref{eq:vfjm1}). Without loss of generality, we perform a translation to choose the coordinates $s$ and $t$ such that $x_Q = 0$ and $w_Q(s,t) = w(2^j s, 2^j t)$.

A first bound estimating the decay in $\n$ can be obtained by using 1) the almost-exponential decay (\ref{eq:vfjm2}) for $\vf_{(j,\m)}$, 2) the estimate $\| w_Q \|_{L^1} \lesssim 2^{-2j}$ that follows from $| 3Q | \lesssim 2^{-2j}$, and 3) an $L^\infty$ bound for the rest of the integrand, disregarding the oscillations. The result is
\[
| \< K_Q, \vf_{j,\m,\n} \> | \leq C_M \, 2^{-j} \, \sup_{(s,t) \in 3Q} \left[ (k \phi(s,t))^{-1/2} \right] \,(1 + \| \n \|)^{-M}, \qquad \forall \, M > 0.
\]
The size of the first-order Taylor remainder of $k \phi(s,t)$ over $3Q$ is $O(2^{j})$ times smaller that the value of $k \phi(x_Q)$ itself, so we may evaluate $\phi$ at $x_Q$ at the expense of a multiplicative constant in the estimate. We get
\begin{equation}\label{eq:decay-n}
| \< K_Q, \vf_{j,\m,\n} \> | \leq C_M \, 2^{-j} \, (k \phi(x_Q))^{-1/2} \,(1 + \| \n \|)^{-M}, \qquad \forall \, M > 0.
\end{equation}

Capturing the decay in $\m$, however, requires integrations by parts. Heuristically, the objective is to show that the wave atom coefficients decay almost exponentially in $\m$, with a length scale of 1 in all directions (in units of $\m$), independently of $j$---at least in the representative case $j \simeq \frac{1}{2} \log_2 k$. To this end let us introduce the self-adjoint differential operator
\[
L = \frac{I - \beta \Delta_{(s,t)} - i \beta k \, ( \Delta \phi(s,t) )}{1 + \beta \| k \nabla \phi(s,t) - 2^j \m \|^2},
\]
with
\[
\beta = \frac{1}{\max ( 2^{-2j} k^2, 2^{2j} )}.
\]
We see that $L$ leaves the exponential $\exp{i [k \nabla \phi(s,t) - 2^j \m \cdot (s,t)]}$ unchanged, hence we introduce $M$ copies of $L$, and integrate by parts in $s$ and $t$ to pass the differentiations to the non-oscillatory factors. The scaling parameter $\beta$ has been chosen such that the repeated action of $L$ on the rest of the integrand introduces powers of $1/(1 + \beta \| k \nabla \phi(s,t) - 2^j \m \|^2)$, but otherwise only worsens the bound by a constant independent of $\mu = (j, \m, \n)$. Indeed, $\beta \leq 2^{-2j}$, and
\begin{itemize}
\item the action of each derivative on $w_Q$ or $\vf_{(j,\m)}$ produces a factor $2^j$ balanced by $\sqrt{\beta}$;
\item the action of each derivative on $a$ produces a factor $1/\phi(s,t)$, which by equation (\ref{eq:geomreg}) is comparable to $1/d(s,t)$. Since we are in the presence of nondiagonal squares, $1/d(s,t) \lesssim \min( 2^{j}, k) \leq 2^{j}$. Again, each derivative produces a factor $2^j$, which is balanced by $\sqrt{\beta}$. Note that the leading factor $1/\sqrt{k\phi}$ in the bound (\ref{eq:bound-ak}) is harmless since it is carried through the differentiations.
\end{itemize}
It is then tedious but straightforward to combine these observations and conclude that, for all $M > 0$,
\begin{align*}
&| L^M \left[ w(2^j s, 2^j t) \, a(k\phi(s,t),s,t) \, \vf_{(j,\m)}(2^j s - n_1, 2^j t - n_2) \right] | \leq \\
&\qquad\qquad\qquad C_M \; \frac{1}{\sqrt{k \phi(s,t)}} \; \frac{1}{( 1 + \beta \| k \nabla \phi(s,t) - 2^j \m \|^2)^M}.
\end{align*}

Since $L$ is a differential operator, the support of the integrand remains $3Q$ regardless of $M$, hence we still get a factor $|3Q| \sim 2^{-2j}$ from the integral over $s$ and $t$. With the $L^2$ normalization factor $2^j$ coming from equation (\ref{eq:vfjm1}), the overall dependence on scale is $2^{-j}$. The resulting bound is
\begin{equation}\label{eq:decay-m}
| \< K_Q, \vf_{j,\m,\n} \> | \leq C_M \, 2^{-j} \, \sup_{(s,t) \in 3Q} \left[ (k\phi(s,t))^{-1/2} (1 + \beta \| k \nabla \phi(s,t) - 2^j \m \|^2)^{-M} \right],\qquad \forall \, M > 0.
\end{equation}
The second factor inside the square brackets can be written as
\[
\left(1 + \| \frac{k 2^{-j} \nabla \phi(s,t) - \m}{2^{-j} \beta^{-1/2}} \|^2 \right)^{-M},
\]
showing that in $m$-space, it is a fast-decaying bump centered at $k 2^{-j} \nabla \phi(s,t)$ and of characteristic width $2^{-j} \beta^{-1/2}$. Over the set $3Q$, we have the estimate $ | k 2^{-j} \nabla \phi(s,t) - k 2^{-j} \nabla \phi(x_Q) |  = O(k 2^{-2j})$. The quantity $k 2^{-2j}$ is in all cases less than the length scale $2^{-j} \beta^{-1/2}$ (which is why we could not simply have taken $\beta = 2^{-2j}$), so we may replace $\nabla \phi(s,t)$ by $\nabla \phi(x_Q)$ in the expression of the bump, at the expense of a multiplicative constant depending only on $M$. 

We have also seen earlier that $(k \phi(s,t))^{-1/2}$ can safely be replaced by $(k \phi(x_Q))^{-1/2}$ in the region $d(s,t) \gtrsim 1$, at the expense of another multiplicative constant. With these observations, we can take the geometric mean of (\ref{eq:decay-n}) and (\ref{eq:decay-m}) and obtain the central bound
\begin{equation}\label{eq:decay-nondiag}
| \< K_Q, \vf_{j,\m,\n} \> | \leq C_M \, 2^{-j} \, (k \phi(x_Q))^{-1/2} \, (1 + \beta \| k \nabla \phi(x_Q) - 2^j \m \|^2)^{-M} \, (1 + \| \n \|)^{-M},
\end{equation}
for all $M > 0$.

\subsection{Nondiagonal kernel fragments: $\ell_p$ summation}

The expression just obtained can be used to show $\ell_p$ summability and verify proper growth as a function of $k$. Only the case $p \leq 1$ is interesting and treated in this section. We tackle the different sums in the right-hand-side of (\ref{eq:ptriangle-KQ}) in the order as written, from right to left.
\begin{itemize}
\item The sum over $\n$ is readily seen to contribute a multiplicative constant independent of the other parameters $j, Q$, and $\m$.
\item Consider the sum over $\m$, and pull out the factor $2^{-jp} (k \phi(x_Q))^{-p/2}$. (We will not worry about this factor until we treat the sum over $Q$.) The range of values for $\m$ is an annulus $\| \m \|_\infty \asymp 2^{j}$, so we can compare the sum over $\m$ to the integral
\[
\int_{C_j} (1 + \beta \| k \nabla \phi (x_Q) - 2^j \x \|^2)^{-Mp} d \x, 
\]
where $C_j = \{ \x \in \R^2: C_1 2^j \leq \| \x \|_\infty \leq C_2 2^j \}$ for some $C_1, C_2 > 0$. In what follows, take $M$ sufficiently large so that, say, $Mp \geq 5$. Two cases need to be considered, corresponding to $2^{2j} \leq k$ (large scales), and $2^{2j} > k$ (small scales).

\begin{itemize}
\item If $2^{2j} \leq k$, then $\beta = 2^{2j} k^{-2}$. It will be sufficient to consider only the upper bound for $\| x \|_\infty$, whence
we have the bound
\[
\int_{\| \x \|_\infty \leq C 2^j} \left( 1 + \| \frac{ 2^{-j} k  \nabla \phi (x_Q) - \x}{2^{-2j} k} \|^2 \right)^{-Mp} d \x.
\]
With Lemma \ref{teo:scaledefect} in mind, we introduce the \emph{scale defect} $j_Q'$ as the unique integer such that
\[
\frac{1}{2} 2^{-j'_Q} < \| \nabla \phi(x_Q) \| \leq 2^{-j'_Q}.
\]
The integrand is a bump that essentially lies outside of the region of integration as soon as $k 2^{-(j+j'_Q)} \gtrsim 2^j$. 

More precisely, observe that 
\[
\sup_{\x: \| \x \|_\infty \leq C 2^j} \left( 1 + \| \frac{ 2^{-j} k  \nabla \phi (x_Q) - \x}{2^{-2j} k} \|^2 \right)^{-Mp} \leq C \left( 1+ 2^j (2^{-j'_Q} - C' k^{-1} 2^{2j})_+ \right)^{-2Mp}
\]
hence the integral is bounded by a first expression,
\begin{equation}\label{eq:bumpbound1}
C \, 2^{2j} \left( 1+ 2^j (2^{-j'_Q} - C k^{-1} 2^{2j})_+ \right)^{-2Mp}.
\end{equation}

A second bound can be obtained by letting $\x' = \x - 2^{-j} k  \nabla \phi (x_Q)$ and extending the region of integration to the complement of a square in $\x'$, of the form
\[
\| \x' \|_\infty \geq s_{jQ} = \left( \frac{1}{2} k 2^{-(j + j_Q')} - C 2^j \right).
\]
If $s_{jQ} \leq k 2^{-2j}$, we might as well put it to zero and obtain the bound $C (k^2 2^{-2j})^2$ for the integral. If $s_{jQ} > k 2^{-2j}$, the integrand can be made homogeneous in $\x$ and the integral bounded by
\[
\int_{r > s_{jQ}} \left( \frac{r}{k 2^{-2j}} \right)^{-2Mp} \, r dr \lesssim (s_{jQ})^2 \, \left( \frac{s_{jQ}}{k 2^{-2j}} \right)^{-2Mp} \leq (k 2^{-2j})^2 \, \left( \frac{s_{jQ}}{k 2^{-2j}} \right)^{-2Mp+2}
\]
Now uniformly over $s_{jQ}$, the resulting bound is
\begin{equation}\label{eq:bumpbound2}
C \, (k 2^{-2j})^2 \, \left( 1 + 2^j (2^{-j'_Q} - k^{-1} 2^{2j})_+ \right)^{-2Mp+2}.
\end{equation}
The minimum of (\ref{eq:bumpbound1}) and (\ref{eq:bumpbound2}) is
\begin{equation}\label{eq:boundm1}
C \, \min \left( 2^{2j}, k^2 2^{-4j} \right) \, \left( 1 + 2^j (2^{-j'_Q} - k^{-1} 2^{2j})_+ \right)^{-2Mp+2}.
\end{equation}

\item If $k \leq 2^{2j}$, then $\beta = 2^{-2j}$. This time we will only consider the lower bound for $\| \x \|_\infty$, and write
\[
\int_{\| \x \|_\infty \geq C 2^j} (1 + \| 2^{-j} k \nabla \phi (x_Q) -  \x \|^2)^{-Mp} d \x.
\]
Since $\| \nabla \phi(x_Q) \|$ is a $O(1)$ and $2^{-j} k \leq 2^j$, there exists a value $j^* \leq \frac{1}{2} \log_2 k + C$ such that for all $j \geq j^*$, the center of the bump is inside the square $\| \x \|_\infty \leq C 2^j$ (the constant $C$ changes from expression to expression.) When this occurs, we can let $\x' = \x - 2^{-j} k  \nabla \phi (x_Q)$ as before and consider the integral outside of a smaller square and bound
\begin{equation}\label{eq:boundm2}
\int_{\| \x \|_\infty \geq C 2^j} (1 + \| \x \|^2)^{-Mp} d \x \leq 2^{-2j(Mp-2)}, \qquad j \geq j^*.
\end{equation}
For the few values of $j$ such that $\frac{1}{2} \log_2 k \leq j \leq j^*$, we recover the previous estimate, namely 
$C \, (k 2^{-2j})^2$, which is a $O(1)$.

\end{itemize}

\item Consider now the sum over $Q$, and recall that the bounds just obtained need to be multiplied by $2^{-jp} (k \phi(x_Q))^{-p/2}$. Pull out the factor $2^{-jp}$ one more time. Again, we need to separately consider $2^{2j} \leq k$ (large scales) and $2^{2j} \geq k$ (small scales). For small scales,  the bound (\ref{eq:boundm2}) is uniform in $Q$, hence the sum over $Q \in \mathcal{Q}_j$ simply contributes a factor $2^{2j}$. 

For large scales, the strategy is to split the sum over $Q$ into a \emph{near-field contribution}, for which $d(s,t) \leq C_1$ in the sense of Lemma \ref{teo:farfield}, and a \emph{far-field contribution}, for which stationary phase points must be handled adequately. The terms in the far-field sum is then further broken down into groups corresponding to a same value of the scale defect, which Lemma \ref{teo:scaledefect} helps identify. Schematically,
\[
\sum_{Q \in \mathcal{Q}_j} \; = \; \sum_{Q \in \mbox{ near-field}} \; + \; \sum_{j' > 0}  \; \; \left[ \sum_{Q: \mbox{ scale defect} = j'} \right].
\]

Consider the two regions separately.
\begin{itemize}
\item \emph{Near-field}. In this region, $\phi(x_Q)$ may be as small as $1/k$, hence we estimate $(k \phi(x_Q))^{-p/2} \lesssim 1$. By Lemma \ref{teo:farfield}, the scale defect $j'_Q$ is bounded by a constant, hence the bound (\ref{eq:boundm1}) becomes
\begin{equation}\label{eq:interm-bound}
C \, \min \left( 2^{2j}, k^2 2^{-4j} \right) \, \left( 1 + 2^j (1- k^{-1} 2^{2j})_+ \right)^{-2Mp+2}.
\end{equation}

We claim that this quantity is always less than a constant independent of $j$ and $k$. Indeed, if $j$ is so large that $2^{2j} k^{-1} \geq 1/2$, then $k^2 2^{-4j} \leq 4$, and it suffices to use the trivial minoration $1 + 2^j (1- k^{-1} 2^{2j})_+ \geq 1$. If on the other hand  $2^{2j} k^{-1} < 1/2$, then we have $1 + 2^j (1- k^{-1} 2^{2j})_+ \geq \frac{1}{2} 2^j$, which implies that (\ref{eq:interm-bound}) is bounded by
\[
C \, \min \left( 2^{2j}, k^2 2^{-4j} \right) \, 2^{-2j(Mp-2)} \leq C \, 2^{-2j(-1 + Mp - 2)} \leq 1,
\]
because we chose $Mp \geq 5$. The sum over $Q$ then contributes a factor proportional to the number of nondiagonal squares, i.e., $2^{2j}$.

\item \emph{Far-field}. The leading factor $(k \phi(x_Q))^{-p/2}$ now contributes a factor $k^{-p/2}$, since $\phi(x_Q) \geq C$ in the far field. For the sum over $Q$, we use equation (\ref{eq:boundm1}) one more time and write
\[
C \, \min \left( 2^{2j}, k^2 2^{-4j} \right) \; \sum_{Q \in \mbox{ far-field}} \; \left( 1 + 2^j (2^{-j'_Q} - k^{-1} 2^{2j})_+ \right)^{-Mp+2}.
\]
For each $Q$, find the closest integer $j' \leq j$ to $j'_Q$. As long as $j' < j$, Lemma \ref{teo:scaledefect} asserts that the number of terms comparable to $\left( 1 + 2^j (2^{-j'} - k^{-1} 2^{2j})_+ \right)^{-Mp+2}$ is a $O(2^{2 j -j'})$. The endpoint $j' = j$ receives the contribution of arbitrary large $j'_Q$, meaning terms that can be as large as a $O(1)$; however by Lemma \ref{teo:scaledefect} there can only be $O(2^j)$ such terms. After indexing terms by $j'$ in place of $Q$, we get the bound
\[
C \, \min \left( 2^{2j}, k^2 2^{-4j} \right) \left[ \, 2^j +  \sum_{-C \leq j' < j} 2^{2j - j'} \, \left( 1 + 2^j (2^{-j'} - k^{-1} 2^{2j})_+ \right)^{-Mp+2} \right].
\]
It is easy to see that the summand peaks for $j'$ near $j_0 = \min (j,  -2j + \log_2 k)$; it decreases geometrically for $j \leq j_0$ because of the factor in brackets, and decreases geometrically for $j \geq j_0$ because of the factor $2^{2j - j'}$. The result is a bound
\[
C \, \min \left( 2^{2j}, k^2 2^{-4j} \right) \,  \max (2^{4j} k^{-1}, 2^j)  = C \, \min (k, 2^{3j}).
\]

\end{itemize}

\item What remains after gathering the various bounds is a constant $C_{M,p}$ times

\begin{align*}
&\sum_{j \leq \frac{1}{2} \log_2 k + C} 2^{-jp} \, 2^{2j} \qquad\qquad\qquad\qquad\quad\; \mbox{(near-field)} \\
+ &\sum_{j \leq \frac{1}{2} \log_2 k + C} k^{-p/2} 2^{-jp} \min(k, 2^{3j}) \qquad\qquad \mbox{(far-field)} \\
+ &\sum_{j > \frac{1}{2} \log_2 k + C} 2^{2j} 2^{-jp} 2^{-2j(Mp - 2)}. \qquad\qquad \mbox{(small scales)}
\end{align*}

The near-field contribution sums up to $C_p \, k^{1-p/2}$ as soon as $p < 2$. The far field contribution is bounded by
\begin{align*}
&C_p \, k^{-p/2} \left[ \sum_{j \leq \frac{1}{3} \log_2 k} 2^{j(3-p)} + k \sum_{j > \frac{1}{3} \log_2 k} 2^{-jp} \right] \\
&\qquad \leq C_p \, k^{-p/2} \, k^{1-p/3} \leq C_p \, k^{1-p/2}.
\end{align*}
With the choice $Mp \geq 5$, the contribution of ``small scales'' is negligible in contrast to the first two terms. $O(k^{1-p/2})$ is the desired growth rate in $k$, compatible with equation (\ref{eq:Kmu-ellp1}). This concludes the part of the proof related to nondiagonal squares.

\end{itemize}

\subsection{Diagonal kernel fragments: decay of individual coefficients}

It is now assumed that the dyadic square $Q$ overlaps with the diagonal strip $S = \{ (s,t): d(s,t) \lesssim 1/k \}$. Because of the singularity of the kernel at $s=t$, the integrations by parts cannot proceed as before. Inside $S$, the smoothness of the kernel is governed by the case $kx \lesssim 1$ in Lemma \ref{teo:hankel1}.

Further complications arise, depending on the value of the scale $j$. 
\begin{itemize}
\item At scales $j \leq \log_2 k + C$, a square $Q$ intersecting with $S$ is not entirely contained in $S$; in fact, a large portion of it lies in the nondiagonal portion $d(s,t) \gtrsim 1/k$. We call this portion (two triangles) the \emph{off-strip contribution}. There are $O(2^j)$ such triangles. 
\item At scales $j \geq \log_2 k + C$, some squares may be contained inside the strip $S$ without intersecting the diagonal $s=t$. These squares make up the \emph{regular on-strip contribution}; there are $O(2^{2j} k^{-1})$ such squares.
\item The remaining $O(2^j)$ squares or portions thereof, overlapping with the diagonal $s=t$, make up the \emph{singular on-strip contribution}.
\end{itemize}

In order to smoothly cut off the strip $S$ from dyadic squares, introduce $\sigma = s-t$ (defined modulo 1 in $[0,1]$) and $\tau = s+t$. By symmetry of the problem, one can consider the triangle $\{(s,t) \in Q: s \geq t \}$ and still call it $Q$, without loss of generality. We can therefore focus on $\sigma > 0$. If we properly select the coset relative to the modulo operation, we can also take $(\sigma, \tau)$ to smoothly parametrize the triangle $Q$. With these choices, the diagonal strip is $S = \{ (\sigma, \tau): \sigma \lesssim 1/k \}.$ Consider now a smooth indicator $\rho(k \sigma)$ where $\rho$ is a $C^\infty$ positive function obeying
\[
\rho(x) = \left\{ \begin{array}{ll}
         1 & \mbox{if $0 \leq x \leq 1$};\\
         0 & \mbox{if $x \geq 2$}.\end{array} \right. 
\]
Multiplying the integrand in (\ref{eq:coeff-diag}) by $\rho(k \sigma)$ gives the on-strip contribution (regular and singular); multiplying it by  $1 - \rho(k \sigma)$ gives the off-strip contribution. These cases are treated separately.

\begin{itemize}
\item \emph{Off-strip contribution.} Over the off-strip region we have $k \sigma \gtrsim 1$ hence $k \phi(s,t) \gtrsim 1$ by 
equation (\ref{eq:geomreg}), so that the case $kx \gtrsim 1$ of Lemma \ref{teo:hankel1} applies there. The analysis of the coefficient decay in $\n$ is the same as in the previous section, so we omit it here.

As far as analysis of the decay in $\m$, we are back in the setting of the analysis of Section \ref{sec:nondiag}, except for the factor $1-\rho(k\sigma)$ that prevents the same scheme of integrations by parts in $\sigma$. Each derivative of $\rho(k\sigma)$ would produce an unacceptably large factor $k$. The smoothness in $\tau$ is however unaffected, which permits to carry over the analysis of Section \ref{sec:nondiag} with integrations by parts in $\tau$ only. The direction of increasing $\tau$ in the $\m$ plane is $\mathbf{e}_\tau = (1,1)$, to which corresponds the decomposition $2 \, \m \cdot (s,t) = (m_1 + m_2)\tau + (m_1 - m_2) \sigma$. Since $\nabla \phi(0) \equiv \lim_{\sigma \to 0^+} \nabla \phi$ points in the direction of $\sigma$, we have $\mathbf{e}_\tau \cdot \nabla \phi(0) = 0$, and we can use
\[
L_0 = \frac{I - \beta \frac{\pd^2}{\pd \tau^2} - i \beta k \, ( \frac{\pd^2 \phi}{\pd \tau^2} )}{1 + \beta 2^{2j} \left( \frac{m_1 + m_2}{2} \right)^2},
\]
to generate the repeated integrations by parts, where $\beta$ is the same as previously. Notice that the amplitude is uniformly bounded, since $kx \gtrsim 1$ in the off-strip region. The result is a bound that involves $m_1 + m_2$ only. With the contribution of the decay in $\n$, the off-strip coefficient estimate is
\begin{equation}\label{eq:offstrip}
| \< K_Q, (1-\rho(k\sigma)) \vf_{j,\m,\n} \> | \leq C_M \, 2^{-j} \, ( 1+ \beta 2^{2j} ( m_1 + m_2 )^2)^{-M} (1 + \| \n \|)^{-M},
\end{equation}
for all $M > 0$, and only for scales obeying $2^{2j} \lesssim k$.

\item \emph{On-strip contribution: amplitude estimate.} We now take $k \sigma \lesssim 1$. The case $kx \lesssim 1$ of Lemma \ref{teo:hankel1} allows to write
\[
K(s,t) = e^{i k \phi(s,t)} a(k \phi(s,t),s,t),
\]
where now the amplitude's smoothness is
\begin{equation}\label{eq:dadphi}
| \frac{d^n a}{d \phi^n}(k \phi,s,t)| \leq C_n \, \phi^{-n}, \qquad \qquad \phi \lesssim 1.
\end{equation}
The partial derivatives of $a$ with respect to the arguments $s$ and $t$ are $O(1)$ and well within the above bound as long as $\phi(s,t) \lesssim 1$. To compute the \emph{total} derivatives with respect to $s$ and $t$, however, it is necessary to contrast smoothness along and across the oscillations, by means of the coordinates $\sigma$ and $\tau$. The value of $\phi(s,t)$ is comparable to the circle distance $d(\sigma,0)$, namely
\[
D d(\sigma,0) \leq \phi \left( \frac{\sigma+\tau}{2} , \frac{\tau-\sigma}{2} \right) \leq \tilde{D} d(\sigma,0).
\]
The first inequality is exactly equation (\ref{eq:geomreg}), the last inequality follows from a Taylor expansion. Since we only consider $\sigma > 0$, we write this property as $\phi \asymp \sigma$. A careful analysis of Taylor remainders shows that the same estimate estimate is true for the $\tau$ derivatives,
\begin{equation}\label{eq:dphidtau}
| \frac{d^n \phi}{d \tau^n} \left( \frac{\sigma+\tau}{2}, \frac{\tau-\sigma}{2}\right) | \leq C_n \, \sigma, \qquad\qquad \sigma \ne 0,
\end{equation}
while the $\sigma$ derivatives do not yield any gain: 
\begin{equation}\label{eq:dphidsigma}
| \frac{d^n \phi}{d \sigma^n} \left( \frac{\sigma+\tau}{2}, \frac{\tau-\sigma}{2}\right) | \leq C_n, \qquad\qquad \sigma \ne 0.
\end{equation}
The action of the successive $\tau$ derivatives on $a$ through its $\phi$ dependence can be understood from the higher-order analogue of the chain rule, known as the combinatorial Fa\`{a} di Bruno formula:
\[
\left( \frac{d}{d \tau} \right)^n a\left( k \phi \left( \frac{\sigma+\tau}{2} , \frac{\tau-\sigma}{2} \right),s_0,t_0 \right) = \sum_{\pi \in \Pi} \left( \frac{d^{| \pi |} a}{d \phi^{| \pi |}} \right) \cdot \prod_{B \in \pi} k \frac{d^{|B|} \phi}{d \tau^{|B|}}.
\]
In this formula, $\Pi$ is the set of all partitions $\pi$ of $\{ 1, \ldots, n \}$; $|\pi|$ denotes the number of blocks in the partition $\pi$; these blocks are indexed as $B \in \pi$; and $|B|$ denotes the size of the block $B$. Since there are $| \pi |$ factors in the product over $B$, equation (\ref{eq:dphidtau}) reveals that the derivatives of $\phi$ yield a factor $C_n \, \sigma^{|\pi|}$. On the other hand, by equation (\ref{eq:dadphi}), each $\phi$-differentiation of $a$ introduces an inverse power of $\sigma$. The order of the derivative is $| \pi |$, for a contribution of $\sigma^{- |\pi|}$ that exactly cancels the $\sigma^{|\pi|}$ coming from the derivatives of $\phi$. 

This analysis only concerns the dependence of $a$ on $\tau$ via $\phi$. It is easy to apply the multivariable chain rule to see that the depedence of $a$ on $\tau$ via its second and third arguments ($s$ and $t$) does not change the conclusion that any number $n$ of $\tau$ derivatives, $n \geq 1$, keep the amplitude bounded, with bound independent of $j$ and $k$ (but not $n$, of course):
\begin{equation}\label{eq:dadtau}
| \frac{d^n a}{d \tau^n}\left( k \phi\left( \frac{\sigma+\tau}{2} , \frac{\tau-\sigma}{2} \right), \frac{\sigma+\tau}{2} , \frac{\tau-\sigma}{2} \right) | \leq C_n, \qquad\qquad n \geq 1, \; \sigma \ne 0.
\end{equation}

There are no factors to gain in the $\sigma$ derivatives of the phase, hence the same analysis yields
\begin{equation}\label{eq:dadsigma}
| \frac{d^n a}{d \sigma^n}\left( k \phi\left( \frac{\sigma+\tau}{2} , \frac{\tau-\sigma}{2} \right), \frac{\sigma+\tau}{2} , \frac{\tau-\sigma}{2} \right) | \leq C_n \, \phi^{-n}, \qquad\qquad \phi \lesssim 1, \; n \geq 1, \; \sigma \ne 0.
\end{equation}

We are now equipped to study the coefficient
\[
\< K_Q, \vf_{j,\m,\n} \> = \int_{3Q} w(2^j s,2^j t) e^{i k \phi(s,t)} a(k\phi(s,t),s,t) \, e^{- i 2^j \m \cdot (s,t)}
\]
\begin{equation}\label{eq:coeff-diag}
\qquad \qquad \qquad \times \, 2^{j} \, \vf_{(j,\m)}(2^j s - n_1, 2^j t - n_2) \, ds \, dt.
\end{equation}





\item \emph{Regular on-strip contribution}

For regular on-strip squares, i.e., those squares at very small scales $j \geq \log_2 k + C$ that intersect with the strip $S$ but not with the diagonal $s=t$, the decay in $\m$ and $\n$ is obtained by a simple argument of integration by parts. Introduce copies of
\[
L_1 = \frac{I - 2^{-2j} \Delta_{(\sigma,\tau)}}{1 + \| \m \|^2},
\]
and integrate by parts in (\ref{eq:coeff-diag}). Each derivative in $\sigma$ acting on the amplitude $a(k\phi,s,t)$ produces a factor $\phi^{-1} \asymp \sigma^{-1}$. Since $Q$ does not intersect with the diagonal, $\sigma \gtrsim 2^{-j}$ hence $\sigma^{-1} \lesssim 2^j$. This factor is balanced by the choice of scaling in the expression of $L_2$. A fortiori, the derivatives in $\tau$ are governed by a stronger estimate and are therefore under control. The derivative in $\tau$ or $\sigma$ acting on $\vf_{(j,\m,\n)}$ do not compromise its super-algebraic decay, hence we gather the same decay in $\n$ as previously. One complication is however the possible logarithmic growth near $\sigma = 0$ of the amplitude $a$ when it is not differentiated. Consider the intermediate bound
\[
| \< K_Q, \rho(k\sigma) \vf_{j,\m,\n} \> | \leq C_M \, 2^{j} \, \left( 1+ \| \m \|^2 \right)^{-M} \, (1 + \| \n \|)^{-M}  \, \int_{3Q}  | a(k \phi(s,t),s,t) | \, ds dt,
\]
for all $M > 0$. If the amplitude were bounded, then the integral would produce a factor $2^{-2j}$ like in the nondiagonal case. Instead, we claim that the integral factor is bounded by $2^{-j} k^{-1}$. In order to see this, consider  the bound
\[
|a(k \phi,s,t)| \leq C \, (1 + |\log(k \phi(s,t))|),
\]
from Lemma \ref{teo:hankel1}. Since log is increasing and $\phi \asymp \sigma$, there exist $C_1, C_2 > 0$ such that
\[
\log ( C_1 k \sigma) \leq \log( k \phi(s,t)) \leq \log( C_2 k \sigma),
\]
hence
\begin{equation*}
|\log (k\phi(s,t))| \leq C + | \log (k \sigma) |, \qquad \mbox{for some } C > 0.
\end{equation*}
This bound does not depend on $\tau$, and since $(s,t) \in 3Q$, $\tau$ ranges over a set of length $O(2^{-j})$. We therefore obtain the bound
\[
 \int_{3Q}  | a(k \phi(s,t),s,t) | \, ds dt  \leq C \, 2^{-j} \times \int_0^{\frac{1}{Ck}} (C + | \log (k \sigma)|) \, d\sigma \leq C \, 2^{-j} k^{-1}.
\]

The final estimate for the regular on-strip contribution is
\begin{equation}\label{eq:onstrip-reg}
| \< K_Q, \rho(k\sigma) \vf_{j,\m,\n} \> | \leq C_M \, k^{-1} \, \left( 1+ \| \m \|^2 \right)^{-M} \, (1 + \| \n \|)^{-M}, \qquad \mbox{(regular on-strip)}
\end{equation}

\item \emph{Singular on-strip contribution}

Let us now consider a dyadic square $Q$ that intersects with the diagonal $s=t$. Heuristically, we cannot expect that the decay length scale of the wave atom coefficients be independent of $j$ in all directions in $\m$: because $Q$ overlaps with, or is close to the diagonal, the decay in the direction $m_1 - m_2$ is much slower than the decay in the direction $m_1 + m_2$. However, the number of diagonal squares is small enough to restore the overall balance at the level of the $\ell_p$ summability criterion.

To quantify the decay in the $m_1 + m_2$ direction, introduce the self-adjoint operator
\[
L_2 = \frac{I - 2^{-2j} \frac{\pd^2 }{\pd \tau^2}}{1 + \left( \frac{m_1 + m_2}{2} \right)^2}.
\]
It leaves the exponential $e^{- i 2^j (m_1 s + m_2 t)}$ invariant. After integrating by parts, the action of $I - 2^{-2j} \frac{\pd^2 }{\pd \tau^2}$ leaves the bound on the rest of the integrand unchanged, because
\begin{itemize}
\item[(i)] $w(2^j s, 2^j t)$ and $\vf_{(j,\m)}(2^j s - n_1, 2^j t - n_2)$ produce a factor $2^j$ when differentiated;
\item[(ii)] differentiating $\vf_{(j,\m)}(2^j s - n_1, 2^j t - n_2)$ does not compromise its decay in $\n$; and
\item[(iii)] $a(k\phi,s,t)$ has a logarithmic singularity, and otherwise becomes uniformly bounded when differentiated in $\tau$, as we have seen. (The presence of the scaling $2^{-2j}$ in $L_1$ is not even needed here.)
\end{itemize}
The integral $\int_{3Q}  | a(k \phi(s,t),s,t) | \, ds dt$ for the amplitude can be bounded by $C \, 2^{-j}  \, \max (2^{-j}, k^{-1})$ as we arued for the regular on-strip squares (here $3Q$ is not necessarily contained in $S$.) The result is a bound
\begin{equation}\label{eq:onstrip-sing}
| \< K_Q, \rho(k\sigma) \vf_{j,\m,\n} \> | \leq C_M \, \max(2^{-j}, k^{-1})  \, \left( 1+ ( m_1 + m_2 )^2 \right)^{-M} \, (1 + \| \n \| )^{-M}, \,  \qquad \mbox{(singular on-strip)}
\end{equation}
for all $M > 0$.

Finally, the decay in $m_1 - m_2$ for those (singular, on-strip) squares that intersect the diagonal cannot proceed as previously. An analysis of coefficients taken individually would be far from sharp, e.g., would not even reproduce $\ell_2$ summability. The proper reasoning involves a \emph{collective bound} on the $\ell_2$ norm of all the wave atom coefficients at a given scale $j > 0$, which correspond to squares $Q$ that intersect with the diagonal. This reasoning is explained in the next section, and gives the bound
\begin{equation}\label{eq:collective}
\sum_{\m, \n} | \< K_Q, \rho(k\sigma) \vf_{j,\m,\n} \> |^2 \leq C \; j^2 2^{-3j}, \qquad Q \in \mathcal{Q}_j \mbox{ and $Q$ intersects the diagonal.}
\end{equation}

The study of $\ell_p$ summability from all these estimates is then treated in Section \ref{sec:diag-ellp}.

\end{itemize}

\subsection{Diagonal kernel fragments: collective decay properties}\label{sec:collective}

The strategy for obtaining (\ref{eq:collective}) is to compare wave atom coefficients to wavelet coefficients, scale by scale. Estimating individual wavelet coefficients is a much tighter way to capture the sparsity of a log singularity than directly through wave atoms. (Wavelets however are not well-adapted for the overwhelming majority of dyadic squares that correspond to $C^\infty$ oscillations.)

Consider two-dimensional compactly supported Daubechies wavelets with one dilation index $j$, built on the principle of multiresolution analysis \cite{Mal}. They are denoted as
\[
\psi^\varepsilon_{j',\n}(s,t) = 2^{j'} \psi^\varepsilon(2^{j'} s - n_1, 2^{j'} t - n_2),
\]
where $\varepsilon = 1,2,3$ indexes the type of the wavelet (HH, HL or LL). The easiest way to define Meyer wavelets in a square it to periodize them at the edges.

Fix $j \leq 0$, consider a function $f(s,t)$ defined in $[0,1]^2$, and consider its wave atom coefficients at a scale $j$. By Plancherel for wave atoms,  there exists an annulus $A_j = \{ (\xi_1, \xi_2) : C_1 2^{2j} \leq \| \xi \|_\infty \leq C_2 2^{2j} \}$ such that
\[
\sum_{\m,\n} | \< f, \vf_{j,\m,\n} \>|^2 \leq \int_{A_j} | \hat{f}(\xi) |^2 \, d\xi.
\]
By Plancherel for wavelets and the properties of Daubechies wavelets \cite{Mal}, there exists $j_0$ such that this $L^2$ energy is for the most part accounted for by the wavelet coefficients at scales $2j - j_0 \leq j' \leq 2j + j_0$, i.e.,
\begin{equation}\label{eq:Plancherel-wavelets}
\frac{1}{2} \int_{A_j} | \hat{f}(\xi) |^2 \, d\xi \leq \sum_{j' \in [2j-j_0, 2j + j_0]} \sum_{\varepsilon,\n} |\< f, \psi^\varepsilon_{j',\n} \>|^2.
\end{equation}
The last two equations show that, collectively in an $\ell_2$ sense, wave atom coefficients at scale $j$ can be controlled by wavelet coefficients at scales neighboring $2j$.

The relevant range of scales for this analysis is $j \geq \frac{1}{2} \log_2 k$. The on-strip region has length $O(2^{-j})$ and width $O(\min(k^{-1}, 2^{-j}))$. Since each wavelet is supported in a square of size $\sim 2^{-2j}$-by-$2^{-2j}$, the number of wavelets that intersect the strip is $O(2^{j} \times 2^{2j} \min(k^{-1},2^{-j})) = O(\min(2^{3j} k^{-1}, 2^{j}))$. Among those, only $O(2^{j})$ correspond to wavelets intersecting with the diagonal $s = t$. The bound on wavelet coefficient depends on their location with respect to the diagonal:

\begin{itemize}
\item \emph{Non-diagonal wavelets}. The wavelet's wave number is $ \sim 2^{j'} \sim 2^{2j}$ and soon becomes much larger than the local wave number $\sim k$ of the oscillations of the kernel, hence a fast decay in $j' \to \infty$. More precisely, fix $Q \in \mathcal{Q}_j$; the coefficient of interest is
\begin{equation*}
\< K_Q \rho(k\sigma), \psi^\varepsilon_{j',\n} \> = \int_{\mbox{supp } \psi^\varepsilon_{j',\n}} a(k\phi(s,t),s,t) e^{ik \phi(s,t)} w(2^j s, 2^j t) 2^{j'} \psi^\varepsilon(2^{j'} s - n_1, 2^{j'} t - n_2) \, ds dt.
\end{equation*}
Because the wavelet has at least one vanishing moment, one may write it either as
\[
\psi^\varepsilon(2^{j'} s - n_1, 2^{j'} t - n_2) = 2^{-j'} \frac{d \tilde{\psi}^\varepsilon}{d s} (2^{j'} s - n_1, 2^{j'} t - n_2), \qquad \varepsilon = \mbox{HL or HH},
\]
or as
\[
\psi^\varepsilon(2^{j'} s - n_1, 2^{j'} t - n_2) = 2^{-j'} \frac{d \tilde{\psi}^\varepsilon}{d t} (2^{j'} s - n_1, 2^{j'} t - n_2), \qquad \varepsilon = \mbox{LH},
\]
where $\tilde{\psi}^\varepsilon$ has the same support as $\psi^\varepsilon$. After integrating by parts in $s$ or $t$, we can 
\begin{itemize}
\item use the bounds (\ref{eq:dadtau}) and (\ref{eq:dadsigma}) on the amplitude;
\item use the bound $\nabla_{(s,t)} e^{ik\phi} = O(k)$;
\item use $\nabla_{(s,t)} w(2^j s, 2^j t) = O(2^j)$;
\item use $|\mbox{supp } \psi^\varepsilon_{j',\n}| \lesssim 2^{-2j'}$ 
\end{itemize}
to conclude that the coefficient obeys
\[
| \< K_Q \rho(k\sigma), \psi^\varepsilon_{j',\n} \> | \leq C_M \, 2^{-j'} \,  2^{-j'} \left( \max( k, 2^j) + \overline{\phi^{-1}} \right),
\]
where $\overline{\phi^{-1}}$ is a notation for the supremum of $\phi^{-1}$ over the support of the wavelet. If we index by the integer $q \geq 1$ the distance between the center of the support of the wavelet to the diagonal, as $\sqrt{2} q 2^{-j'}$, then $\overline{\phi^{-1}} \asymp q^{-1} 2^{j'}$. The bound above becomes $C_M \, 2^{-j'} (q^{-1} + 2^{-j'} \max(k, 2^j))$.

The sum in the right-hand-side of (\ref{eq:Plancherel-wavelets}) is then estimated as follows. As we saw earlier the length of the strip $S \cap Q$ is $O(2^{-j})$, and its width is $\min(1/k, 2^{-j})$. Since the translation step of wavelets is $2^{-j'} \sim 2^{-2j}$, the translation index $\n$ takes on $2^j \times 2^{2j} \min(1/k, 2^{-j})$ values. Hence
\begin{align*}
\sum_{j' \in [2j-j_0, 2j + j_0]} \sum_{\varepsilon,\n} |\< K_Q \rho(k\sigma), \psi^\varepsilon_{j',\n} \>|^2 &\leq C \sum_{\n} |2^{-2j} (q^{-1} + 2^{-2j} k)|^2 \\
&\leq C 2^j \sum_{q \geq 1} (2^{-4j} q^{-2}) + 2^{3j} \min \left( \frac{1}{k}, 2^{-j} \right) \times 2^{-4j} (2^{-2j} \max(k, 2^{j}))^{2} \\
&\leq C 2^{-3j}.  \qquad \qquad \mbox{(because } j \geq \frac{1}{2} \log_2 k + C \mbox{ )}
\end{align*}

\item \emph{Diagonal wavelets}. For wavelets intersecting the diagonal, it will not be necessary to quantify cancellations. By Lemma \ref{teo:hankel1},
\begin{equation}\label{eq:waveletscoeff}
| \< K_Q \rho(k\sigma), \psi^\varepsilon_{j',\n} \> | \leq C \, \int_{\mbox{supp } \psi^\varepsilon_{j',\n}} (1 + | \log (k \sigma) |) \, 2^{j'} \, | \psi^\varepsilon (2^{j'} s - n_1, 2^{j'} t - n_2) | \, ds dt.
\end{equation}
Without loss of generality we can consider $k \sigma < 1/2$ and write
\[
|\log_2(k \sigma)| = - \log_2(k \sigma) = - \log_2 (2^{j'} \sigma) - \log_2 k + j' \leq - \log_2 (2^{j'} \sigma) + j'.
\]
Since $\log$ is integrable near the origin, and $| \mbox{supp } \psi^\varepsilon_{j',\n} | \lesssim 2^{-2j'}$, the contribution due to $- \log_2 (2^{j'} \sigma)$ is a $O(2^{-j'})$. The contribution of the lone $j'$, on the other hand, is a $O(j' 2^{-j'})$.

There are $O(2^j)$ diagonal wavelets, each with coefficients $O(j' 2^{-j'}) = O(j 2^{-2j})$, hence the sum of their squares in the range $2j - j_0 \leq j' \leq 2j + j_0$ is $O(2^j \times (j2^{-2j})^2) = O(j^2 2^{-3j})$. 

\end{itemize}

As $j \to \infty$, the contribution of diagonal wavelets manifestly dominates that of nondiagonal wavelets, and we have shown that the resulting estimate is (\ref{eq:collective}).

\subsection{Diagonal kernel fragments: $\ell_p$ summation}\label{sec:diag-ellp}

Let us conclude by calculating the growth of $\sum_j \sum_{Q \in \mathcal{Q}_j} \sum_{\mn} | \< K_Q, \vf_{j,\m,\n} \> |^p$ in the parameter $k$, for those dyadic squares that intersect the strip $k \sigma \lesssim 1$. We start by letting $p \leq 1$.

Consider each contribution separately.
\begin{itemize}

\item \emph{Off-strip contribution}. Recall that $j \leq \frac{1}{2} \log_2 k + C$ in this case. The reasoning entirely parallels that of the previous section and we encourage the reader to focus on the discrepancies.  First, the sum over $\n$ yields a harmless constant factor. Second, use (\ref{eq:offstrip}) and drag the factor $2^{-jp}$ out of the sums over $\m$ and $Q$; the former sum is then comparable to the integral
\[
I = \int_{C_j} (1 + | \beta^{1/2} 2^j (x_1 + x_2) |^2 )^{-Mp} dx_1 dx_2,
\]
with $C_j$ an annulus of inner and outer radii proportional to $2^j$. Over this domain, the integrand concentrates near the union of two ``ridges'' of length $\sim 2^j$ and width $\sim \beta^{-1/2} 2^{-j}$, oriented along the anti-diagonal $x_1 = - x_2$. Note that $\beta = 2^{2j} k^{-2}$. The integral $I$ is therefore bounded by a constant times $2^j \, \times (\beta^{-1/2} 2^{-j}) = 2^{-j} k$.  Third, the sum over $Q \in \mathbf{Q}_j$ that intersect with the strip yields a factor $2^{j}$, proportional to the number of diagonal dyadic squares at scale $j$. The remaining sum is bounded by
\[
C_p \sum_{j \leq \frac{1}{2} \log_2 k + C} 2^{j} (2^{-j} k) 2^{-jp} \leq C_p \, k^{1-p/2}.
\]
This is the desired growth rate in $k$.

\item \emph{Regular on-strip contribution}. Here,  $j \geq \frac{1}{2} \log_2 k + C$, therefore $\| \m \|_\infty \geq C \, 2^j \geq C \, \sqrt{k}$. The factor $(1 + \| \m \|_\infty)^{-M}$ in equation (\ref{eq:onstrip-reg}) therefore yields a negative power $k^{-M/2}$ for all $M > 0$, i.e., $k^{-\infty}$, that sums up to a negligible contribution.

\item \emph{Singular on-strip contribution}. As previously, two scale regimes should be considered. When $j \leq \frac{1}{2} \log_2 k$, we can use the bound (\ref{eq:onstrip-sing}). The sum over $\n$ is harmless; the sum over $\m$ produces a factor $2^{j}$ since there are significant  $O(2^{j})$ values of $\m$ on the ridge $|m_1 + m_2| \leq C$ at scale $j$; the sum over $Q$ produces another factor $2^{j}$ since there are $O(2^{j})$ diagonal dyadic squares a scale $j$. The resulting sum over $j$ is then bounded by
\[
C_p \, \sum_{j \leq \frac{1}{2} \log_2 k} 2^{2j} 2^{-jp} \leq C_p \, k^{1-p/2},
\]
which is again the desired growth rate.

If now $j \geq \frac{1}{2} \log_2 k$, we need to invoke the collective decay estimate (\ref{eq:collective}. Fix $j$ and $Q \in \mathcal{Q}_j$ a singular dyadic square. By equation (\ref{eq:onstrip-sing}), values of $m_1 + m_2$ significantly different from zero will give rise to negligible coefficients that sum up to a $o(k)$. More precisely, let $\delta > 0$ be arbitrarily small. Then the wave atom coefficients in the region $|m_1 + m_2| \geq C 2^{\delta j}$ decay sufficiently fast (take $M \gg 1/\delta$) that their total contribution is a $O(k^{-\infty})$ in $\ell_p$. The significant coefficients at scale $j$ are, again, on a ridge of length $O(2^{j})$ and width $O(2^{\delta j})$, for a combined total of $N = O(2^{j(1+\delta)})$ significant coefficients.

We can now relate the $\ell_2$ norm estimate (\ref{eq:collective}) to an $\ell_p$ estimate, $0 < p < 2$, by means of the H\"{o}lder inequality
\begin{equation}\label{eq:Holder}
\left( \sum_{\m,\n} | \< K_Q \rho(k\sigma), \vf_{j,\m,\n} \> |^p \right)^{1/p} \leq \left( \sum_{\m,\n} | \< K_Q \rho(k\sigma), \vf_{j,\m,\n} \> |^2 \right)^{1/2} \, \times \, N^{\frac{1}{p} - \frac{1}{2}},
\end{equation}
where $N = O(2^{j(1+\delta)})$. After simplification, the right-hand side is bounded by $C_{\delta, p} \, j 2^{j(-2 + 1/p + \delta')}$ where $\delta'$ is another arbitrarily small number, namely $\delta' = \delta (\frac{1}{p} - \frac{1}{2})$. This quantity still needs to be summed over $Q$---there are $O(2^j)$ such squares---and then over $j$; but the summation method will depend how $p$ compares to $1$, and accordingly, which of equations (\ref{eq:ptriangle-KQ}) or (\ref{eq:triangle-KQ}) should be used.

If $p \leq 1$, then (\ref{eq:ptriangle-KQ}) should be used, and we obtain
\[
\| K_\mu \|^p_{\ell_p(F)} \leq C_{\delta, p} \, \sum_{j \geq \frac{1}{2} \log_2 k} \, 2^j \left( j \, 2^{j(-2 + \frac{1}{p} + \delta')} \right)^p = C_{\delta, p} \, \sum_{j \geq \frac{1}{2} \log_2 k} j^p \, 2^{j(2-2p+\delta' p)} ,
\]
which always diverges. However if $p \geq 1$, then  (\ref{eq:triangle-KQ}) implies
\[
\| K_\mu \|_{\ell_p(F)} \leq C_{\delta, p} \, \sum_{j \geq \frac{1}{2} \log_2 k} 2^j \, \left( j \, 2^{j(-2 + \frac{1}{p} + \delta')} \right).
\]
For any $1 < p < 2$, the above series is convergent if for instance we choose $\delta' = \frac{1}{2} \left(1 - \frac{1}{p} \right)$, and the result is a $O(1)$, independent of $k$. This part of the singular on-strip contribution falls into the second category identified at the beginning of the proof, namely equation (\ref{eq:Kmu-ellp2}). This concludes the proof in the case when $K$ is the single-layer potential $G_0$.

\end{itemize}

\subsection{Analysis of the double-layer potential}\label{sec:dlp}

The proof of the sparsity result for the double-layer kernel $G_1$ defined in equation (\ref{eq:DLP}) is a simple modification of that for the single-layer kernel $G_0$.

\begin{itemize}
\item \emph{Nondiagonal part.} The smoothness bound for Hankel functions in Lemma \ref{teo:hankel1} exhibits the same rate for all $n \geq 0$ in the case when $x \gtrsim 1/k$. The other factors, functions of $s$ and $t$ which accompany the Hankel factor in formula (\ref{eq:DLP}), have no bearing on the sparsity analysis since they are smooth and do not depend on $k$ but for the leading factor $ik/4$. As a consequence, $G_1$ can still be written as a product
\[
G_1(s,t) = a(k \phi(s,t),s,t) \, e^{i k \phi(s,t)}, \qquad \mbox{(nondiagonal part, $|s-t| \gtrsim 1/k$),}
\]
where the amplitude obeys the same estimate as previously, but for a factor $k$:
\[
| \frac{d^{n}}{d \phi^n} a(k \phi(s,t),s,t)| \leq C_n \, k \,\frac{1}{\sqrt{k \phi(s,t)}} \phi(s,t)^{-n}.
\]

With a number of nonstandard wave atom coefficients $|\Lambda| = O(k \eps^{-1/\infty})$, one could form an approximation of the nondiagonal part of $G_0$ with error $\eps$. In the case of $G_1$, the same number of terms results in an error that we can only bound by $k \eps$. Thus, to make the error less than a specified $\tilde{\eps}$, the number of terms needs to be $O(k^{1+1/\infty} {\tilde{\eps}}^{-1/\infty})$. This justifies the form of the first term in (\ref{eq:cardinality1}).

\item \emph{Diagonal part.} For $x \lesssim 1/k$ the smoothness estimate in Lemma \ref{teo:hankel1} is worse for $H_1^{(1)}$ than for $H_0^{(1)}$, but as is well-known, the dot product
\[
 \frac{\mathbf{x}(s) -\mathbf{x}(t)}{\| \mathbf{x}(s) - \mathbf{x}(t) \|} \cdot n_{\mathbf{x}(t)} \, \|\dot{\mathbf{x}}(t)\|
\]
is small near the diagonal and more than compensates for the growth of $H_1^{(1)}$ there. The precise version of this heuristic is a decomposition
\[
G_1(s,t) = a(k \phi(s,t),s,t) \, e^{i k \phi(s,t)}, \qquad \mbox{(diagonal strip, $|s-t| \lesssim 1/k$),}
\]
where we claim that the amplitude obeys the same estimates as those for $G_0$ in variables $\sigma = s-t$, $\tau = s + t$, namely
\begin{equation}\label{eq:dadtau2}
| \frac{d^n a}{d \tau^n}\left( k \phi\left( \frac{\sigma+\tau}{2} , \frac{\tau-\sigma}{2} \right), \frac{\sigma+\tau}{2} , \frac{\tau-\sigma}{2} \right) | \leq C_n,
\end{equation}
\begin{equation}\label{eq:dadsigma2}
| \frac{d^n a}{d \sigma^n}\left( k \phi\left( \frac{\sigma+\tau}{2} , \frac{\tau-\sigma}{2} \right), \frac{\sigma+\tau}{2} , \frac{\tau-\sigma}{2} \right) | \leq C_n \, \phi^{-n}, \qquad\qquad \phi \lesssim 1,
\end{equation}
for $\sigma \lesssim 1/k$, and this time for every $n \geq 0$ including zero. (Total derivatives in $\sigma$ are defined by keeping $\tau$ fixed, and vice-versa.)

Let us prove (\ref{eq:dadtau2}). As previously, put $r = \frac{\x(s) - \x(t)}{\| \x(s) - \x(t) \|}$. Observe that $n_{\x(t)} \| \dot\x(t) \| = (\dot\x(t))^\bot \cdot r$. Derivatives of $(\dot\x(t))^\bot \cdot r$ in $\sigma$ and $\tau$ are treated by the following lemma, proved in the Appendix.

\begin{lemma}\label{teo:normalder}
For all $n \geq 0$,
\begin{equation}\label{eq:normaldertau}
| \frac{d^n}{d \tau^n} \left[ (\dot\x(t))^\bot \cdot r \right] | \leq C_n \, \sigma,
\end{equation}
\begin{equation}\label{eq:normaldersigma}
| \frac{d^n}{d \sigma^n} \left[ (\dot\x(t))^\bot \cdot r \right] | \leq C_n \, \sigma^{1-n}.
\end{equation}
\end{lemma}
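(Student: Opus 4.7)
The plan is to show that the quantity $(\dot\x(t))^\bot \cdot r$ factors as $\sigma \cdot S(s,t)$ with $S$ of class $C^\infty$ in $(s,t)$, uniformly in a neighborhood of the diagonal $s=t$. Once this is established, both (\ref{eq:normaldertau}) and (\ref{eq:normaldersigma}) will follow immediately from Leibniz's rule and the observation that the change of variables $\sigma = s-t$, $\tau = s+t$ is linear, so smoothness of $S$ in $(s,t)$ transfers to uniform bounds on all mixed derivatives $\partial_\sigma^j \partial_\tau^k S$.

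To extract the $\sigma$ factor from the numerator, I use that $(\dot\x(t))^\bot \cdot \dot\x(t) = 0$ and Taylor's theorem with integral remainder to write
\[
(\dot\x(t))^\bot \cdot (\x(s)-\x(t)) = \sigma^2 \int_0^1 (1-u) \, (\dot\x(t))^\bot \cdot \ddot\x(t+u\sigma) \, du \; =: \; \sigma^2 R(s,t),
\]
where $R$ and all its derivatives are bounded uniformly by $C^\infty$-smoothness of the parametrization. For the denominator, a similar expansion gives $\|\x(s)-\x(t)\|^2 = \sigma^2 F(s,t)$ with
\[
F(s,t) = \int_0^1 \int_0^1 \dot\x(t+u\sigma) \cdot \dot\x(t+v\sigma) \, du \, dv,
\]
a smooth function of $(s,t)$. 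The geometric regularity hypothesis (\ref{eq:geomreg}), combined with $d(s,t) \asymp |\sigma|$ for $\sigma$ small, forces $F$ to stay bounded below by a positive constant in a neighborhood of the diagonal; hence $\sqrt{F}$ is $C^\infty$ there. Since we are in the setting $\sigma > 0$, this yields $\|\x(s)-\x(t)\| = \sigma \sqrt{F(s,t)}$ and therefore
\[
(\dot\x(t))^\bot \cdot r = \sigma \cdot S(s,t), \qquad S := R/\sqrt{F} \in C^\infty.
\]

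The two bounds now follow routinely. Since $\sigma$ is annihilated by $\partial_\tau$, we have $\partial_\tau^n (\sigma S) = \sigma \, \partial_\tau^n S$, which is $O(\sigma)$ and gives (\ref{eq:normaldertau}). Leibniz yields $\partial_\sigma^n (\sigma S) = n \, \partial_\sigma^{n-1} S + \sigma \, \partial_\sigma^n S$, which is $O(\sigma)$ for $n=0$ and $O(1)$ for $n \geq 1$; in the diagonal strip where $\sigma \lesssim 1$, both cases are dominated by $C_n \sigma^{1-n}$, giving (\ref{eq:normaldersigma}). The only nontrivial ingredient in the whole argument is the use of geometric regularity to keep $F$ bounded away from zero, without which $\sqrt{F}$ would be smooth only on $\{\sigma \ne 0\}$ and the factorization into a smooth $S$ would break; the rest is routine Taylor expansion and the chain rule.
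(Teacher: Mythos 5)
Your proof is correct and rests on the same key idea as the paper's: use the orthogonality $(\dot\x(t))^\bot \cdot \dot\x(t)=0$ together with Taylor's theorem to expose a factor $\sigma^2$ in the numerator, and use regularity of the parametrization (or the geometric regularity hypothesis \eqref{eq:geomreg}) to keep the denominator $\|\x(s)-\x(t)\|$ comparable to $\sigma$. The packaging, however, differs. The paper's proof tracks powers of $\sigma$ term by term through the product rule---each $\sigma$-derivative costs one power of $\sigma$, whether it lands on the Taylor remainder $\x(t)+\sigma\dot\x(t)-\x(s)$ or on $\|\x(t)-\x(s)\|^{-m}$, while $\tau$-derivatives cost none---and reads off the bound $C_n\,\sigma^{1-n}$ directly. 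You instead assemble the factorization $(\dot\x(t))^\bot\cdot r = \sigma\,S(s,t)$ with $S=R/\sqrt{F}$ a genuine $C^\infty$ function whose derivatives are uniformly bounded near the diagonal, and then apply Leibniz once. This is cleaner and in fact yields the strictly stronger conclusion that $\sigma$-derivatives of order $n\ge 1$ are $O(1)$ rather than $O(\sigma^{1-n})$; the paper's coarser estimate is all that is used downstream, so it never records this refinement. Your use of the geometric regularity hypothesis to bound $F$ from below is the correct nontrivial ingredient, and your restriction to $\sigma>0$ near the diagonal is consistent with the paper's reduction to the triangle $s\ge t$ inside the diagonal strip, so there is no gap.
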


The chain rule and Fa\`{a} di Bruno formula can then be invoked as previously, with the combined knowledge of (\ref{eq:normaldertau}), (\ref{eq:normaldersigma}), the growth of $H_1^{(1)}$ from Lemma \ref{teo:hankel1}, i.e.,
\[
| \frac{d^m}{d \phi^m} \left[ H_1^{(1)} (k \phi(s,t)) e^{-ik \phi(s,t)} \right] | \leq C_m \, \frac{1}{k \sigma} \, \sigma^{-m},
\]
as well as equations (\ref{eq:dphidtau}) and (\ref{eq:dphidsigma}) on the growth of the derivatives of $\phi$. It is straightforward to see that (\ref{eq:dadtau2}) is satisfied; for instance, the factor $\sigma$ from $(\dot\x(t))^\bot \cdot r$ and the leading $k$ in the expression of $G_1$ cancel out the $1/(k \sigma)$ in the formula for the derivatives of $H_1^{(1)}$. The rest of the argument involving the Fa\`{a} di Bruno formula is the same as previously. Equation (\ref{eq:dadsigma2}) follows from the same reasoning, and the observation that no $\sigma$ factor is gained upon differentiating $\phi$ in $\sigma.$

Since (\ref{eq:dadtau2}) and (\ref{eq:dadsigma2}) are at least as good as what they were in the case of $G_0$, the rest of the argument can proceed as previously with the same results; the ``off-strip'' and ``regular on-strip'' contributions, for instance, are unchanged from the $G_0$ scenario. The ``singular on-strip'' contribution however, corresponding to dyadic squares that intersect the diagonal, ought to be revisited since $G_1$ has a much milder singularity than $G_0$ near the diagonal.

The estimate of fast decay in $| m_1 + m_2 |$ and $\| \n \|$, namely (\ref{eq:onstrip-sing}), is a fortiori still valid. It appears, however, that the collective bound (\ref{eq:collective}) at scale $j$ can be improved to
\begin{equation}\label{eq:collective-DLP}
\sum_{\m, \n} | \< K \chi^{\mbox{diag}}_{j}  \rho(k\sigma), \vf_{j,\m,\n} \> |^2 \leq C \; 2^{-6j} k^2, 
\end{equation}
where $\chi^{\mbox{diag }}_{j}(s,t)$ refers to the $\sum_Q w_Q(s,t)$ over the squares $Q \in \mathcal{Q}_j$ at scale $j$ for which the support of $w_Q$ intersects the diagonal. The presence of an aggregation of windows $\chi^{\mbox{diag }}_{j}(s,t)$ is important here, as the study of coefficients corresponding to individual windows $w_Q$ would not give a sharp bound. Whether $\chi^{\mbox{diag }}_{j}(s,t)$ or $\rho(k\sigma)$ effectively determines the cutoff depends on the relative values of $j$ and $\log_2 k$.

Again, via a Plancherel argument, the scale-by-scale bound (\ref{eq:collective-DLP}) can be proved by passing to a system of Daubechies wavelets. We have
\begin{equation}\label{eq:waveletcoeff}
\< K \chi^{\mbox{diag}}_{j} \rho(k\sigma), \psi^\varepsilon_{j',\n} \>  = \int_{\mbox{supp }  \psi^\varepsilon_{j',\n}} \; \chi^{\mbox{diag }}_{j}(s,t) \rho(k\sigma) \; \frac{ik}{4} H_1^{(1)}(k \phi(s,t)) \; (\dot\x(t))^\bot \cdot r 
\end{equation}
\[
\qquad\qquad\qquad\qquad \times 2^{j'} \,  \psi^\varepsilon (2^{j'} s - n_1, 2^{j'} t - n_2) \, ds dt,
\]
where the scale of the wavelet relates to that of the window $w$ as $2j - j_0 \leq j' \leq 2j + j_0$. 

As previously we will use the vanishing moments of the wavelet to bring out a few $2^{-j'}$ factors. This time we will need up to three vanishing moments, i.e., we write the wavelet as
\[
\psi^\varepsilon(2^{j'} s - n_1, 2^{j'} t - n_2) = \left( 2^{-j'} \frac{d}{d s} \right)^M \tilde{\psi}^\varepsilon (2^{j'} s - n_1, 2^{j'} t - n_2), \qquad \varepsilon = \mbox{HL or HH},
\]
or as
\[
\psi^\varepsilon(2^{j'} s - n_1, 2^{j'} t - n_2) =  \left( 2^{-j'} \frac{d}{d t} \right)^M \tilde{\psi}^\varepsilon (2^{j'} s - n_1, 2^{j'} t - n_2), \qquad \varepsilon = \mbox{LH},
\]
where $M \leq 3$, and $ \tilde{\psi}^\varepsilon$ has the same support as $\psi^\varepsilon$. Before we let these derivatives act on the rest of the integrand, multiply and divide by $\phi(s,t) = \| \x(t) - \x(s) \|$ to get
\[
k H_1^{(1)}(k \phi(s,t)) \; (\dot\x(t))^\bot \cdot r = \left[ k\phi(s,t) H_1^{(1)}(k \phi(s,t)) \right] \; (\dot\x(t))^\bot \cdot \frac{\x(t) - \x(s)}{\| \x(t) - \x(s) \|^2}.
\]
We will then need the following lemma, which refines the results of equations (\ref{eq:dphidtau}), (\ref{eq:dphidsigma}), and Lemma \ref{teo:normalder}. It is proved in the Appendix.

\begin{lemma}\label{teo:normalder2}
Let $\phi(s,t) = \| \x(s) - \x(t) \|$. For every integer $m \leq 0$ there exists $C_m > 0$ such that, as long as $s \ne t$,
\begin{equation}\label{eq:dphids-refined}
| \left( \frac{d}{ds} \right)^m \phi(s,t) | \leq C_m,
\end{equation}
\begin{equation}\label{eq:dnrds-refined}
| \left( \frac{d}{ds} \right)^m  \left[ (\dot\x(t))^\bot \cdot \frac{\x(t) - \x(s)}{\| \x(t) - \x(s) \|^2} \right] | \leq C_m,
\end{equation}
The same inequalities hold with $d/dt$ derivatives in place of $d/ds$ derivatives.
\end{lemma}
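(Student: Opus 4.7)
My approach is to factor the singular behaviour near $s=t$ through a smooth ``divided-difference'' representation of the curve. Since the lemma is only invoked in the near-diagonal regime (where $|s-t|$ is small and $d(s,t) \asymp |s-t|$), I restrict attention to that case. By the fundamental theorem of calculus,
\[
\x(s) - \x(t) = (s-t)\, \mathbf{v}(s,t), \qquad \mathbf{v}(s,t) := \int_0^1 \dot\x(t + \theta(s-t))\, d\theta,
\]
where $\mathbf{v}$ is $C^\infty$ with $\mathbf{v}(t,t) = \dot\x(t)$. Setting $\rho(s,t) := \|\mathbf{v}(s,t)\|^2$, the geometric regularity assumption (\ref{eq:geomreg}) implies $\rho$ is $C^\infty$ and bounded below by a strictly positive constant on the relevant region; consequently $\sqrt{\rho}$ is $C^\infty$ with uniformly bounded derivatives of every order.

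For (\ref{eq:dphids-refined}), this representation gives $\phi(s,t) = |s-t|\sqrt{\rho(s,t)}$. For $s \ne t$ with $t$ fixed, $|s-t|$ is a $C^\infty$ function of $s$ with first derivative $\pm 1$ and higher derivatives zero, so the Leibniz rule yields
\[
\left(\frac{d}{ds}\right)^m \phi(s,t) = \pm(s-t)\, \partial_s^m \sqrt{\rho(s,t)} \;\pm\; m\, \partial_s^{m-1}\sqrt{\rho(s,t)},
\]
which is bounded uniformly in $s,t$ since $|s-t| \le 1$ and the derivatives of $\sqrt\rho$ are bounded. The $d/dt$ case is symmetric.

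For (\ref{eq:dnrds-refined}), substituting the same factorisation gives
\[
(\dot\x(t))^\bot \cdot \frac{\x(t) - \x(s)}{\|\x(t)-\x(s)\|^2} = -\frac{g(s,t)}{(s-t)\,\rho(s,t)}, \qquad g(s,t) := (\dot\x(t))^\bot \cdot \mathbf{v}(s,t).
\]
Here $g$ is jointly $C^\infty$, and $g(t,t) = (\dot\x(t))^\bot \cdot \dot\x(t) = 0$ by orthogonality. Hadamard's lemma applied with $t$ as a parameter,
\[
g(s,t) = (s-t)\int_0^1 \partial_s g\bigl(t+\theta(s-t),\,t\bigr)\,d\theta =: (s-t)\,\tilde g(s,t),
\]
exhibits $\tilde g$ as $C^\infty$ on the compact square. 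The expression therefore equals $-\tilde g(s,t)/\rho(s,t)$, a $C^\infty$ function of $(s,t)$, and all of its $s$- and $t$-derivatives are uniformly bounded, which establishes (\ref{eq:dnrds-refined}).

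The only mildly delicate point is ensuring that the Hadamard factor $\tilde g$ remains \emph{jointly} smooth in $(s,t)$ rather than smooth only in $s$ for each fixed $t$; the integral formula above makes this transparent, since $\partial_s g$ is jointly $C^\infty$ and parameter-dependent integration preserves smoothness. Everything else reduces to routine bookkeeping with the Leibniz and chain rules.
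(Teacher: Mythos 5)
Your proof is correct, and it takes a genuinely different route from the paper's. The paper's argument is coordinate-based: it normalizes $\x(t)=(0,0)$, $n_{\x(t)}=(0,1)$, and parametrizes the curve locally as a graph $(s,f(s))$ with $|f(s)|\leq Cs^2$, then verifies smoothness by explicit algebraic manipulations of $f(s)/s^2$ and $1/(1+f^2(s)/s^2)$. A consequence of this choice is that the $d/dt$ case of (\ref{eq:dnrds-refined}) does not follow from $s\leftrightarrow t$ symmetry alone (since $(\dot\x(t))^\bot$ would become $(\dot\x(s))^\bot$), and the paper must separately bound the correction term $[(\dot\x(t))^\bot-(\dot\x(s))^\bot]\cdot(\x(t)-\x(s))/\|\x(t)-\x(s)\|^2$. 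Your approach instead writes $\x(s)-\x(t)=(s-t)\,\mathbf{v}(s,t)$ with the jointly smooth divided difference $\mathbf{v}$, then applies Hadamard's lemma once more to peel $(s-t)$ out of $(\dot\x(t))^\bot\cdot\mathbf{v}(s,t)$, exhibiting the whole expression as $-\tilde g(s,t)/\rho(s,t)$ with $\tilde g$, $\rho$ jointly $C^\infty$ and $\rho$ bounded below. That representation is manifestly symmetric in its treatment of $s$ and $t$, so both derivative directions in (\ref{eq:dnrds-refined}) (and the $d/dt$ case of (\ref{eq:dphids-refined})) are handled in a single stroke, avoiding the paper's extra step. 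The trade-off is that the paper's computation is elementary and self-contained while yours invokes Hadamard's lemma and the standard fact that parameter-dependent integrals preserve smoothness; both are fine. One small point worth stating explicitly: the near-diagonal restriction should be phrased in terms of $d(s,t)$ small, with a choice of coset representative so that $|s-t|$ is genuinely small (as the paper does in setting up $\sigma=s-t$ mod $1$), since otherwise the lower bound on $\rho$ could fail in the wrap-around case; away from the diagonal the denominators are bounded below and all bounds are immediate.
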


Let us first consider the wavelets whose support intersects the diagonal. There, the support of the wavelet is sufficiently small that $\chi^{\mbox{diag}}_j (s,t) = \rho(k \sigma) = 1$. One may integrate by parts only once in (\ref{eq:waveletcoeff}), because
\[
\nabla_{(s,t)} \phi(s,t) = \left( \dot\x(s) \cdot r, - \dot\x(t) \cdot r \right)
\]
is discontinuous, since the unit chord $r  = (\x(s)-\x(t))/\| \x(s) - \x(t) \|$ changes sign across the diagonal and $\dot\x(t) \ne 0$ there. The action of either $d/ds$ or $d/dt$ on the integrand after integration by parts gives:
\begin{itemize}
\item a $O(1)$ contribution for $\frac{d}{dx} (x H^{(1)}_1(x))$, by Lemma \ref{teo:hankel2};
\item by the chain rule, a $O(k)$ contribution for $\frac{d}{ds}(k\phi)$ and $\frac{d}{dt}(k \phi)$, because of Lemma \ref{teo:normalder2};
\item a $O(1)$ contribution for derivatives of $(\dot\x(t))^\bot \cdot r / \phi$, by Lemma \ref{teo:normalder2}.
\end{itemize}
The size of the support is $2^{-2j'}$, the wavelet comes with an $L^2$ normalization $2^{j'}$, one factor $2^{-j'}$ comes out of the vanishing moment, and $|j' - 2j| \leq $const.; hence diagonal wavelet coefficients obey the bound
\[
| \< K \chi^{\mbox{diag}}_{j} \rho(k\sigma), \psi^\varepsilon_{j',\n} \>  | \leq 2^{-4j} k. \qquad \mbox{(diagonal wavelets)}
\]
There are $O(2^{j'}) = O(2^{2j})$ such diagonal wavelets overall, hence the sum of squares of these coefficients is bounded by $2^{-6j} k^2$, in accordance with equation (\ref{eq:collective-DLP}).

Let us now treat the wavelets that do not intersect the diagonal $s=t$, and show that the same bound is valid. One will now need to integrate by parts three times in $s$ or $t$ to get three $2^{-j'}$ factors out, and gather the action of the derivatives on the rest of the integrand as follows.
\begin{itemize}
\item The factors $\rho(k \sigma) \chi^{\mbox{diag}}_j(s,t)$ are essentially multiplied by $\max(2^j, k)$ for each derivative.
\item By Lemma \ref{teo:hankel2}, the combination $x H^{(1)}_1(x)$ becomes $1/x$ when differentiated three times in $x$. This is $1/k\phi$ when $x = k \phi$.
\item Derivatives of $\phi$ in $s$ and $t$ remain $O(1)$ by Lemma \ref{teo:normalder2}, hence each derivative of $k \phi$ yields a factor $k$.
\item By Lemma \ref{teo:normalder2}, al derivatives of $(\dot\x(t))^\bot \cdot \frac{\x(t) - \x(s)}{\| \x(t) - \x(s) \|^2}$ remain bounded uniformly in $s$ and $t$.
\end{itemize}

The product rule yields many terms but the overall sum is controlled by the behavior of the ``extreme'' terms identified above, hence a factor $\max(2^{3j},k^3) + k^3 / k\phi$ under the integral sign. Since the wavelet has support well away from the diagonal, we can proceed as previously and bound $\phi^{-1}(s,t)$ by $q^{-1} 2^{j'}$ where $q$ is an integer indexing the distance between the diagonal and the center of the wavelet. Again, the support of the wavelet has area $O(2^{-2j'})$ and $j'$ is comparable to $2j$, hence we get a bound
\[
| \< K \chi^{\mbox{diag}}_{j} \rho(k\sigma), \psi^\varepsilon_{j',\n} \>  | \leq 2^{-8j} \left[ \max(2^{3j}, k^3) + k^2 q^{-1} 2^{2j} \right]. \qquad \mbox{(nondiagonal wavelets)}
\]

As seen previously, the number of wavelet coefficients is a $O(\min(2^{2j}/k, 2^{j}))$ across the diagonal (indexed by $q$), times $O(2^{2j})$ along the diagonal, for a total of $O(\min(2^{4j}/k, 2^{3j}))$. Hence we have
\begin{align*}
\sum_{j' \in [2j-j_0, 2j + j_0]} &\sum_{\varepsilon,\n} |\< K_Q \rho(k\sigma), \psi^\varepsilon_{j',\n} \>|^2 \\
&\lesssim \left[ \min\left(\frac{2^{4j}}{k}, 2^{3j}\right) \times 2^{-16j} \max(2^{6j}, k^6) \right] +  \left[ 2^{2j} \sum_q 2^{-16 j} k^4 q^{-2} 2^{4j} \right]\\
&\lesssim \left[ 2^{-12j} k^5 + 2^{-7j} \right] + \left[ 2^{-10j} k^4 \right] \\
&\lesssim \; 2^{-6j} k^2 \qquad\qquad \mbox{since } j \geq \frac{1}{2} \log_2 k + C.
\end{align*}
This is the desired decay rate, compatible with equation (\ref{eq:collective-DLP}).

We are now left with the task of verifying that (\ref{eq:collective-DLP}) implies the correct decay of the $\ell_p$ norm, as in equation (\ref{eq:Kmu-ellp3}). Let $p < 1$. Start by using H\"{o}lder's inequality (\ref{eq:Holder}) with $N = O(2^{j(2+\delta)})$---there are $O(2^j(1+\delta))$ wave atoms per square $Q$, and $O(2^{j})$ squares along the diagonal. We get
\[
\sum_{\m, \n} | \< K \chi^{\mbox{diag}}_{j}  \rho(k\sigma), \vf_{j,\m,\n} \> |^p \leq C \; \left[ ( 2^{-3j} k ) 2^{j(2+\delta)(1/p-1/2)} \right]^p = C \, k^p 2^{(2-4p + \delta' p)j},
\]
where $\delta' = \delta (\frac{1}{p} - \frac{1}{2})$. Finally, the $p$-triangle inequality asks to sum this bound over $j \geq \frac{1}{2} \log_2 k$. The sum is convergent provided $p > 1/2$ and $\delta'$ is taken sufficiently small. The result is
\[
\| K_\mu \|^p_{\ell_p(F)} \leq C_p \, k^p k^{\frac{1}{2}(2-4p+\delta' p)} = C_p \, k^{1-p + \delta''}, \qquad \forall \delta'' > 0
\]
After taking the $1/p$-th power, we fall exactly into scenario 3 for the $\ell_p$ summation, i.e., equation (\ref{eq:Kmu-ellp3}). The proof is complete.

\subsection{Proof of Corollary \ref{teo:cor}}

Passing to relative error estimates requires scaling $\eps$ by $1/\sqrt{k}$ and $\sqrt{k}$ respectively. Recall $k \geq 1$.

\begin{itemize}
\item If we invoke Theorem \ref{teo:main} for $G_0$, with an absolute error $\eps/\sqrt{k}$ in place of $\eps$, then the number of terms $|\Lambda_0|$ becomes $O((k \eps^{-2})^{1 + 1/\infty})$. Hence if we can show that $1/\sqrt{k} \leq C \| K^0 \|_{2}$, then (\ref{eq:G0rel}) follows.
\item If we invoke Theorem \ref{teo:main} for $G_1$, with an absolute error $\eps \sqrt{k}$ in place of $\eps$, then the number of terms $|\Lambda_1|$ becomes $O(k^{1+1/\infty} \eps^{-1/\infty} + (k \eps^{-2})^{1/3 + 1/\infty})$. Hence if we can show that $\sqrt{k} \leq C \| K^1 \|_2$, then (\ref{eq:G1rel}) follows.
\item The combination of the above two error bounds would show (\ref{eq:G0G1rel}), provided we can show that $\sqrt{k} \geq C \| K^1 - i \eta K^0 \|_2$ when $\eta \asymp k$.
\end{itemize}

Therefore, it suffices to establish the lower bounds on $\| K^0 \|_2, \| K^1 \|_2$, and $\| K^{(0,1)} \|_2$. By the tight frame property of wave atoms, the claim for $K^0$ is exactly $\int_{[0,1]^2} |G_0(s,t)|^2 ds dt \geq C/k.$ Set $\phi(s,t) = \| \x(s) - \x(t) \|$. As $k \phi(s,t) > c_0$, Lemma \ref{teo:hankel3} implies that $|G_0(s,t)| \leq C (k \phi)^{-1/2}$. If $k$ is sufficiently large, we can restrict the integration domain to the nonempty set $k \phi(s,t) > c_0$, and directly conclude. If $k$ is not large enough for this step, the integral is still a uniformly continuous and positive function of $k$, hence uniformly bounded away from zero.

The claim for $K^1$ is $\int_{[0,1]^2} |G_1(s,t)|^2 ds dt \geq k.$ The non-Hankel factors in the expression of $G_1$ play a minor role in evaluating this lower bound; we can consider them bounded away from zero on a large set $S_1$ to which the integral is restricted. Lemma \ref{teo:hankel3} then implies that for $k \phi(s,t) > c_1$ and $(s,t) \in S_1$, we have $|G_1(s,t)| \geq k (k \phi)^{-1/2}$. By the same reasoning as previously, this leads to the lower bound.

The claim for $K^{(0,1)}$ is $\int_{[0,1]^2} |G_1(s,t) - i \eta G_0(s,t)|^2 ds dt \geq k,$ with $\eta \asymp k$. The reasoning is here a little more complicated since $G_1$ and $k G_0$ are on the same order of magnitude. The presence of $- i$, however, prevents major cancellations---and is in fact chosen for that very reason. The asymptotic decay of $G_1 - i \eta G_0$ for large $\phi(s,t)$ can be studied from the integral formulation of the Hankel function used throughout the Appendix for proving the three Lemmas in section \ref{sec:hankel}. Without entering into details, we remark that the integral factor in (\ref{eq:Hn-integral}) is for $z$ large very near real-valued, with positive real part. The exponential factor $e^{-in\pi/2}$ shows that $H_1$ is then almost aligned with $-i H_0$. The particular combination $G_1 - i \eta G_0$ with $\eta > 0$ respects this quadrature property of Hankel functions, and produces no cancellation at all in the limit $z \to \infty$. So for $k \phi$ large enough, the claim follows; and if $k \phi$ is not large enough, we fall back on an argument of uniform continuity as previously.

\end{itemize}

\section{Numerical experiments}\label{sec:num}

In this section, we provide several numerical examples to support the
sparsity results of the previous section. The three geometric objects
used in this section are displayed in Figure \ref{fig:geom}. For each
object, the boundary curve is represented using a small number of
Fourier coefficients. The last two examples have non-convex shapes
that typically result in multiple scattering effects. We report the
numerical results for the single layer kernel $G_0(s,t)$ in Section
\ref{sec:num_sl} and the results for the double layer kernel
$G_1(s,t)$ in Section \ref{sec:num_dl}. We omit the results of the
combined kernel $G_1(s,t) - i \eta G_0(s,t)$ as they are almost the
same as the single layer case.


\begin{figure}[h!]
  \begin{center}
    \begin{tabular}{ccc}
      \includegraphics[height=1.8in]{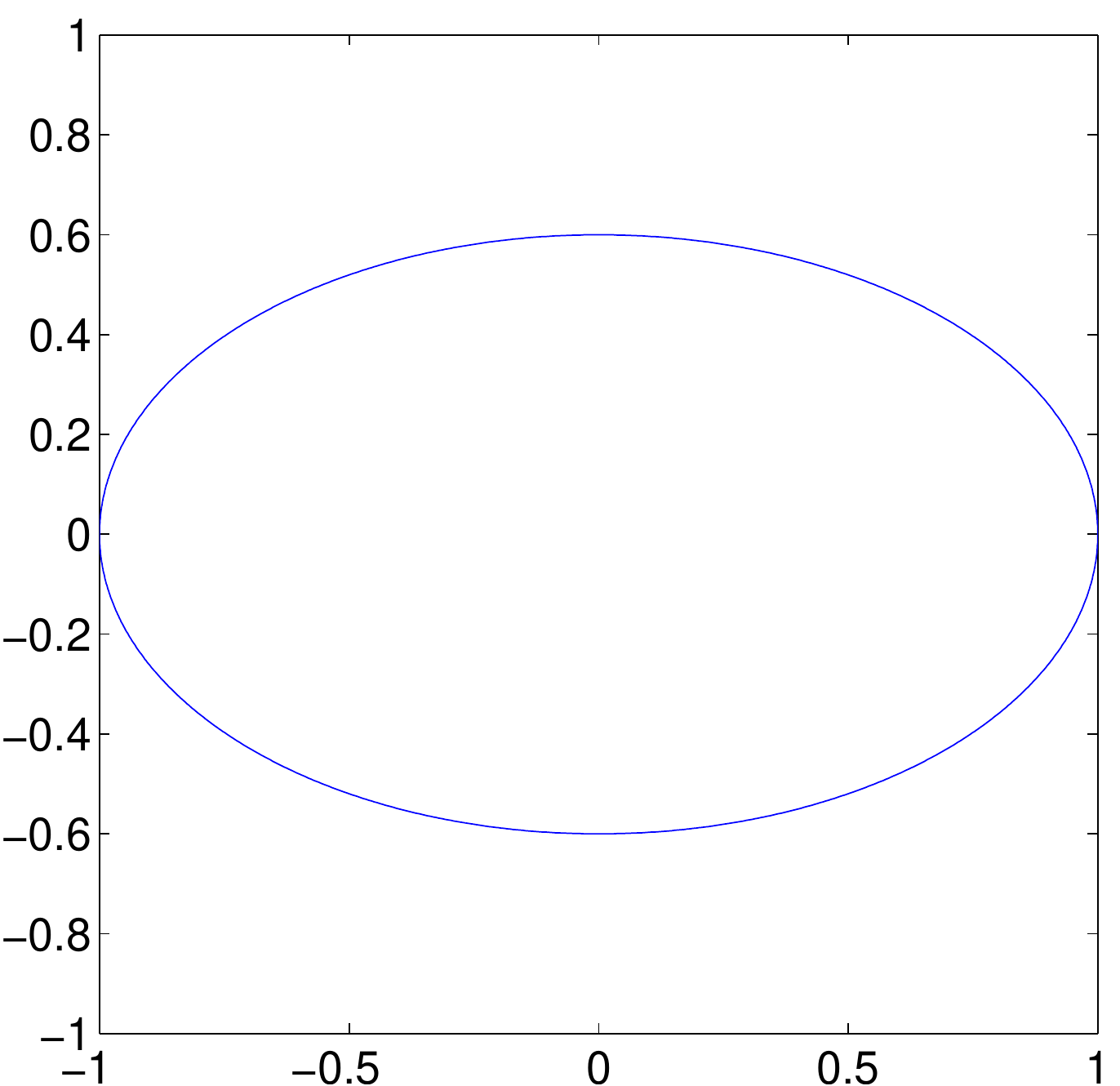}&
      \includegraphics[height=1.8in]{slkite0.pdf}&
      \includegraphics[height=1.8in]{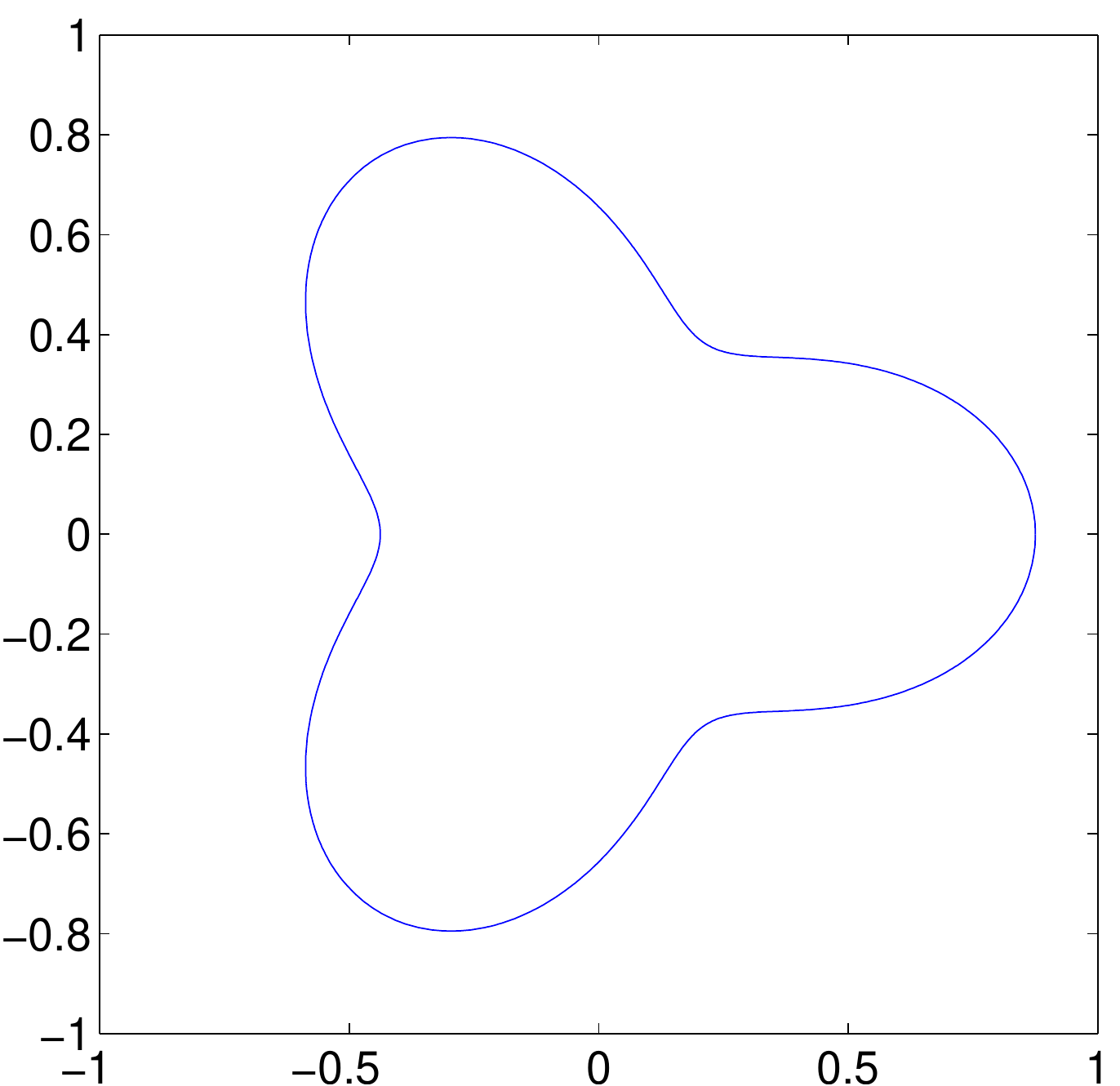}\\
      (a) & (b) & (c)
    \end{tabular}
  \end{center}
  \caption{The geometric objects used in the test examples.  (a): an
    ellipse. (b): a kite-shaped object. (c): a star-shaped object.}
  \label{fig:geom}
\end{figure}

\subsection{Single layer potential}\label{sec:num_sl}

We first study the single layer potential
\[
k \cdot G_0(s,t) = k \cdot \frac{i}{4} H^{(1)}_0(k \| \mathbf{x}(s) - \mathbf{x}(t) \|) \, \|\dot{\mathbf{x}}(t)\|.
\]
Notice that we use $k \cdot G_0(s,t)$ instead of $G_0(s,t)$, because the coupling constant $\eta$ in the integral equation \eqref{eq:bie} is of order $k$. Therefore, $k \cdot G_0(s,t)$ is more informative when we report the value at which coefficients are thresholded, and
the number of nonnegligible coefficients.

For each fixed $k$, we construct the discrete version of the operator
$k \cdot G_0(s,t)$ by sampling the boundary curve with $N = 8k$
quadrature points; this corresponds to about 8 points per wavelength
in these examples. Next, we scale the values at these quadrature
points with the high-order corrected trapezoidal quadrature rule from
\cite{K} in order to integrate the logarithmic singularity accurately.
This quadrature rule has the appealing feature of changing the weights
only locally close to the singularity. We then apply the two
dimensional wave atom transform to compute the coefficients $K^0_\mu
:= \< k G_0, \vf_\mu \>$. For a fixed accuracy $\eps$, we obtain the
sparsest approximant $\tilde{K}^0_\mu$ that satisfies
\[
\| K^0 - \tilde{K}^0 \|_{\ell_2(\mu)} \le \eps \| K^0 \|_{\ell_2(\mu)}
\]
by choosing the largest possible threshold value $\delta$ and setting
the coefficients less than $\delta$ in modulus to zero.  Equation
(\ref{eq:G0rel}) predicts that, as a function of $k$, the number of
wave atom coefficients defining $\tilde{K}^0$ should grow like
$k^{1+1/\infty}$.

\begin{table}[h!]
  \begin{center}
    \includegraphics[height=2in]{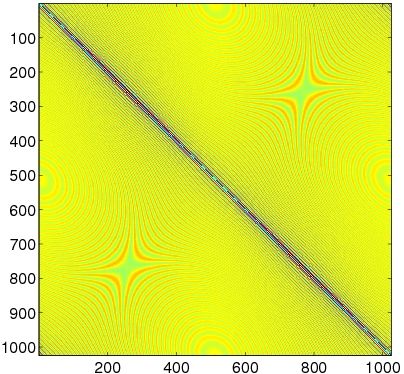}
    \includegraphics[height=2in]{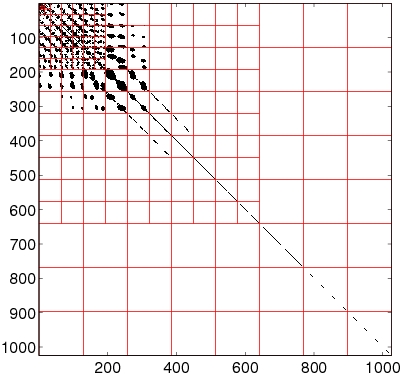}\\
    \vspace{0.1in}
    \begin{tabular}{|c|r|r|r|}
      \hline
      & $\eps=10^{-1}$ & $\eps=10^{-1.5}$ & $\eps=10^{-2}$ \\
      \hline
      $k=32  $ & 11 / 3.26e-2 / 9.53e-2  &    18 / 9.09e-3 / 2.88e-2   &   27 / 2.60e-3 / 7.93e-3      \\
      $k=64  $ &  9 / 3.37e-2 / 1.07e-1  &    16 / 9.20e-3 / 3.39e-2   &   28 / 2.39e-3 / 1.01e-2      \\
      $k=128 $ & 10 / 3.33e-2 / 9.80e-2  &    20 / 8.46e-3 / 3.29e-2   &   32 / 2.45e-3 / 1.04e-2      \\
      $k=256 $ &  9 / 3.63e-2 / 9.79e-2  &    18 / 9.33e-3 / 2.99e-2   &   30 / 2.46e-3 / 9.66e-3      \\
      $k=512 $ & 11 / 3.61e-2 / 9.79e-2  &    20 / 9.39e-3 / 3.07e-2   &   33 / 2.55e-3 / 9.86e-3      \\
      $k=1024$ &  9 / 4.00e-2 / 9.78e-2  &    17 / 1.01e-2 / 3.12e-2   &   29 / 2.72e-3 / 9.81e-3      \\
      \hline
    \end{tabular}
  \end{center}
  \caption{Single layer potential for the ellipse.
    Top left: the real part of the operator for $k=128$.
    Top right: the sparsity pattern of the operator under the wave atom basis 
    for $k=128$ and $\eps=10^{-2}$. Each black pixel stands for a nonnegligible coefficient.
    Bottom: For different combinations of $k$ and $\eps$, the number of nonnegligible entries
    per row $|\Delta_0|/N$, the threshold value $\delta$, and the estimated $L^2$ operator norm $\eps_{L^2}$ (from left to right).
  }
  \label{tbl:slellp}
\end{table}

\begin{table}[h!]
  \begin{center}
    \includegraphics[height=2in]{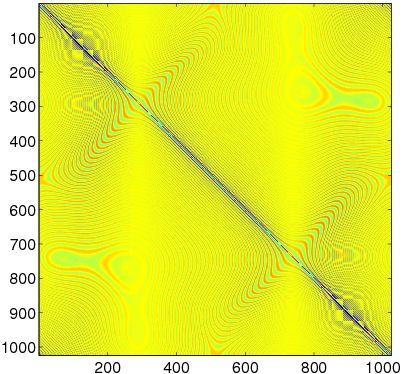}
    \includegraphics[height=2in]{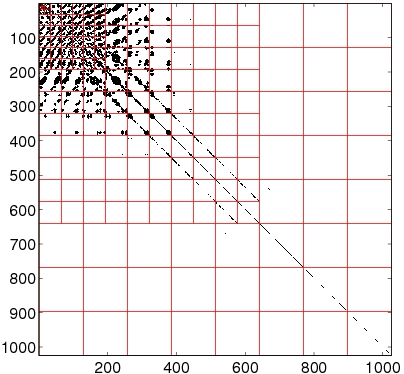}\\
    \vspace{0.1in}
    \begin{tabular}{|c|r|r|r|}
      \hline
      & $\eps=10^{-1}$ & $\eps=10^{-1.5}$ & $\eps=10^{-2}$ \\
      \hline
      $k=32  $ & 14 / 2.92e-2 / 9.50e-2   &   25 / 8.35e-3 / 3.07e-2  &    37 / 2.69e-3 / 8.79e-3\\      
      $k=64  $ & 15 / 2.70e-2 / 9.77e-2   &   30 / 7.66e-3 / 3.45e-2  &    46 / 2.41e-3 / 9.98e-3  \\    
      $k=128 $ & 17 / 2.68e-2 / 1.04e-1   &   34 / 7.49e-3 / 3.35e-2  &    53 / 2.26e-3 / 1.04e-2    \\  
      $k=256 $ & 17 / 2.70e-2 / 1.06e-1   &   35 / 7.27e-3 / 3.36e-2  &    58 / 2.07e-3 / 1.05e-2      \\
      $k=512 $ & 17 / 2.85e-2 / 1.11e-1   &   35 / 7.58e-3 / 3.43e-2  &    58 / 2.09e-3 / 1.07e-2      \\
      $k=1024$ & 17 / 2.89e-2 / 1.03e-1   &   35 / 7.75e-3 / 3.33e-2  &    60 / 2.07e-3 / 1.07e-2\\
      \hline
    \end{tabular}
  \end{center}
  \caption{Single layer potential for the kite-shaped object.
    Top left: the real part of the operator for $k=128$.
    Top right: the sparsity pattern of the operator under the wave atom basis 
    for $k=128$ and $\eps=10^{-2}$.
    Bottom: For different combinations of $k$ and $\eps$, $|\Delta_0|/N$,  $\delta$, and $\eps_{L^2}$.
  }
  \label{tbl:slkite}
\end{table}

\begin{table}[h!]
  \begin{center}
    \includegraphics[height=2in]{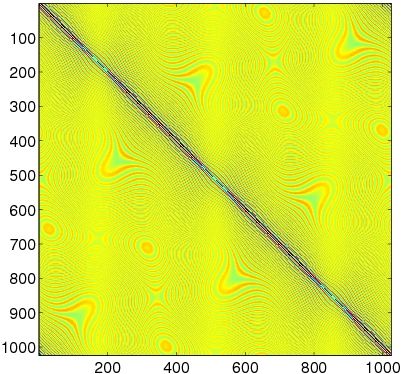}
    \includegraphics[height=2in]{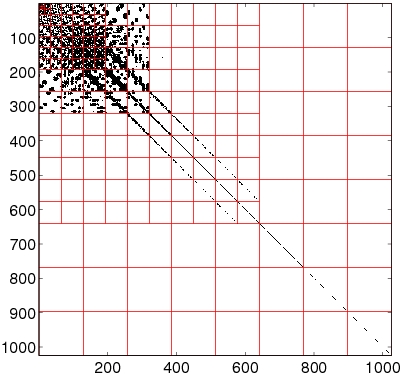}\\
    \begin{tabular}{|c|r|r|r|}
      \hline
      & $\eps=10^{-1}$ & $\eps=10^{-1.5}$ & $\eps=10^{-2}$ \\
      \hline
      $k=32  $ & 14 / 2.76e-2 / 1.00e-1 &     23 / 8.74e-3 / 3.14e-2 &     32 / 2.50e-3 / 9.72e-3      \\
      $k=64  $ & 14 / 2.55e-2 / 8.97e-2 &     25 / 7.68e-3 / 2.83e-2 &     38 / 2.39e-3 / 8.28e-3      \\
      $k=128 $ & 18 / 2.39e-2 / 8.36e-2 &     33 / 7.01e-3 / 2.64e-2 &     49 / 2.19e-3 / 8.06e-3      \\
      $k=256 $ & 18 / 2.32e-2 / 9.48e-2 &     35 / 6.66e-3 / 2.90e-2 &     56 / 1.98e-3 / 9.33e-3      \\
      $k=512 $ & 19 / 2.34e-2 / 9.30e-2 &     40 / 6.19e-3 / 2.94e-2 &     66 / 1.84e-3 / 9.30e-3      \\
      $k=1024$ & 18 / 2.42e-2 / 9.50e-2 &     38 / 6.41e-3 / 3.01e-2 &     66 / 1.77e-3 / 9.41e-3       \\
      \hline
    \end{tabular}
  \end{center}
  \caption{Single layer potential for the star-shaped object.
    Top left: the real part of the operator for $k=128$.
    Top right: the sparsity pattern of the operator under the wave atom basis 
    for $k=128$ and $\eps=10^{-2}$.
    Bottom: For different combinations of $k$ and $\eps$, $|\Delta_0|/N$,  $\delta$, and $\eps_{L^2}$.
  }
  \label{tbl:slstar}
\end{table}

For each example in Figure \ref{fig:geom}, we perform the test for
different combinations of $(k,\eps)$ with $k=32,64,\ldots,1024$ and
$\eps=10^{-1},10^{-1.5}$, and $10^{-2}$. The numerical results for the
three examples are summarized in Tables \ref{tbl:slellp},
\ref{tbl:slkite}, and \ref{tbl:slstar}, respectively. In each table,
\begin{itemize}
\item The top left plot is the real part of the single layer potential
  in the case of $k=128$. This plot displays coherent oscillatory
  patterns for which the wave atom frame is well suited.
\item The top right plot is the sparsity pattern of the operator
  under the wave atom basis for $k=128$ and $\eps=10^{-2}$. Each black
  pixel stands for a nonnegligible coefficient. The coefficients are
  organized in a way similar to the usual ordering of 2D wave atom
  coefficients: each block contains the wave atom coefficients of a
  fixed frequency index $(j,\m)$, and the blocks are ordered such that
  the lowest frequency is located at the top left corner while the
  highest frequency at the bottom right corner. Within a block, the
  wave atom coefficients of frequency index $(j,\m)$ are ordered
  according to their spatial locations. The multiscale nature of the
  wave atom frame can be clearly seen from this plot.
\item The table at the bottom gives, for different combinations of $k$
  and $\eps$, the number of nonnegligible coefficients per row
  $|\Delta_0|/N$ , the threshold value $\delta$ (coefficients below
  this value in modulus are put to zero) and the $L^2$-to-$L^2$ norm
  operator error $\eps_{L^2}$ estimated using random test functions.
\end{itemize}

In these tables, the number of significant coefficients per row
$\Delta_0/N$ grows very slowly as $k$ doubles and reaches a constant
level for large values of $k$. This match well with the theoretical
analysis in Section \ref{sec:sparsity}. The threshold value $\delta$
remains roughly at a constant level as $k$ grows, which is quite
different from the results obtained using wavelet packet bases
\cite{DL1,DL2,G,H} where the threshold value in general decreases as
$k$ grows. The estimated $L^2$-to-$L^2$ operator error $\eps_{L^2}$ is
very close to the prescribed accuracy $\eps$ in all cases. This
indicates that, in order to get an approximation within accuracy
$\eps$ in operator norm, one can simply truncate the nonstandard form
of the operator in the wave atom frame with the same accuracy.


\subsection{Double layer potential} \label{sec:num_dl}

We now consider the double layer potential
\[
G_1(s,t) = \frac{i k}{4} H_1^{(1)}(k \| \mathbf{x}(s)-\mathbf{x}(t) \|) \, \frac{\mathbf{x}(s) -\mathbf{x}(t)}{\| \mathbf{x}(s) - \mathbf{x}(t) \|} \cdot n_{\mathbf{x}(t)} \, \|\dot{\mathbf{x}}(t)\|.
\]

For each fixed $k$, the discrete version of $G_1(s,t)$ is constructed
by sampling at $N=8 k$ points and using trapezoidal quadrature rule.
The coefficients $K^1_\mu := \< G_1, \vf_\mu \>$ are calculated using
the two dimensional wave atom transform and the approximant
$\tilde{K}^1_\mu$ is constructed in the same way as the single layer
potential case.

The results of the double layer potentials for the three examples are
summarized in Tables \ref{tbl:dlellp}, \ref{tbl:dlkite}, and
\ref{tbl:dlstar}, respectively. These results are qualitatively
similar to the ones of the singular layer potential. However, the
coefficients of the double layer potential exhibit better sparsity
pattern for the simple reason that the double layer potential operator
has a singularity much weaker than logarithmic along the diagonal (where
$s=t$) for objects with smooth boundary. Therefore, for a fixed
accuracy $\eps$, the number of wave atoms required along the diagonal
for the double layer potential is smaller than the number for the
singular layer potential. This is clearly shown in the sparsity
pattern plots in Tables \ref{tbl:dlellp}, \ref{tbl:dlkite}, and
\ref{tbl:dlstar}.

\begin{table}[h!]
  \begin{center}
    \includegraphics[height=2in]{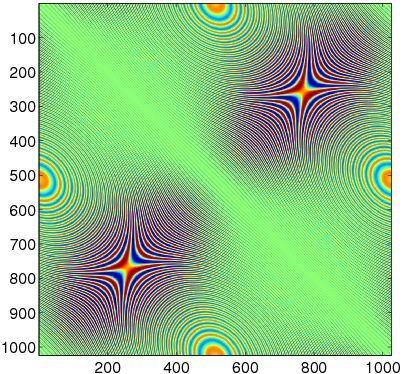}
    \includegraphics[height=2in]{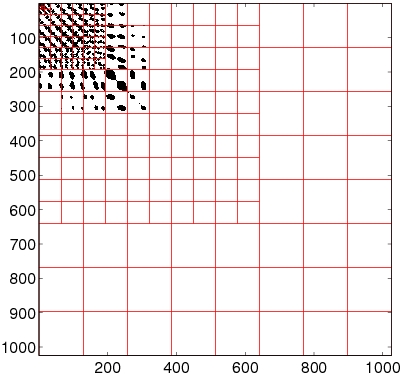}\\
    \vspace{0.1in}
    \begin{tabular}{|c|r|r|r|}
      \hline
      & $\eps=10^{-1}$ & $\eps=10^{-1.5}$ & $\eps=10^{-2}$ \\
      \hline
      $k=32  $ & 10 / 1.39e-2 / 1.29e-1 &     16 / 4.61e-3 / 3.65e-2 &     21 / 1.57e-3 / 1.19e-2       \\
      $k=64  $ & 9  / 1.40e-2 / 1.00e-1 &     15 / 4.41e-3 / 3.23e-2 &     21 / 1.34e-3 / 1.01e-2      \\
      $k=128 $ & 11 / 1.21e-2 / 9.50e-2 &     20 / 3.74e-3 / 3.00e-2 &     28 / 1.12e-3 / 9.94e-3      \\
      $k=256 $ & 10 / 1.28e-2 / 9.80e-2 &     18 / 3.70e-3 / 3.28e-2 &     28 / 1.07e-3 / 1.02e-2      \\
      $k=512 $ & 13 / 1.16e-2 / 1.00e-1 &     22 / 3.42e-3 / 3.24e-2 &     33 / 9.77e-4 / 1.00e-2      \\
      $k=1024$ & 11 / 1.21e-2 / 9.98e-2 &     20 / 3.52e-3 / 3.29e-2 &     30 / 9.83e-4  / 1.01e-2   \\
      \hline
    \end{tabular}
  \end{center}
  \caption{Double layer potential for the ellipse.
    Top left: the real part of the operator for $k=128$.
    Top right: the sparsity pattern of the operator under the wave atom basis 
    for $k=128$ and $\eps=10^{-2}$.
    Bottom: For different combinations of $k$ and $\eps$, $|\Delta_1|/N$,  $\delta$, and $\eps_{L^2}$.
  }
  \label{tbl:dlellp}
\end{table}

\begin{table}[h!]
  \begin{center}
    \includegraphics[height=2in]{dlkite1.jpg}
    \includegraphics[height=2in]{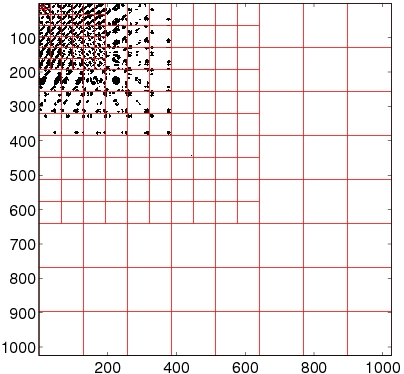}\\
    \vspace{0.1in}
    \begin{tabular}{|c|r|r|r|}
      \hline
      & $\eps=10^{-1}$ & $\eps=10^{-1.5}$ & $\eps=10^{-2}$ \\
      \hline
      $k=32  $& 12 / 1.64e-2 / 9.16e-2 &    19 / 5.16e-3 / 2.82e-2 &     27 / 1.64e-3 / 8.71e-3      \\
      $k=64  $& 13 / 1.40e-2 / 1.02e-1 &    23 / 4.37e-3 / 3.18e-2 &     34 / 1.31e-3 / 1.10e-2      \\
      $k=128 $& 16 / 1.27e-2 / 9.11e-2 &    29 / 3.82e-3 / 2.80e-2 &     43 / 1.18e-3 / 9.27e-3      \\
      $k=256 $& 16 / 1.22e-2 / 9.56e-2 &    31 / 3.46e-3 / 3.23e-2 &     49 / 1.02e-3 / 1.01e-2      \\
      $k=512 $& 18 / 1.20e-2 / 9.98e-2 &    33 / 3.42e-3 / 3.09e-2 &     52 / 9.71e-4 / 9.81e-3      \\
      $k=1024$& 18 / 1.17e-2  / 9.57e-2 &    35 / 3.26e-3 / 3.02e-2 &      55 / 9.07e-4 / 1.03e-2 \\
      \hline
    \end{tabular}
  \end{center}
  \caption{Double layer potential for the kite-shaped object.
    Top left: the real part of the operator for $k=128$.
    Top right: the sparsity pattern of the operator under the wave atom basis 
    for $k=128$ and $\eps=10^{-2}$.
    Bottom: For different combinations of $k$ and $\eps$, $|\Delta_1|/N$,  $\delta$, and $\eps_{L^2}$.
  }
  \label{tbl:dlkite}
\end{table}

\begin{table}[h!]
  \begin{center}
    \includegraphics[height=2in]{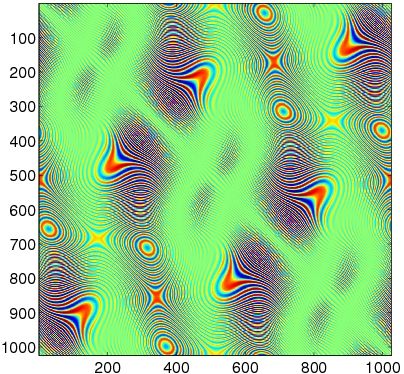}
    \includegraphics[height=2in]{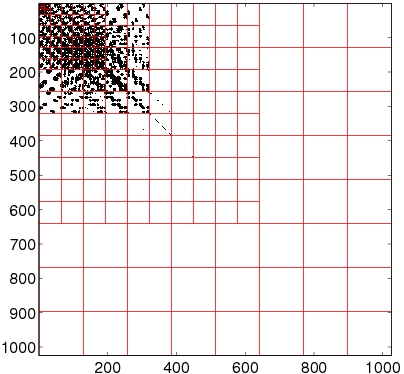}\\
    \vspace{0.1in}
    \begin{tabular}{|c|r|r|r|}
      \hline
      & $\eps=10^{-1}$ & $\eps=10^{-1.5}$ & $\eps=10^{-2}$ \\
      \hline
      $k=32  $ & 13 / 1.43e-2 / 1.20e-1 &     20 / 5.15e-3 / 3.31e-2  &    25 / 1.62e-3 / 1.15e-2 \\     
      $k=64  $ & 14 / 1.25e-2 / 9.20e-2 &     23 / 4.23e-3 / 2.80e-2  &    31 / 1.43e-3 / 9.03e-3   \\   
      $k=128 $ & 19 / 1.08e-2 / 1.08e-1 &     32 / 3.42e-3 / 3.27e-2  &    44 / 1.19e-3 / 1.10e-2     \\ 
      $k=256 $ & 20 / 1.00e-2 / 1.02e-1 &     36 / 3.08e-3 / 3.13e-2  &    52 / 9.77e-4 / 1.07e-2      \\
      $k=512 $ & 23 / 8.99e-3 / 1.01e-1 &     44 / 2.66e-3 / 3.24e-2  &    66 / 8.22e-4 / 1.01e-2      \\
      $k=1024$ & 22 / 9.06e-3 / 1.04e-1 &     43 / 2.54e-3 / 3.21e-2  &    69 / 7.46e-4 / 9.46e-3      \\
      \hline
    \end{tabular}
  \end{center}
  \caption{Double layer potential for the star-shaped object.
    Top left: the real part of the operator for $k=128$.
    Top right: the sparsity pattern of the operator under the wave atom basis 
    for $k=128$ and $\eps=10^{-2}$.
    Bottom: For different combinations of $k$ and $\eps$, $|\Delta_1|/N$,  $\delta$, and $\eps_{L^2}$.
  }
  \label{tbl:dlstar}
\end{table}








\appendix

\section{Additional proofs}

{\bf Proof of Lemma \ref{teo:hankel1}.}

Following Watson's treatise \cite{Wat}, the Hankel function can be expressed by complex contour integration as
\begin{equation}\label{eq:Hn-integral}
H_n^{(1)}(z) = \left( \frac{2}{\pi z} \right)^{1/2} \, \frac{\exp i (z-\frac{n \pi}{2} - \frac{\pi}{4})}{\Gamma(n-\frac{1}{2})} \, \int_0^{\infty e^{i \beta}} e^{-u} u^{n-1/2} \left( 1 + \frac{iu}{2 z} \right)^{n-1/2} \, du,
\end{equation}
where $-\pi/2 < \beta < \pi/2$. For us, $z$ is real and positive, and we take $\beta = 0$ for simplicity.

Let us first treat the case $m = 0$ (no differentiations) and $n > 0$. We can use the simple bound
\[
| 1 + \frac{iu}{2z} |^{n-\frac{1}{2}} \leq C_n \, \left( 1 + |\frac{u}{z}|^{n-\frac{1}{2}} \right)
\]
to see that the integral, in absolute value, is majorized by $C_n \, (1 + z^{-n+1/2})$. Hence the Hankel function itself is bounded by $C_n \, (z^{-1/2} + z^{-n})$. This establishes the first two expressions in (\ref{eq:nonosc}) in the case $m = 0$. 

The case $m = n=0$ is treated a little differently because the integrand in (\ref{eq:Hn-integral}) develops a $1/u$ singularity near the origin as $z \to 0$. We have
\begin{equation}\label{eq:maj}
| 1 + \frac{iu}{2z} |^{-\frac{1}{2}} = \left( 1 +  \left( \frac{u}{2z} \right)^2 \right)^{-1/4} \leq C \, \min \left(1, \left( \frac{u}{z} \right)^{-1/2} \right),
\end{equation}
hence the integral in (\ref{eq:Hn-integral}) is bounded in modulus by a constant times
\[
\int_0^z e^{-u} u^{-1/2} \, du + z^{1/2} \int_z^\infty e^{-u} u^{-1} \, du \leq C \, ( z^{1/2} + z^{1/2} |\log z|).
\]
With the $z^{-1/2}$ factor from (\ref{eq:Hn-integral}), the resulting bound is $C\, (1 + |\log z|)$ as desired (third equation). When $z > 1$, we can improve this to $C (1 + z^{1/2} e^{-z}) \leq C$, which gives the first equation when $m = n = 0$.

For the case $m > 0$, it suffices to apply Leibniz's rule inductively and observe that each derivative produces a factor $x^{-1}$ without changing the power of $k$. In particular,
\begin{itemize}
\item With $\alpha \ne 0$,
\[
\frac{d}{dx} \left[ \left(kx \right)^{-\alpha} \right] = -\alpha \left( kx \right)^{-\alpha} \frac{1}{x},
\]
hence the power of $k$ is preserved and one negative power of $x$ is created.
\item Derivatives acting on isolated negative powers of $x$ also produce a $x^{-1}$ factor without affecting the dependence on $k$.
\item As for the $x$-dependence under the integral sign, with $\alpha \ne 0$, we arrange the factors as
\[
\frac{d}{dx} \left( \left(1 + \frac{iu}{2kx} \right)^{-\alpha} \right) = \alpha \left[ \frac{i u}{2 kx}  \, \left(1 + \frac{iu}{2kx} \right)^{-1} \right] \, \left(1 + \frac{iu}{2kx} \right)^{-\alpha} \, \frac{1}{x}.
\]
The factor in square brackets is bounded by 1 in modulus, hence the dependence on $k$ is unchanged. The factor $1/x$ is the  only modification in the dependence on $x$. Subsequent differentiations will only act on factors that we have already treated above: powers of $kx$, powers of $x$, and powers of $1 + \frac{iu}{2kx}$. This finishes the proof.

\end{itemize}

\bigskip

{\bf Proof of Lemma \ref{teo:hankel2}.}

As previously, we use the integral formulation to get
\[
x H^{(1)}_1(x) = f(x) \, \int_0^{\infty} e^{-u} u^{1/2} \left( x + i \frac{u}{2} \right)^{1/2} \, du,
\]
where $f(x)$ is the exponential factor, and already obeys $|f^{(n)}(x)| \leq C$ for all $n \geq 0$. We denote the integral factor by $I(x)$; its derivatives are
\[
I^{(n)}(x) = C_n \int_0^{\infty} e^{-u} u^{1/2} \left( x + i \frac{u}{2} \right)^{\frac{1}{2} - n} \, du,
\]
where $C_n$ is a numerical constant. In a manner analogous to the proof of Lemma \ref{teo:hankel1}, we can bound
\[
| x + i \frac{u}{2} |^{\frac{1}{2}-n} = \left( x^2 + \frac{u^2}{4} \right)^{\frac{1}{4} - \frac{n}{2}} \leq C_n \, \left( \max(x,u) \right)^{\frac{1}{2} - n}.
\]
It follows that
\[
|I^{(n)}(x)| \leq C_n \left[ \int_0^x e^{-u} u^{1/2} x^{1/2-n} \, du + \int_x^\infty e^{-u} u^{1-n} \, du \right].
\]
In the first term we can use $e^{-u} \leq 1$ and bound the integral by a constant times $x^{2-n}$. The integrand of the second term has a singularity near $u = 0$ that becomes more severe as $n$ increases; this term is bounded by $O(1)$ if $n=0$ or $1$, by $O(1+|\log(x)|)$ if $n = 2$, and by $O(x^{2-n})$ if $n > 2$.

\bigskip

{\bf Proof of Lemma \ref{teo:hankel3}.}

Consider equation (\ref{eq:Hn-integral}) again, and take $z$ real. For large values of $z$, the factor $(1+iu/2z)^{n-1/2}$ is close to $1$; more precisely, it is easy to show that for each $n \geq 0$, there exists $c_n > 0, d_n > 0$ for which
\[
| \left( 1 + \frac{iu}{2z} \right)^{n-1/2} - 1 | \leq d_n \frac{u}{z}, \qquad \mbox{if} \qquad u \leq c_n z.
\]


We can insert this estimate in (\ref{eq:Hn-integral}) and split the integral into two parts to obtain
\begin{align*}
| H_n^{(1)}(z) | \, &\Gamma(n - 1/2) \left( \frac{\pi z}{2} \right)^{1/2} \, - \, | \int_0^{c_n z} e^{-u} u^{-1/2} du | \geq \\
&- d_n \int_0^{z c_n} e^{-u} u^{-1/2} \frac{u}{z} du - C \int_{z c_n}^{\infty} e^{-u} u^{-1/2} du.
\end{align*}
The constant $C$ in the last term comes from equation (\ref{eq:maj}). The first term in the right-hand side is a $O(z^{-1})$, and the second term is a $O(e^{-z})$. At the expense of possibly choosing increasing the value of $c_n$, the second term in the left-hand side can manifestly be made to dominate the contribution of the right hand side, proving the lemma.

\bigskip

{\bf Proof of Lemma \ref{teo:normalder}.}

We start by writing
\[
(\dot\x(t))^\bot \cdot r = (\dot\x(t))^\bot \cdot \frac{\x(t) + \sigma \dot\x(t) - \x(s)}{\| \x(t) - \x(s) \|}.
\]
By Taylor's theorem, $|\x(t) + \sigma \dot\x(t) - \x(s)| \leq C \sigma^2$, hence $|(\dot\x(t))^\bot \cdot r| \leq C \sigma$.  Derivatives are then treated by induction; recall that $\frac{d}{d\tau} = \frac{d}{ds} + \frac{d}{dt}$ and $\frac{d}{d\sigma} = \frac{d}{ds} - \frac{d}{dt}$;
\begin{itemize}
\item any number of $\tau$ or $\sigma$ derivatives acting on $(\dot\x(t))^\bot$ leave it a $O(1)$;
\item $\tau$ derivatives acting on $\x(t) + \sigma \dot\x(t) - \x(s)$ leave it a $O(\sigma^2)$ while each $\sigma$ derivative removes an order of $\sigma$;
\item $\tau$ derivatives acting on $\| \x(t) - \x(s) \|^{-m}$ leave it a $O(\sigma^{-m})$ ($m$ is generic) while each $\sigma$ derivative removes an order of $\sigma$.
\end{itemize}
This shows (\ref{eq:normaldertau}) and (\ref{eq:normaldersigma}).

\bigskip

{\bf Proof of Lemma \ref{teo:normalder2}.}

Some cancellations will need to be quantified in this proof, that were not a concern in the justification of previous coarser estimates like Lemma \ref{teo:normalder}.

Without loss of generality, assume that $\x(t) = (0,0)$, $n_{\x(t)} = (0,1)$, and that we have performed a change of variables such that the curve is parametrized as the graph $\x(s) = (s,f(s))$ of some function $f \in C^\infty$ obeying $|f(s)| \leq C s^2$. This latter change of variables would contribute a bounded multiplicative factor that would not compromise the overall estimate.

By symmetry, if (\ref{eq:dphids-refined}) is true for $d/ds$ derivatives, then it will be true for $d/dt$ derivatives as well. Without loss of generality let $s > 0$. Then we have
\[
\phi(s,t) = \| \x(s) - \x(t) \| = \sqrt{s^2 + f^2(s)} = s \sqrt{1 + \frac{f^2(s)}{s^2}}.
\]
Since $|f(s)| \leq C s^2$ and $C^\infty$, the ratio $f^2(s)/s^2$ is also bounded for $s \lesssim 1$, and of class $C^\infty$. Being a composition of $C^\infty$ functions, the whole factor $\sqrt{1+f^2/s^2}$ is therefore also of class $C^\infty$, which proves (\ref{eq:dphids-refined}).

As for (\ref{eq:dnrds-refined}) with $d/ds$ derivatives, we can write $(\dot\x(t))^\bot = (0,1)$ and
\[
(\dot\x(t))^\bot \cdot \frac{\x(t) - \x(s)}{\| \x(t) - \x(s) \|^2} = \frac{- f(s)}{s^2 + f^2(s)} = - \left( \frac{f(s)}{s^2} \right) \frac{1}{1 + \frac{f^2(s)}{s^2}}.
\]
(The $1/\| \dot\x(t) \|$ does not pose a problem since it is $C^\infty$.) Again, since $|f(s)| \leq C s^2$, both ratios $f(s)/s^2$ and $f^2(s)/s^2$ are themselves bounded and of class $C^\infty$. The factor $\frac{1}{1 + \frac{f^2(s)}{s^2}}$ is the composition of two $C^\infty$ functions, hence also of class $C^\infty$.

The symmetry argument is not entirely straightforward for justifying (\ref{eq:dnrds-refined}) with $d/dt$ derivatives. Symmetry $s \leftrightarrow t$ only allows to conclude that
\[
| \left( \frac{d}{dt} \right)^m  \left[ (\dot\x(s))^\bot \cdot \frac{\x(t) - \x(s)}{\| \x(t) - \x(s) \|^2} \right] | \leq C_m,
\]
where $(\dot\x(s))^\bot$ appears in place of the desired $(\dot\x(t))^\bot$. Hence it suffices to show that the $d/dt$, or equivalently the $d/ds$ derivatives of
\[
\left[ (\dot\x(t))^\bot - (\dot\x(s))^\bot \right] \cdot \frac{\x(t) - \x(s)}{\| \x(t) - \x(s) \|^2}
\]
stay bounded. Using our frame in which the curve is a graph, we find
\[
(\dot\x(t))^\bot - (\dot\x(s))^\bot = (0,1) - (-f'(s),1) = (f'(s),0)
\] 
As a result,
\[
\left[ (\dot\x(t))^\bot - (\dot\x(s))^\bot \right] \cdot \frac{\x(t) - \x(s)}{\| \x(t) - \x(s) \|^2} = \frac{-s f'(s)}{s^2 + f^2(s)} = - \left( \frac{f'(s)}{s} \right)  \frac{1}{1 + \frac{f^2(s)}{s^2}}.
\]
Since $|f'(s)| \leq s$, we are again in presence of a combination of $C^\infty$ functions that stays infinitely differentiable.

\end{document}